\numberwithin{equation}{section}
\newtheorem{theorem}[equation]{Theorem}
\newtheorem*{thm}{Theorem}
\newtheorem{proposition}[equation]{Proposition}
\newtheorem{lemma}[equation]{Lemma}
\newtheorem{corollary}[equation]{Corollary}
\newtheorem*{cor}{Corollaries}
\theoremstyle{definition}
\newtheorem{rmk}[equation]{Remark}
\newenvironment{remark}[1][]{\begin{rmk}[#1] \pushQED{\qed}}{\popQED \end{rmk}}
\newtheorem{rmks}[equation]{Remarks}
\newtheorem{eg}[equation]{Example}
\newenvironment{example}[1][]{\begin{eg}[#1] \pushQED{\qed}}{\popQED \end{eg}}
\newtheorem{defn}[equation]{Definition}
\newenvironment{definition}[1][]{\begin{defn}[#1]\pushQED{\qed}}{\popQED \end{defn}}
\newtheorem{ques}[equation]{Question}
\newtheorem{notn}[equation]{Notation}
\newcommand{\bb}{\mathbf{b}}
\newcommand{\bd}{\mathbf{d}}
\newcommand{\br}{\mathbf{r}}
\newcommand{\bw}{\mathbf{w}}
\newcommand{\zO}{\ensuremath{\Omega}}
\newcommand{\za}{\ensuremath{\alpha}}
\newcommand{\zb}{\ensuremath{\beta}}
\newcommand{\zd}{\ensuremath{\delta}}
\newcommand{\zg}{\ensuremath{\gamma}}
\newcommand{\II}{\mathbb{I}}
\renewcommand{\phi}{\varphi}
\renewcommand{\tilde}[1]{\widetilde{#1}}
\newcommand{\comment}[1]{}
\def\Ddots{\mathinner{\mkern1mu\raise\p@
\vbox{\kern7\p@\hbox{.}}\mkern2mu
\raise4\p@\hbox{.}\mkern2mu\raise7\p@\hbox{.}\mkern1mu}}
\DeclareMathOperator{\coker}{coker}
\newcommand{\Hom}{\operatorname{Hom}}
\DeclareMathOperator{\rank}{rank}
\DeclareMathOperator{\Spec}{Spec}
\DeclareMathOperator{\rep}{rep}
\DeclareMathOperator{\Mat}{Mat}
\newcommand{\GL}{\mathbf{GL}}
\newcommand{\cell}{X_\circ^\bw}
\newcommand{\mcell}{Y_\circ^{\bw}}
\newcommand{\tq}{\tilde{Q}}
\newcommand{\tv}{\tilde{V}}
\newcommand{\td}{\tilde{\bd}}
\newcommand{\bid}{\mathbf{1}}
\begin{document}

\title{Type $A$ quiver loci and Schubert varieties}

\author{Ryan Kinser}
\author{Jenna Rajchgot}
\address{Northeastern University, Department of Mathematics, Boston, MA, USA}
\email[Ryan Kinser]{r.kinser@neu.edu}
\address{University of Michigan, Department of Mathematics, Ann Arbor, MI, USA}
\email[Jenna Rajchgot]{rajchgot@umich.edu}
\thanks{The first author was supported by NSA Young Investigator Grant H98230-12-1-0244.}
\dedicatory{Dedicated to the memory of Andrei Zelevinsky}

\begin{abstract}
We describe a closed immersion from each representation space of a type $A$ quiver with bipartite (i.e., alternating) orientation to a certain opposite Schubert cell of a partial flag variety. This ``bipartite Zelevinsky map'' restricts to an isomorphism from each orbit closure to a Schubert variety intersected with the above-mentioned opposite Schubert cell. For type $A$ quivers of arbitrary orientation, we give the same result up to some factors of general linear groups.

These identifications allow us to recover results of Bobi\'nski and Zwara; namely we see that orbit closures of type $A$ quivers are normal, Cohen-Macaulay, and have rational singularities.  We also see that each representation space of a type $A$ quiver admits a Frobenius splitting for which all of its orbit closures are compatibly Frobenius split.
\end{abstract}

\maketitle

\setcounter{tocdepth}{1}
\tableofcontents

\section{Introduction}

\subsection{Context and History}\label{sect:history}
A quiver is a finite directed graph, and a representation of a quiver is a choice of vector space for each vertex and linear map for each arrow.  When the underlying graph is a type $A$ Dynkin diagram, we say the quiver is of type $A$. 
Once the vector spaces at each vertex are fixed, the collection of representations is an algebraic variety.  This ``representation space'' carries the action of a base change group.  A brief review of quiver representations is in Sections \ref{sect:quivers} through \ref{sect:repspaces}.

A type $A$ quiver with all arrows in the same direction is called equioriented, and the study of orbit closures (a.k.a. ``quiver loci'') for these quivers has a long and rich history.  Here, a representation space consists of all sequences of matrices $(M_1, \cdots, M_n)$ where $M_i$ determines a linear map from $K^{d_{i-1}}$ to $K^{d_i}$: 
\begin{equation}
K^{d_0} \xrightarrow{M_1} K^{d_1} \xrightarrow{M_2} \cdots  \xrightarrow{M_n} K^{d_n}.
\end{equation}
In this case, a quiver locus is described by imposing conditions on the ranks of all possible products of one or more of these matrices.

Specifying that each product of two consecutive matrices is zero determines a union of quiver loci known as a ``Buchsbaum-Eisenbud variety of complexes''. Varieties of complexes were studied extensively starting in the 1970s, and, when irreducible, they were shown to be normal, Cohen-Macaulay, and have rational singularities \cite{MR0396528, MR0384817, DCSvarietyofcomplexes, MR1684146, MR738917}.  Abeasis, del Fra, and Kraft extended these results to all equioriented type $A$ quiver loci in characteristic~0~\cite{MR626958}.

%Equioriented type $A$ quiver loci where each product of two consecutive matrices is $0$, called ``Buchsbaum-Eisenbud varieties of complexes'', were studied extensively starting in the 1970s, were shown to be normal, Cohen-Macaulay, and have rational singularities by various authors \cite{MR0396528, MR0384817, DCSvarietyofcomplexes, MR1684146, MR738917}.  Abeasis, del Fra, and Kraft extended these results to all equioriented type $A$ quiver loci in characteristic~0~\cite{MR626958}.

Soon after, a connection between equioriented type $A$ quiver loci and Schubert varieties in type $A$ flag varieties started to be uncovered: Musili and Seshadri noticed that Buchsbaum-Eisenbud varieties of complexes could be realized as open sets of unions of Schubert varieties. This allowed them to transport techniques such as standard monomial theory to study varieties of complexes \cite{MR717615}. For arbitrary rank conditions of equioriented type $A$ quivers, Zelevinsky gave an explicit set-theoretic identification of quiver loci with open subsets of Schubert varieties \cite{Zgradednilp}; this is now called the (equioriented) ``Zelevinsky map''. Lakshmibai and Magyar later showed that the Zelevinsky map is a scheme-theoretic isomorphism \cite{LMdegen}. Knutson, Miller, and Shimozono made great use of Zelevinsky's map to paint several beautiful combinatorial pictures of the torus equivariant cohomology of type $A$ quiver loci \cite{KMS}. Their paper is the main inspiration for our work.

For type $A$ quivers of arbitrary orientation,
Bobi\'nski and Zwara showed that orbit closures are normal and Cohen-Macaulay, with rational singularities \cite{BZ-typeA}, a result that is especially important for producing $K$-theoretic formulas for quiver loci \cite{MR1932326, MR2137947,MR2492443}. 
Bobi\'nski and Zwara's technique is to use Auslander-Reiten theory to construct ``Hom-controlled functors''  \cite{MR1888422}. These functors ensure that any singularity type appearing for an arbitrary orientation appears for a (typically larger) equioriented quiver, and thus also for a Schubert variety (by the Zelevinsky map). In fact, they later showed that the singularity types appearing in type $A$ quiver loci exactly coincide with singularity types of Schubert varieties in type $A$ flag varieties \cite{MR1967381}.  

We end by remarking that quiver loci for Dynkin quivers are important in the study of degeneracy locus formulas, a line of investigation initiated by Buch and Fulton \cite{BFchernclass} to generalize the classical Thom-Porteous formula.  
They are also important in Lie theory, where they lie at the foundation of Lusztig's geometric realization of Ringel's work on quantum groups \cite{MR1035415,Rhallalgebras}.

\subsection{Main results}\label{sect:mainresults}
In this paper we treat the bipartite orientation (i.e., every vertex is either a source or sink) as fundamental.  This is in contrast with previous approaches, which reduce problems for arbitrary orientations to the equioriented setting.

%\begin{figure}[!ht]
\[
\hbox{\begin{tikzpicture}[point/.style={shape=circle,fill=black,scale=.5pt,outer sep=3pt},>=latex]
   \node[outer sep=-2pt] (y0) at (-1,0) {$K^{d_0}$};
   \node[outer sep=-2pt] (x1) at (0,1) {$K^{d_1}$};
  \node[outer sep=-2pt] (y1) at (1,0) {$K^{d_2}$};
   \node[outer sep=-2pt] (x2) at (2,1) {$K^{d_3}$};
  \node[outer sep=-2pt] (y2) at (3,0) {$K^{d_4}$};
  \node (dots) at (3.75, 0.5) {$\cdots\cdots$};
%   \node[outer sep=-2pt] (x3) at (5,1) {$K^{d_0}$};
  \node[outer sep=-2pt] (y3) at (5,0) {$K^{d_{n-2}}$};
  \node[outer sep=-2pt] (x4) at (6,1) {$K^{d_{n-1}}$};
  \node[outer sep=-2pt] (y5) at (7,0) {$K^{d_n}$};
%   \node[draw, color=white,scale=.6pt,outer sep=2pt] (2) at (2,0) {};
%    \node[draw, color=white,label={below:$\binom{g}{2}$ labelings}]at (0,-1) {};
  \path[->]
  	(x1) edge node[left] {$M_1$} (y0) 
	(x1) edge node[left] {$M_2$} (y1)
  	(x2) edge node[left] {$M_3$} (y1) 
	(x2) edge node[left] {$M_4$} (y2)
%  	(x3) edge node[left] {$\za_3$} (y2) 
%	(x3) edge node[left] {$\zb_{n-1}$} (y3)
  	(x4) edge node[left] {$M_{n-1}$} (y3) 
  	(x4) edge node[right] {$M_n$} (y5); 
	
\node at (3,-.75) {A representation of a bipartite, type $A$ quiver, with dimension vector $\bd = (d_0, \dotsc, d_n)$};
   \end{tikzpicture}}
   \]
%   \caption*{A representation of a bipartite, type $A$ quiver, with dimension vector\\ $\bd = (d_0, \dotsc, d_n)$}
%\end{figure}

Our first main result is the construction of a Zelevinsky map in this setting.
The precise formulation is in Section \ref{sect:qrcToNW}; here we summarize its properties (see Theorem \ref{thm:mainThmSection3} for details).
\begin{thm}
Let $\rep_Q(\bd)$ be a space of representations of a bipartite quiver of type $A$, having fixed dimension vector $\bd$.  Then there exists an opposite Schubert cell $Y$ in a partial flag variety, and a closed immersion
\begin{equation}
\zeta \colon \rep_Q(\bd) \to Y
\end{equation}
which identifies each orbit closure in $\rep_Q(\bd)$ with a Schubert variety intersected with $Y$.
This identification can be realized combinatorially by the association of a ``bipartite Zelevinsky permutation'' to each orbit.
\end{thm}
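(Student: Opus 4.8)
The plan is to make Zelevinsky's strategy work ``from the bipartite side.'' First I would write down the map $\zeta$ explicitly. Fix the bipartite type $A$ quiver $Q$, with its sources and sinks, and a dimension vector $\bd$; a representation is a tuple of matrices $M_i$, one per arrow, each a linear map from the space at a source to the space at a sink. The idea is to assemble these matrices together with identity blocks into a single \emph{Zelevinsky matrix} $Z(M)$: a square matrix partitioned into blocks indexed by the vertices of $Q$, listed in the linear order of the $A_n$ diagram, where the blocks along a fixed ``staircase'' are identity matrices, the off-staircase blocks in the positions dictated by the arrows of $Q$ carry (signed) copies of the $M_i$, and all remaining blocks are zero. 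The column span of $Z(M)$ is a point of the partial flag variety $\Fl$ whose jump dimensions are the partial sums of $\bd$; because the staircase blocks are identities, this point lies in a single opposite Schubert cell $Y \subseteq \Fl$, and this $Y$ is the one named in the theorem. That $\zeta$ is a closed immersion is then formal: in the affine coordinates on $Y$ coming from its cell structure, the entries of $\zeta(M)$ are either $0$, $\pm 1$, or an entry of some $M_i$, so $\zeta$ identifies $\rep_Q(\bd)$ with a coordinate affine subspace of $Y$, which is closed.

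Next I would recall Fulton's description: for any Schubert variety $X_v \subseteq \Fl$, the intersection $X_v \cap Y$ is cut out inside the affine space $Y$ by rank conditions on the northwest submatrices of a matrix representing the flag, namely $\rank(\text{northwest } p \times q \text{ corner}) \le r_v(p,q)$ for the rank function $r_v$ of $v$. So it suffices to (i) compute the northwest ranks of $Z(M)$ in terms of $M$, and (ii) organize the orbit-defining invariants of $M$ so that they match. For type $A$ quivers the orbit of $M$ is determined by the ranks of all ``interval maps'' obtained by composing the arrow maps and identity maps along subpaths of the $A_n$ diagram (equivalently, by the multiplicities of the indecomposable summands), and orbit closure containment is governed by these rank inequalities. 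The block structure of $Z(M)$ is designed precisely so that the rank of each northwest corner equals an explicit ``trivial'' part --- counting $1$'s in identity blocks, a fixed function of $\bd$ --- plus the rank of one such interval map. From this dictionary I would define the \emph{bipartite Zelevinsky permutation} $v(r)$ of an orbit with rank array $r$: its rank function is declared to be the trivial part plus $r$, and one checks that this is a bona fide permutation rank function (the required monotonicity and corner conditions following from the obvious inequalities among interval-map ranks --- submultiplicativity along composition, and boundedness by the dimensions at the endpoints) and that $r \mapsto v(r)$ is a bijection from the finite set of achievable rank arrays onto a distinguished set of permutations.

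Granting this, the set-theoretic identity $\zeta(\ol{\cO_r}) = X_{v(r)} \cap Y$ is immediate: $M \in \ol{\cO_r}$ iff every interval-map rank of $M$ is $\le$ the corresponding entry of $r$, iff every northwest rank of $Z(M)$ is $\le r_{v(r)}$, iff $\zeta(M) \in X_{v(r)}$. To promote this to the isomorphism claimed, I would note that both sides are reduced --- $\ol{\cO_r}$ is a variety and $X_{v(r)} \cap Y$ is a Schubert cell intersection --- and that $\zeta$ is a closed immersion, so it restricts to a closed immersion $\ol{\cO_r} \hookrightarrow X_{v(r)} \cap Y$ that is a bijection onto a reduced target, hence an isomorphism; alternatively, one compares the two directly as cut out by the \emph{same} ideal of minors pulled back along the coordinate embedding $\zeta$, using that the ideal of a matrix Schubert variety is generated by the minors indexed by its essential set, and that, by construction, the essential boxes of $v(r)$ lie off the identity blocks. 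This $v(r)$ is also the combinatorial gadget referred to in the last sentence of the theorem.

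The step I expect to be the real work is the combinatorial heart: pinning down the block positions and signs in $Z(M)$ so that (a) the image genuinely lands in a single opposite Schubert cell rather than a union of cells, and (b) the northwest-rank computation recovers \emph{exactly} the quiver rank invariants with no spurious extra conditions --- in particular, verifying that the essential set of $v(r)$ is supported away from the identity blocks, which is what makes the Schubert conditions and the quiver conditions define the same scheme and not merely the same set. Getting the indexing conventions right for the interleaving of source-blocks and sink-blocks along the $A_n$ diagram --- where the equioriented case has only ``one-sided'' maps but in the bipartite case each source feeds two sinks --- is the bookkeeping obstacle on which the rest of the argument rests.
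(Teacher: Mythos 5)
Your overall route is the paper's own: the same snake-plus-identity-blocks matrix defines $\zeta$ into the opposite cell indexed by $\bw$ (no signs are needed, incidentally), orbit invariants are matched with northwest block ranks, a Zelevinsky permutation is built from that block rank data, and Fulton's essential-set theorem, with essential boxes at southeast corners of blocks, upgrades the matching to an equality of ideals. But two steps you treat as available inputs carry real content. First, you assert that the orbit of $M$ is determined by ranks of ``interval maps obtained by composing the arrow maps,'' and that orbit closure containment is governed by those rank inequalities. In the bipartite orientation there are no compositions to take: the correct invariants are the ranks of the block zig-zag matrices $M_J$ assembled from consecutive arrow matrices, and proving that these determine the orbit is exactly the content of Proposition \ref{prop:rankorbit} (minimal projective presentations of the indecomposables plus Auslander's theorem, identifying $r_J(V)$ with $\dim\Hom_Q(\II_{f(J)},V)$ up to a constant). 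Likewise, the statement that $\ol{\cO}_\br$ is set-theoretically the locus where all $r_J \le \br_J$ is not a triviality: either you import Bongartz--Zwara degeneration theory (uncited in your sketch), or, as the paper does, you derive it \emph{as a consequence} -- the rank scheme $\zO_\br$ is shown reduced and irreducible because it is a Kazhdan--Lusztig variety, and only then does one conclude it equals the orbit closure (Theorem \ref{thm:mainThmSection3}). As written, you use part of the conclusion as an input.

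Second, your ``immediate'' chain of equivalences only proves $\zeta^{-1}(X_{v(\br)}\cap Y)=\ol{\cO}_\br$, i.e. $\zeta(\ol{\cO}_\br)=\zeta(\rep_Q(\bd))\cap X_{v(\br)}$. To get $\zeta(\ol{\cO}_\br)=X_{v(\br)}\cap Y$, which is what your ``bijective closed immersion onto a reduced target'' step requires, you must also show that every point of the cell satisfying the permutation's northwest rank conditions already lies in the image of $\zeta$, i.e. that those conditions force the zero blocks above and below the snake. This is exactly what the paper's image conditions (Lemma \ref{lem:NWSE}) build into the block rank matrix and hence into $v(\br)$; it is not the same issue as the essential set avoiding the identity blocks. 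If you define the rank function of $v(\br)$ at every block corner to be ``trivial part plus $\br$'' and then prove the converse dictionary (Proposition \ref{prop:combDataEquiv}), this gap closes -- but that verification, together with the Section 3 dictionary above, is the substance of the theorem rather than bookkeeping.
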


Relating the geometry of quiver loci of arbitrarily oriented type $A$ quivers to those in the bipartite case is quite natural; this is our second main result.  The simplicity of our approach is in stark contrast to trying to reduce the geometry of arbitrary orientations to the equioriented case, which seems to be quite difficult.  We paraphrase our result here, with the detailed statement found in Theorem \ref{thm:arbitraryorient}.

\begin{thm}
Let $\rep_Q(\bd)$ be a representation space of an arbitrarily oriented quiver of type $A$.  Then there exists an open subset $U$ of a representation space of a certain bipartite quiver, along with a
smooth, $\GL(\bd)$-equivariant projection $\pi\colon U \to \rep_Q(\bd)$ that induces a containment preserving bijection on quiver loci.
\end{thm}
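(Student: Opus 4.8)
The plan is to reduce the arbitrary-orientation case to the bipartite case by an explicit combinatorial construction: from the type $A$ quiver $Q$ we build a bipartite type $A$ quiver $\tilde Q$ on more vertices, together with a dimension vector $\td$, such that $\rep_Q(\bd)$ appears (up to the claimed $\GL$ factors) as a fiber of a smooth projection out of $\rep_{\tilde Q}(\td)$. The idea is that every arrow of $Q$ pointing ``the wrong way'' can be corrected by inserting a new vertex and replacing that arrow by a pair of arrows, one of which carries an invertible linear map. Concretely, whenever $Q$ has a path $\bullet \xrightarrow{a} \bullet \xleftarrow{b} \bullet$ that we wish to rectify, or more generally wherever two consecutive arrows fail to alternate, we insert a vertex in the middle and set the new dimension equal to the dimension at the offending endpoint, so that the freshly created arrow is generically an isomorphism. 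Carrying this out vertex-by-vertex along the $A_{n}$ diagram produces a bipartite quiver $\tilde Q$, a dimension vector $\td$, and a distinguished open locus $U \subseteq \rep_{\tilde Q}(\td)$ consisting of those representations for which each inserted arrow is invertible.

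First I would make precise the construction of $\tilde Q$ and $\td$ from $(Q,\bd)$: label the vertices of $Q$ as $0,1,\dots,n$ and, scanning the arrows left to right, record at each non-alternating junction the data needed to insert a new vertex; I expect $\tilde Q$ to have one new vertex for each such junction, with dimension equal to the dimension of the vertex of $Q$ that is being ``pushed.'' Second, I would define $U \subseteq \rep_{\tilde Q}(\td)$ as the open subset where all inserted arrows are invertible linear maps; openness is clear since invertibility of a square matrix is the non-vanishing of a determinant. Third, I would define the map $\pi \colon U \to \rep_Q(\bd)$ by composing, at each inserted vertex, the pair of new arrows (inverting the one that is an isomorphism) to recover a linear map in the original direction of the corresponding arrow of $Q$; one checks this lands in $\rep_Q(\bd)$ and is a morphism of varieties. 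Fourth, I would verify $\GL(\bd)$-equivariance: the group $\GL(\td)$ surjects onto $\GL(\bd)$ (the extra factors being the general linear groups at the inserted vertices), and the formula for $\pi$ is manifestly compatible with these actions, with the extra $\GL$ factors acting by conjugation on the invertible inserted maps and hence not affecting the image. Fifth, smoothness of $\pi$: the fiber of $\pi$ over a point is a torsor under a product of the extra $\GL$ factors times an affine space coming from the free choice of the inserted isomorphisms and the ``other'' new arrows, so $\pi$ is a composition of a vector bundle projection with a $\GL$-bundle, hence smooth with irreducible fibers of constant dimension. Finally, for the bijection on quiver loci: since $\pi$ is smooth with irreducible fibers, the assignment $\cO \mapsto \ol{\pi^{-1}(\cO)}$ is an inclusion-preserving bijection between $\GL(\bd)$-orbit closures in $\rep_Q(\bd)$ and $\GL(\td)$-orbit closures in $\rep_{\tilde Q}(\td)$ that meet $U$; I would identify these latter closures, via the Auslander–Reiten combinatorics or a direct rank-condition argument, with the quiver loci of $\tilde Q$ restricted to $U$.

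The main obstacle I anticipate is twofold and essentially bookkeeping-heavy rather than conceptual. First, one must get the combinatorics of the insertion exactly right so that $\tilde Q$ is genuinely bipartite and so that $U$ is nonempty and really does surject onto $\rep_Q(\bd)$ — in particular, the inserted dimensions must be chosen so that the relevant maps \emph{can} be invertible, which constrains $\td$. Second, and more delicate, is the orbit-correspondence step: one needs that passing to the open set $U$ does not destroy or merge orbit closures, i.e., that every orbit closure of $\tilde Q$ relevant to $Q$ actually meets $U$, and that distinct orbits of $Q$ pull back to distinct orbit closures. This should follow from the smoothness of $\pi$ together with a dimension count, using that a smooth surjective morphism with irreducible fibers induces a bijection on irreducible closed $G$-stable subsets meeting the open locus; but verifying that the image of each $\tilde Q$-orbit closure under $\pi$ is exactly the expected $Q$-orbit closure requires knowing that general representations in the $\tilde Q$-orbit lie in $U$, which I would justify by an explicit construction of a representative of each indecomposable of $Q$ as $\pi$ of an indecomposable-type representation of $\tilde Q$ lying in $U$.
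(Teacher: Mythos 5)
Your proposal takes essentially the same route as the paper: insert an intermediate source or sink (with the same dimension) at each non-alternating junction to form the bipartite quiver $\tq$, take the open set $U$ where the inserted arrows are invertible, define $\pi$ by composing with the inverse of the inserted map, and deduce the orbit(-closure) bijection from equivariance and the bundle structure. One small correction: the fiber of $\pi$ is exactly a torsor under the product $G^*$ of general linear groups at the inserted vertices, with no additional affine factor, since fixing the image and the invertible inserted map determines the other new arrow; this transitivity of $G^*$ on fibers is precisely what makes the bijection on orbits, and hence on their closures, immediate.
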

 
From these two theorems, we recover one of Bobi\'nski and Zwara's results, and find a nice Frobenius splitting of each representation space of a type $A$ quiver.
\begin{cor}  Let $Q$ be a type $A$ quiver of arbitrary orientation, and let $\bd$ be a dimension vector for $Q$.  Then the following hold.
\begin{enumerate}
\item \cite[Thm. 1.1]{BZ-typeA} All orbit closures in $\rep_Q(\bd)$ are normal and Cohen-Macaulay. When working over a field of characteristic 0, they also have rational singularities.
\item Over a perfect field of positive characteristic, there exists a Frobenius splitting of $\rep_Q(\bd)$ that simultaneously compatibly splits all orbit closures.
\end{enumerate}
\end{cor}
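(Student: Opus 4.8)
The plan is to transport known geometric properties of Schubert varieties along the maps of Theorems~\ref{thm:mainThmSection3} and~\ref{thm:arbitraryorient}, treating the bipartite case first and then reducing the general case to it.

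Suppose first that $Q$ is bipartite. By the bipartite Zelevinsky map, $\zeta$ realizes $\rep_Q(\bd)$ as a closed subvariety of an opposite Schubert cell $Y$ in a partial flag variety $G/P$, carrying each orbit closure isomorphically onto $X_v\cap Y$ for a Schubert variety $X_v$ ($v$ the bipartite Zelevinsky permutation of the orbit); in particular $\rep_Q(\bd)$ itself is carried onto $X_0\cap Y$, where $X_0$ is the Schubert variety attached to the dense orbit. For part~(1) I would quote the classical facts that Schubert varieties in $G/P$ are normal and Cohen--Macaulay in every characteristic and have rational singularities in characteristic zero (Seshadri; Ramanan--Ramanathan; Ramanathan; Mehta--van der Kallen). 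Each of these is a local property, so it is inherited by the open subvariety $X_v\cap Y\subseteq X_v$ and then carried to the orbit closure by the isomorphism $\zeta$. For part~(2) I would start from the canonical Frobenius splitting of $G/P$, which simultaneously compatibly splits all Schubert varieties, all opposite Schubert varieties, and all their intersections (Mehta--Ramanathan; Ramanan--Ramanathan). Restricting this splitting first to the opposite Schubert variety $\ol{Y}$, then to its open subset $Y$, then to the compatibly split closed subvariety $\zeta(\rep_Q(\bd))=X_0\cap Y$, and finally transporting along $\zeta^{-1}$, produces a Frobenius splitting of $\rep_Q(\bd)$; and since every orbit closure occurs inside $\zeta(\rep_Q(\bd))$ as (the image of) a compatibly split subvariety of $Y$, it is compatibly split by this splitting.

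Now let $Q$ have arbitrary orientation. By Theorem~\ref{thm:arbitraryorient} there are a bipartite quiver $\tilde{Q}$, a dimension vector $\tilde{\bd}$, an open subset $U$ of $\rep_{\tilde{Q}}(\tilde{\bd})$, and a smooth, surjective, $\GL(\bd)$-equivariant projection $\pi\colon U\to\rep_Q(\bd)$ inducing a containment-preserving bijection on quiver loci; I will also use that $\pi$ admits a section $s$, which one reads off the explicit ``unfolding'' construction. For part~(1): given an orbit closure $Z\subseteq\rep_Q(\bd)$, its preimage $\pi^{-1}(Z)$ is a quiver locus in $U$, hence the intersection with $U$ of a bipartite orbit closure $Z'$; by the bipartite case $Z'$ is normal, Cohen--Macaulay, and (in characteristic zero) has rational singularities, and these properties pass to the open subvariety $\pi^{-1}(Z)=Z'\cap U$. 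Since $\pi$ restricts to a smooth surjection $\pi^{-1}(Z)\to Z$, and normality, Cohen--Macaulayness and rational singularities all descend along smooth surjective morphisms (by standard flatness and resolution arguments), $Z$ inherits them. For part~(2): applying the bipartite case to $\tilde{Q}$ gives a Frobenius splitting of $\rep_{\tilde{Q}}(\tilde{\bd})$ compatibly splitting all of its orbit closures; restricting to $U$ gives a Frobenius splitting $\phi$ of $U$ compatibly splitting each $\pi^{-1}(Z)$. Because $\pi$ has a section, $\rep_Q(\bd)$ is a retract of $U$, so $\psi:=s^{\#}\circ\phi\circ\pi^{\#}$ is a Frobenius splitting of $\rep_Q(\bd)$; and since $\pi^{-1}(Z)$ is cut out by the ideal of $Z$ extended along $\pi$, a short computation shows that $\psi$ preserves the ideal of each $Z$, i.e.\ compatibly splits every orbit closure.

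Most of these ingredients---locality and open-restriction stability of normality, Cohen--Macaulayness and rational singularities, their descent along smooth morphisms, the basic stability properties of compatibly split subvarieties, and the retract argument for Frobenius splittings---are routine and would be handled by citation. I expect the real work to lie in feeding the two main theorems in correctly: in the bipartite step, checking that $\zeta(\rep_Q(\bd))$ and all of the orbit closures occur among the subvarieties compatibly split by the canonical splitting of $G/P$, so that the splitting can be pushed all the way down to $\rep_Q(\bd)$ (this should follow from the combinatorics of bipartite Zelevinsky permutations); and in the general step, checking that $\pi^{-1}(Z)$ is again a quiver locus and that $\pi$ has a section, both of which are built into the construction behind Theorem~\ref{thm:arbitraryorient}. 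The single most delicate point will be the restriction of the splitting to the \emph{locally closed} cell $Y$: it forces one to use a canonical splitting of $G/P$ for which the opposite Schubert variety $\ol{Y}$ is itself compatibly split---precisely what the simultaneous splitting of Schubert and opposite Schubert varieties provides---and only then restrict to the open locus $Y$ and to $\zeta(\rep_Q(\bd))$.
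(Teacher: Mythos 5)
Your architecture is the one the paper uses: bipartite case via the Zelevinsky identification of orbit closures with Kazhdan--Lusztig varieties, then transfer to arbitrary orientation through $\pi$, using that $\overline{\pi^{-1}(\mathcal{O})}\simeq G^*\times\overline{\mathcal{O}}$; your ``retract'' splitting $s^{\#}\circ\phi\circ\pi^{\#}$ is literally the paper's composition (include $K[\rep_Q(\bd)]$ by $r\mapsto r\otimes 1$, apply the splitting of $K[\rep_Q(\bd)]\otimes_K K[G^*]$, then evaluate the $G^*$-variable at a fixed point $g_0$), and your ``delicate point'' in part (2) --- restrict the canonical splitting of $P\backslash G$ first to the compatibly split opposite Schubert variety $X^{\bw}$, then to its open cell $\cell$, then to the compatibly split Kazhdan--Lusztig varieties --- is exactly the paper's route.

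There is, however, one step in your bipartite part (1) that is wrong as stated: you transfer normality, Cohen--Macaulayness and rational singularities from $X_v$ to $X_v\cap Y$ by calling the latter ``the open subvariety $X_v\cap Y\subseteq X_v$.'' The cell $Y=\cell$ is open in the opposite Schubert variety $X^{\bw}$, but not in $P\backslash G$: its dimension is $l(\bw)=d_xd_y$, which is in general strictly smaller than $\dim P\backslash G$. Consequently $X_v\cap\cell$ is a locally closed subvariety of $X_v$ of positive codimension (its dimension is $l(\bw)-l(v)$, not $\dim X_v$), and openness cannot be used to inherit local properties. The correct statement, which is what the paper cites (Brion), is that a Kazhdan--Lusztig variety inherits the singularity type of the Schubert variety via the local product structure: a suitable neighborhood in $X_v$ of a point of the cell is isomorphic to $(X_v\cap\cell)\times(\text{an affine space})$, and normality, Cohen--Macaulayness and rational singularities are unaffected by the smooth factor. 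With that replacement your part (1) goes through. The remaining deviations are harmless: for rational singularities in the arbitrary-orientation step you invoke descent along smooth surjective morphisms, where the paper instead exploits the product structure $\pi^{-1}(Z)\simeq Z\times G^*$ and runs an elementary K\"unneth computation on a resolution; either works, but the product argument needs less machinery.
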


As mentioned above, Bobi\'nski and Zwara obtain their results via powerful representation-theoretic technology.  Their approach allows the expert in finite-dimensional algebras to quickly see that singularity types are independent of orientation for type $A$ and $D$ quivers, using only representation theory.
In contrast, our methods are specific to type $A$ quivers, but more explicit in realizing the entire quiver locus as a variety which is already well understood.  In particular, our method is naturally suited to the use of combinatorics in describing non-local properties of quiver loci such as their torus-equivariant cohomology classes and orbit closure containment (i.e., degeneration order).  
%For example, Abeasis and del Fra described containments among orbit closures using rank conditions and diagrams of marked intervals in the quiver \cite{MR626958} \ryan{massively complicated as opposed to bruhat}
%don't mention this unless we actually prove something about it. :)

\begin{remark}\label{rmk:access}
This paper is meant to be self-contained and accessible to both combinatorial algebraic geometers and representation theorists.  For this reason, some background and proofs have been included that may seem overly detailed or trivial to experts in one field or the other.
\end{remark}

\subsection*{Acknowledgements}
We especially thank Allen Knutson for showing us that our original statement of our first main result in terms of positroid varieties could instead be formulated in terms of Schubert varieties, and we also thank him for looking at a first version of this paper. We thank Karen E. Smith for explaining how to extend our Frobenius splitting statements in the bipartite type $A$ setting to the general type $A$ setting, and we thank Shrawan Kumar and V. Lakshmibai for enlightening conversations. This work originated when the authors met in August 2012 at the joint introductory workshop on cluster algebras and commutative algebra at MSRI in Berkeley, CA.  We also thank Daniel Hern\'andez for participation in early discussions there.
%%%%%%%%%%%%%%%%%%%%%%%%%%%%%%%%%%%%%%%%%%%%%%%%%%
%%%%%%%%%%%%%%%%%%%%%%%%%%%%%%%%%%%%%%%%%%%%%%%%%%
\section{Background}\label{sect:background}
Let $K$ be a field which will remain fixed throughout the paper, thus usually omitted from the notation.  %\todo{double check this: Lakshmibai Magyar require it to be infinite for some reason}
%{\color{blue}We should specify that $K$ is perfect if we talk about $F$-splitting later.}

\subsection{Quiver representations}\label{sect:quivers}
A \textbf{quiver} $Q$ is a finite directed graph. The set of vertices of $Q$ is denoted by $Q_0$, and the set of arrows is denoted by $Q_1$.  The vertex at the tail (starting point) of an arrow $a\in Q_1$ is denoted by $ta$, and the vertex at the head (ending point) is denoted by $ha$. 

A \textbf{representation} $V$ of a quiver $Q$ is an assignment of a finite-dimensional vector space $V_z$ to each vertex $z\in Q_0$, and a linear map $V_a:V_{ta}\rightarrow V_{ha}$ to each arrow $a\in Q_1$. There is a natural notion of morphism between two representations of the same quiver.  The collection of all representations of a fixed quiver $Q$ is equivalent to the category of finite-dimensional modules over the ``path algebra'' of $Q$, so all the standard operations on modules make sense for quiver representations.  See the text \cite{assemetal} for basics of quiver representations.  

%  Given $V \in \rep(Q)$, we write $V_z$ for the vector space which $V$ has at vertex $z$, and $V_a$ for the map over an arrow $a$.

Let $Q$ be a quiver of type $A$ with bipartite orientation, so that every vertex is either a source or a sink.  We can always assume that $Q$ has an odd number of vertices.\footnote{To cover quivers with an even number vertices, say $2n$, we simply work with representations of the $Q$ in \eqref{eq:typea} which have dimension zero at vertex $y_n$. This convention allows us to simplify the presentation.  Maps to or from zero-dimensional vector spaces are represented by matrices with 0 rows or columns, resp.}  So, by dualizing if necessary, we can assume that each endpoint has an incoming arrow and we label the vertices and arrows of $Q$ as follows:
\begin{equation}\label{eq:typea}
\vcenter{\hbox{\begin{tikzpicture}[point/.style={shape=circle,fill=black,scale=.5pt,outer sep=3pt},>=latex]
   \node[outer sep=-2pt] (y0) at (-1,0) {$y_0$};
   \node[outer sep=-2pt] (x1) at (0,1) {$x_1$};
  \node[outer sep=-2pt] (y1) at (1,0) {$y_1$};
   \node[outer sep=-2pt] (x2) at (2,1) {$x_2$};
  \node[outer sep=-2pt] (y2) at (3,0) {$y_2$};
  \node (dots) at (3.75, 0.5) {$\cdots\cdots$};
   \node[outer sep=-2pt] (x3) at (5,1) {$x_{n-1}$};
  \node[outer sep=-2pt] (y3) at (6,0) {$y_{n-1}$};
  \node[outer sep=-2pt] (x4) at (7,1) {$x_n$};
  \node[outer sep=-2pt] (y5) at (8,0) {$y_n$};
%   \node[draw, color=white,scale=.6pt,outer sep=2pt] (2) at (2,0) {};
%    \node[draw, color=white,label={below:$\binom{g}{2}$ labelings}]at (0,-1) {};
  \path[->]
  	(x1) edge node[left] {$\za_1$} (y0) 
	(x1) edge node[left] {$\zb_1$} (y1)
  	(x2) edge node[left] {$\za_2$} (y1) 
	(x2) edge node[left] {$\zb_2$} (y2)
%  	(x3) edge node[left] {$\za_3$} (y2) 
	(x3) edge node[left] {$\zb_{n-1}$} (y3)
  	(x4) edge node[left] {$\za_n$} (y3) 
  	(x4) edge node[left] {$\zb_n$} (y5); 
   \end{tikzpicture}}} .
\end{equation}

%Linearly order the vertex set $Q_0$ from left to right by
%\begin{equation}
%y_0 < x_1 < y_1< x_2 < y_2 < \cdots .
%\end{equation}
We write $J \subseteq Q$ to denote that $J$ is an \textbf{interval} in $Q$, that is, a connected subquiver of $Q$.  Then the indecomposable representations of $Q$ are in bijection with intervals in $Q$.  Explicitly, let $\II_J$ be the representation defined at each vertex $z$ by
\begin{equation}
(\II_J)_z = 
\begin{cases}
K & z \in J\\
0 & \text{otherwise}\\
\end{cases},
\end{equation}
with the identity map for each arrow in $J$, and the other maps zero.  By performing Gaussian elimination alternately on rows and columns down the quiver, one can see that these constitute a complete set of isomorphism classes of indecomposable representations of $Q$ \cite{gabriel}.  The Krull-Schmidt property of $\rep(Q)$ implies that each $V \in \rep(Q)$ has an essentially unique expression as 
\begin{equation}\label{eqn:KS}
V \simeq \bigoplus_{J \subseteq Q} s_J\, \II_J, \quad s_J \in \mathbb{Z}_{\geq 0}
\end{equation}
where $s_{J}$ is the multiplicity of the summand $\II_J$ in $V$.

\subsection{Projective representations}\label{sect:proj}
%\ryan{does this help to give an idea why we are reviewing projectives? also i added a precise reference}
To study quiver representations from a geometric point of view, we want to encode the data of the Krull-Schmidt decomposition \eqref{eqn:KS} into rank conditions on certain matrices.
To do this, we will need to replace certain representations with projective presentations, thus encoding the isomorphism class of a representation into a single matrix.
%Remarkably, $\rep(Q)$ has global dimension 1 for any $Q$, so every projective resolution has only two terms.  This means that the isomorphism class of any module can be encoded in a single matrix representing a map between projectives.
We review here the basic facts on projective representations of quivers (see \cite[\S III.2]{assemetal} for more detail).
%\todo{Maybe expand this section a bit more to make it slightly more accessable to the combinatorist or algebraic geometer without background in quiver theory?}

The indecomposable projective representations of any quiver are in bijection with its vertices, and we write $P(z)$ for the projective associated to a vertex $z$.  Concretely, the vector space which $P(z)$ associates to a vertex $v$ has the set of paths from $z$ to $v$ as its basis.  The map over an arrow $a$ takes a path and concatenates the arrow $a$ to it.
Given two vertices $v, w$, the set of paths from $w$ to $v$ can be naturally identified with a basis of the vector space $\Hom_Q(P(v), P(w))$.  Therefore, a map between arbitrary projectives is given by a matrix whose entries are linear combinations of paths in $Q$.

The projective $P(z)$ represents the functor ``restrict to the vertex $z$''.  That is, there is a functorial isomorphism $\Hom_Q(P(z), V) \cong V_z$.  Furthermore, when $\Hom_Q(?, V)$ is applied to the morphism $P(z) \xrightarrow{a} P(w)$ associated to an arrow $w \xrightarrow{a} z$ in $Q$, we get the following commutative diagram of vector spaces.
\begin{equation}
\vcenter{\hbox{\begin{tikzpicture}
\node (ul) at (0,2) {$\Hom_Q (P(w), V)$};
\node (ur) at (5,2) {$\Hom_Q (P(z), V)$};
\node (ll) at (0,0) {$V_w$};
\node (lr) at (5,0) {$V_z$};
\path[->]
(ul) edge node[above] {$\Hom_Q (a, V)$} (ur)
(ll) edge node[above] {$V_a$} (lr)
(ul) edge node[left] {$\cong$} (ll)
(ur) edge node[right] {$\cong$} (lr);
\end{tikzpicture}}}
\end{equation}
An arbitrary map between projectives $\Phi\colon P^1 \to P^0$ is given by a matrix whose entries are linear combinations of paths. Applying $\Hom_Q (?, V)$ replaces each entry of the matrix of $\Phi$ with the corresponding linear combination of maps appearing over arrows of $Q$ in $V$.

\subsection{Representation spaces of a quiver}\label{sect:repspaces}
%The representations of a quiver $Q$ which have a fixed vector space with ordered basis at each vertex are parametrized by a space of matrices, since these determine the linear maps attached to the arrows.
%  Indeed, because vector spaces have been assigned to the vertices, representations are completely determined by the linear maps assigned to the arrows, and because bases of each vector space have been fixed, each linear map has a unique matrix representation.
A \textbf{dimension vector} $\bd \colon Q_0 \to \mathbb{Z}_{\geq 0}$ for a quiver $Q$ is an assignment of a nonnegative integer to each vertex of $Q$.  For a fixed $\bd$, define the associated \textbf{representation space} to be
\begin{equation}
\rep_Q(\bd) := \prod_{a \in Q_1} \Mat_{\bd(ha), \bd(ta)}(K),
\end{equation}
where $\Mat_{m, n}(K)$ denotes the algebraic variety of matrices with $m$ rows, $n$ columns, and entries in $K$. Each $V=(V_a)_{a \in Q_1}$ in $\rep_Q(\bd)$ gives a representation of $Q$, and so $\rep_Q(\bd)$ parametrizes representations of $Q$ with vector space $K^{\bd(z)}$ at vertex $z\in Q_0$. %Indeed, each $V=(V_a)_{a \in Q_1}$ in $\rep_Q(\bd)$ gives a representation of $Q$.%(That is, a fixed vector space of dimension $\bd(v)$ and ordered basis at each vertex $v\in Q_0$.

% 
%at we represent a linear map from $\CC^m$ to $\CC^n$ by an $m\times n$ matrix which acts by right multiplication on row vectors.  

There is a \textbf{base change group}
\begin{equation}
\GL(\bd) := \prod_{z \in Q_0} \GL_{\bd(z)}(K)
\end{equation}
whose action at each vertex induces an action on $\rep_Q(\bd)$. Explicitly, if $g = (g_z)_{z\in Q_0}$ is an element of $\GL(\bd)$, and $V = (V_a)_{a\in Q_1}$ is an element of $\rep_Q(\bd)$, then 
\begin{equation}
%(g_v)_{v\in Q_0}\cdot(V_a)_{a\in Q_1} = (g_{ha}V_ag_{ta}^{-1})_{a\in Q_1}. 
g \cdot V = (g_{ha}V_ag_{ta}^{-1})_{a\in Q_1}. 
\end{equation}
Two points $V, W \in \rep_Q(\bd)$ lie in the same orbit if and only if $V$ and $W$ are isomorphic as representations of $Q$.%$V \simeq W$.

Clearly, $\rep_Q(\bd)$ is isomorphic to affine space of dimension \[\sum_{a\in Q_1}\bd(ha)\times \bd(ta),\] and the coordinate ring $K[\rep_Q(\bd)]$ is generated by the coordinates that pick out the matrix entries. In the case of interest in this paper, where $Q$ is bipartite of type $A$, labeled as in \eqref{eq:typea}, we write $A_i$ and $B_i$ for matrices over $\za_i$ and $\zb_i$ with variable entries. 
That is, these matrices have entries in $K[\rep_Q(\bd)]$ such that evaluating $A_i$ (resp. $B_i$) at a point $V \in \rep_Q(\bd)$ gives the matrix $V_{\za_i}$ (resp. $V_{\zb_i}$) over the arrow $\za_i$ (resp. $\zb_i$). % Matrices $B_i$ are defined similarly.

%%%%%%%%%%%%%%%%%%%%%%%%%%%%%%%%%%%%%%%%%%%%%%%%%%
%%%%%%%%%%%%%%%%%%%%%%%%%%%%%%%%%%%%%%%%%%%%%%%%%%

\subsection{Schubert varieties}\label{sect:schubert}
%\todo{summary, credit that it's similar to KMS}
Here we present the basic facts about Schubert varieties that we need, following our main reference \cite[\S1]{KMS}. Throughout, $Q$ denotes a bipartite quiver of type $A$, labeled as in \eqref{eq:typea}. We fix a dimension vector $\bd$, and let $d_x := \sum_i \bd(x_i)$, $d_y := \sum_i \bd(y_i)$, and $d := d_x + d_y$. 

Let $G:=\GL_d(K)$ and let $P$ be the parabolic subgroup of block \emph{lower} triangular matrices where the diagonals have block sizes $\bd(y_0), \bd(y_1), \dotsc, \bd(y_n), \bd(x_n), \bd(x_{n-1}), \dotsc, \bd(x_1)$.  Let $B^+$ (resp. $B^-$) denote the subgroup of upper (resp. lower) triangular matrices in $G$. \textbf{Schubert cells} are $B^+$-orbits in the partial flag variety $P\backslash G$ for the $B^+$-action by right multiplication, and \textbf{Schubert varieties} are the closures of these orbits. Analogously, \textbf{opposite Schubert cells} are $B^-$-orbits and \textbf{opposite Schubert varieties} are their closures. 

Let $W:=S_d$, the symmetric group on $d$ letters, and consider a permutation $v \in W$ as a matrix with a 1 in position $(i, v(i))$, and zeros elsewhere.   Let $W_P := S_{\bd(y_0)}\times \cdots \times S_{\bd(x_1)}$ (a standard parabolic subgroup), considered as a subgroup of $W$ consisting of permutation matrices down the block diagonal.
For any $v\in W$, the coset $(W_P)v$ has a unique element of minimal length\footnote{Recall that the length $l(v)$ of $v\in S_d$ is the number of pairs $(i,j)\in \{1,\dots,d\}\times \{1,\dots,d\}$ with the property that $i<j$ but $v(i)>v(j)$.} \cite[Cor.~2.4.5]{MR2133266}. Let $W^P$ denote the set of these minimal length coset representatives. Schubert (or opposite Schubert) cells are indexed by the elements in $W^P$; given $v\in W^P$, let $X_v^\circ$ denote the Schubert cell $P\backslash PvB^+$, and let $X_v$ denote its closure. Similarly, let $X^v_\circ$ denote the opposite Schubert cell $P\backslash PvB^{-}$, and let $X^v$ denote its closure.  

Throughout this paper, we will be interested in the permutation
\begin{equation}\label{eq:permCell}
\bw:= \begin{pmatrix}
0 & \bid_{d_y}\\
\bid_{d_x} & 0 \\
\end{pmatrix}\in W^P,
\end{equation}
where $\bid_{d_y}$ denotes a size $d_y$ identity matrix.
The opposite Schubert cell $\cell = P\backslash P\bw B^{-}$ is isomorphic to the space of matrices of the form 
\begin{equation}\label{eq:cell}
\begin{pmatrix}
* & \bid_{d_y}\\
\bid_{d_x} & 0 \\
\end{pmatrix}
\end{equation}
\comment{%larger version of matrix
\begin{equation}\label{eq:cell}
Z= \begin{pmatrix}
\begin{tikzpicture}[every node/.style={minimum width=1.5em}]
\matrix (m0) [matrix of math nodes]
{
& & &\bid_{\bd(y_0)} &  & \\
& & & & \ddots & \\
& & & &  &\bid_{\bd(y_n)} \\
\bid_{\bd(x_n)} &  & \\
 & \ddots & \\
&  &\bid_{\bd(x_1)} \\
};
\node[scale=4] (star) at (m0-5-2 |- m0-2-5) {*};
\node[scale=3] (zero) at (m0-5-2 -| m0-2-5)  {$0$};
\draw (m0-4-1.north west) -- (m0-3-6.south east);
\draw (m0-1-4.north west) -- (m0-6-3.south east);
\end{tikzpicture}
\end{pmatrix}
\end{equation}} %end comment
where $*$ denotes a block of arbitrary entries. We name the space of these matrices $\mcell$ and note that the isomorphism from $\mcell$ to $\cell$ is the map which sends a matrix to its coset mod $P$. %This space of matrices, which we denote by $Y^\bw_\circ$, is isomorphic to $\mathbb{A}^{d_x d_y}$.% As such, \emph{we often do not make any distinction between this space of matrices and the opposite Schubert cell $\cell$.} %This space is isomorphic to $\mathbb{A}^{d_x d_y}$ and may be identified a Schubert cell in a partial flag variety as follows:

Let $Z$ be a matrix of the form shown in (\ref{eq:cell}) that has indeterminates in the block labelled~$*$. 
Let $v\in W^P$.  From \cite[\S6]{Fulton} (see also \cite{MR2422304}, and \cite[\S1.3]{KMS}), the intersection $X_v \cap \cell$ is isomorphic to a subvariety of $\mcell$ obtained by imposing conditions on the ranks of certain submatrices of $Z$. Following \cite{MR2422304}, we call an intersection of a Schubert variety with an opposite Schubert cell a \textbf{Kazhdan-Lusztig variety}.  Let $Z_{p\times q}$ denote the northwest submatrix of $Z$ consisting of the top $p$ rows and left $q$ columns. %\ryan{not notation we want later (blocks)} 
Let \[I_v:=\langle \text{minors of size } (1+\rank v_{p\times q}) \textrm{ in } Z_{p\times q}~|~(p,q)\in\{1,\dots,d\}\times\{1,\dots,d\}\rangle.\] Then, $X_v\cap\cell$ is isomorphic to the subvariety \[Y_v:=\Spec K[Y^{\bw}_\circ]/I_v.\] Note that $Y_v$ is non-empty if and only if $v \leq \bw$ in Bruhat order. When $Y_v$ is non-empty, it has dimension $l(\bw)-l(v) = d_xd_y-l(v)$, where $l(v)$ denotes the length of the permutation $v\in W^P$.  

We end this section by recalling another useful result of Fulton; the ideal $I_v$ has a much smaller generating set than the one given above. 
To a permutation $v\in S_d$, assign a $d\times d$ grid with a $\times$ placed in position $(i,v(i))$. The set of locations (or boxes) in the grid that have a $\times$ neither directly north nor directly west is the \textbf{diagram} of $v$. The number of boxes in the diagram is the length of $v$. Fulton's \textbf{essential set} $\mathcal{E}ss(v)$ is the set of those $(i,j)\in \{1,\dots,d\}\times \{1,\dots,d\}$ such that neither $(i+1,j)$ nor $(i,j+1)$ is in the diagram of $v$. By \cite[\S3]{Fulton}, we have
\[I_v = \langle \text{minors of size}\ (1+\rank v_{p\times q})\textrm{ in } Z_{p\times q}~|~ (p,q)\in \mathcal{E}ss(v)\rangle.\] 

%%%%%%%%%%%%%%%%%%%%%%%%%%%%%%%%%%%%%%%%%%%%%%%%%%
%%%%%%%%%%%%%%%%%%%%%%%%%%%%%%%%%%%%%%%%%%%%%%%%%%

\section{Describing orbits using rank conditions}\label{sect:qrc}

\subsection{From orbits to ranks of matrices}
In this section, we construct a collection of matrices with the property that the ranks of these matrices completely determine orbits in $\rep_Q(\bd)$. These matrices naturally arise from minimal projective resolutions of the (non-projective) indecomposable representations of $Q$. 

Define $M_Q$ to be the matrix with entries in $K[\rep_Q(\bd)]$ built in block form as 
\begin{equation}\label{eq:M}
M_Q=
\begin{pmatrix}
& & & & A_1\\
& & & A_{2} & B_1\\
& & A_{3} & B_{2}\\
& \Ddots&  \Ddots & \\
A_n & B_{n-1} \\
B_n\\
\end{pmatrix},
\end{equation}
with 0 entries in the unlabeled blocks. (Recall that in the last paragraph of Section \ref{sect:repspaces} we have associated to each arrow $\zg$ of $Q$ a matrix with entries in $K[\rep_Q(\bd)]$.)
%For any two arrows $\zg, \zd$ in $Q$, let $M[\zg, \zd]$ be the submatrix of $M_Q$ whose upper right block is associated to $\zg$ and whose lower left block is associated to $\zd$.  For example, we have
For an interval $J=[\zg, \zd] \subseteq Q$ having leftmost arrow $\zg$ and rightmost arrow $\zd$,\footnote{For an interval $\{v\}$ with no arrows, one should take $M_{\{v\}}$ to have $\bd(v)$ rows and 0 columns.}
 let $M_J$ be the submatrix of $M_Q$ whose upper right block is associated to $\zg$ and whose lower left block is associated to $\zd$.  For example, we have

\begin{equation}\label{eq:Megs}
M_{[\za_3, \zb_6]}=
\begin{pmatrix}
& & & A_3\\
& & A_{4} & B_3 \\
& A_{5} & B_{4}\\
A_6 & B_{5} \\
B_6\\
\end{pmatrix} \quad \text{and} \quad
M_{[\zb_2, \za_4]}=
\begin{pmatrix}
 & A_{3} & B_2 \\
 A_{4} & B_{3}\\
\end{pmatrix}.
\end{equation}
Each $M_{J}$ has entries in the coordinate ring $K[\rep_Q(\bd)]$, and evaluating at $V \in \rep_Q(\bd)$ has the effect of replacing each $A_k$ with $V_{\za_k}$ and $B_k$ with $V_{\zb_k}$.

The rank of such a matrix is invariant under action of the base change group, so each interval $J$ defines a function which is constant on orbits.
\begin{equation}\label{eq:rankfunctions}
r_{J} \colon \rep_Q(\bd) \to \mathbb{Z}_{\geq 0}, \quad r_{J}(V) = \rank M_{J}(V)
\end{equation}
%By convention, if $J$ consists of a single vertex with no arrows, then we set $r_J(V) = \bd(v)$.
%that would mess up rank to lace formula it seems

%\begin{remark}
%The reader with little background in the representation theory of quivers may wish to just read the statement of Proposition \ref{prop:rankorbit} and skip the remainder of this section.
%On the other hand, experts in geometry of representations of finite-dimensional algebras may consider Proposition \ref{prop:rankorbit} obvious.
%recognize that the results essentially follow from the equivalence of Hom and degeneration orders for representation-finite algebras proven by Zwara \cite{MR1476404}.  \ryan{true? closures?}
%As mentioned in Remark \ref{rmk:access}, we develop the ideas in a self-contained way to illustrate how our matrices naturally arise from representation theory.  
%This section may be related to a recent paper of Riedtmann and Zwara  \cite{MR3008913}, which we learned of while finishing this paper.
%\end{remark}

%We will see in Proposition \ref{prop:rankorbit} below that knowing these ranks is equivalent to knowing the orbit of a representation.

\begin{proposition}\label{prop:rankorbit}
Two representations $V, W \in \rep_Q(\bd)$ lie in the same $\GL(\bd)$-orbit if and only if $r_J(V) = r_J(W)$ for all intervals $J \subseteq Q$.
\end{proposition}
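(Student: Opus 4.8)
The plan is to reduce the statement to the Krull--Schmidt decomposition \eqref{eqn:KS} by showing that the multiset of interval multiplicities $\{s_J\}$ is determined by, and determines, the rank data $\{r_J(V)\}$. Since $V$ and $W$ lie in the same orbit exactly when $V \simeq W$ as representations, which by Krull--Schmidt happens exactly when $s_J(V) = s_J(W)$ for all intervals $J$, it suffices to establish a bijective correspondence between the tuple $(s_J)_{J \subseteq Q}$ and the tuple $(r_J)_{J \subseteq Q}$.

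First I would observe that the matrix $M_J$ is, by construction, the matrix obtained by applying $\Hom_Q(-, V)$ to a map of projectives $P^1_J \to P^0_J$ whose cokernel is the indecomposable $\II_J$; this is why Section \ref{sect:proj} was set up. Concretely, for the interval $J = [\zg,\zd]$ the relevant projective presentation of $\II_J$ has $M_J$ as the matrix of the differential after evaluating at $V$, so $r_J(V) = \rank M_J(V)$ computes (up to a fixed additive constant depending only on $\bd$ and $J$, not on $V$) the dimension of $\Hom_Q(\II_J, V)$ or of a $\Hom$/$\Ext$ combination. The key point is additivity: because $\Hom_Q(P, -)$ is exact on projectives and rank is additive on block-diagonal matrices, $r_J(V)$ is a $\mathbb{Z}$-linear function of the multiplicity vector $(s_K)_{K\subseteq Q}$, i.e. $r_J(V) = \sum_{K} c_{JK}\, s_K(V)$ for integers $c_{JK}$ that I would compute from the combinatorics of how intervals overlap (essentially $\dim\Hom_Q(\II_J, \II_K)$ or the analogous quantity, which is $0$ or $1$ for type $A$ intervals depending on a nesting/overlap condition).

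Next I would argue that the integer matrix $C = (c_{JK})$, indexed by intervals of $Q$, is invertible over $\mathbb{Z}$ (or at least over $\mathbb{Q}$ — invertibility suffices). Ordering the intervals suitably (say by inclusion, or by left endpoint then length), $C$ becomes triangular with $1$'s on the diagonal: the coefficient $c_{JJ}$ of $s_J$ in $r_J$ is $1$, and $c_{JK} = 0$ unless $K$ relates to $J$ in the chosen partial order. Triangularity with unit diagonal gives invertibility, hence the linear system $r_J(V) = \sum_K c_{JK} s_K(V)$ can be solved uniquely for the $s_K(V)$ in terms of the $r_J(V)$. Therefore $\{r_J(V)\}_{J} = \{r_J(W)\}_J$ forces $\{s_K(V)\}_K = \{s_K(W)\}_K$, which by Krull--Schmidt gives $V \simeq W$; the converse direction is immediate since each $r_J$ is constant on orbits, as already noted in \eqref{eq:rankfunctions}.

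The main obstacle is making the bookkeeping in the middle step precise: correctly identifying $M_J$ with (the evaluation of) a genuine projective presentation of $\II_J$, and then pinning down the constants and the matrix $C$. One has to be careful that $M_J$ as a submatrix of $M_Q$ really is $\Hom_Q(-,V)$ applied to the minimal projective presentation of $\II_J$ and not something off by stray rows/columns (the footnote conventions about zero-dimensional vertices and empty matrices are exactly there to handle edge cases). Once the presentation is correctly identified, computing $\dim\Hom_Q(\II_J,\II_K)$ for intervals in a bipartite type $A$ quiver is a routine finite check, and the triangularity of $C$ follows from the fact that $\II_J$ appears in its own presentation's cokernel with multiplicity one while all other interactions are "lower order" in the inclusion order. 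An alternative, slightly more hands-on route that avoids invoking the $\Hom$-space interpretation: prove directly by induction on $\sum_z \bd(z)$ (peeling off one indecomposable summand at a time via the rank conditions, starting from the interval $J$ with $r_J(V)$ maximal relative to its predecessors) that the $\{r_J\}$ determine the $\{s_J\}$; this is essentially the same triangular elimination phrased combinatorially.
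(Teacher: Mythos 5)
Your overall strategy---reduce to the Krull--Schmidt decomposition and invert the linear relation between the ranks $r_J$ and the multiplicities $s_K$---is viable, and it is essentially what the paper does in the next subsection via the rank-to-lace formula (Lemma \ref{lem:rankToLace}); the paper's own proof of this proposition takes a different route, identifying each $M_J$ as $\Hom_Q(?,V)$ applied to a projective resolution of an indecomposable $\II_{f(J)}$ (with $f$ a nontrivial bijection on intervals, e.g.\ $f([\za_i,\zb_j])=[\zb_i,\za_j]$, not the identity) and then invoking Auslander's theorem that orbits are determined by the numbers $\dim\Hom_Q(X,V)$ over indecomposable $X$. However, as written your key invertibility step has a genuine gap. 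The coefficients of your linear system are $c_{JK}=r_J(\II_K)=\lceil \#(J\cap K)/2\rceil$ (this is \eqref{eq:ranklaceeval}), so the diagonal entry is $\lceil \#J/2\rceil$, not $1$, and $c_{JK}\neq 0$ whenever $J\cap K$ contains an arrow --- in particular for incomparable, partially overlapping intervals. Since $C$ is symmetric, it cannot be triangular with respect to \emph{any} ordering of the intervals unless it were diagonal, which it is not; so the claimed ``triangular with unit diagonal'' structure fails both for the inclusion order and for the left-endpoint-then-length order. Invertibility of the relevant system is true, but it needs an actual argument, e.g.\ the explicit inclusion--exclusion inversion $s_J(V)=(-1)^{\#J}\bigl(r_{J_L}(V)+r_{J_R}(V)-r_{J_L\cap J_R}(V)-r_{J_L\cup J_R}(V)\bigr)$ of Lemma \ref{lem:rankToLace}, or your sketched induction, which would have to be carried out in detail.

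A second, smaller gap: there is no bijective correspondence between the tuples $(s_K)_K$ and $(r_J)_J$, because every $r_J$ vanishes on the simple summands $\II_{\{v\}}$ supported at a single vertex ($M_{\{v\}}$ has no columns, and $\lceil\#(J\cap\{v\})/2\rceil=0$). The ranks determine only the multiplicities of intervals containing at least one arrow; to recover the remaining multiplicities you must use that $V$ and $W$ have the same fixed dimension vector $\bd$, exactly as the paper notes in the remark following Lemma \ref{lem:rankToLace}. Finally, your identification of $M_J$ with a presentation of $\II_J$ itself is off: its cokernel is $\II_{f(J)}$ for the bijection $f$ above. This does not harm the linear-algebra route, since additivity $r_J(V_1\oplus V_2)=r_J(V_1)+r_J(V_2)$ is immediate without any $\Hom$ interpretation, but it would matter if you wanted to quote $\dim\Hom_Q(\II_J,V)$ directly.
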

\begin{proof}
We prove the proposition by showing that there is a bijection $f$ of the set of intervals in $Q$ with itself such that, for any $V \in \rep_Q(\bd)$ and interval $J \subseteq Q$, we have
\begin{equation}\label{eq:rankhom}
r_J(V) + \dim \Hom_Q(\II_{f(J)}, V) = \text{ a constant depending on }\bd \text{ but not }V .% \sum_k \bd(y_k) .
\end{equation}
Given this, the following result of Auslander then implies that knowing these ranks is equivalent to knowing the orbit of a representation:  $V$ and $W$ lie in the same orbit if and only if $\dim \Hom_Q(X, V) = \dim \Hom_Q(X, W)$ for all indecomposable $X \in \rep(Q)$ \cite{Auslander:1982fk}.

Given an interval $J$, we replace all matrices $A_i, B_i$ in $M_J$ with the formal arrows $\za_i, \zb_i$ to get a matrix $\Phi_J$. Then, as reviewed in Section \ref{sect:proj}, this defines morphism $\Phi_J\colon P^1 \to P^0$ between two projective representations of $Q$.\footnote{An interval containing a single vertex $v$ and no arrows corresponds to the map $0 \to P(v)$ with projective cokernel.}  The explicit decomposition of $P^1$ and $P^0$ into indecomposables is easy but notationally cumbersome, as it depends on whether each of the endpoints of $J$ is of type $x$ or $y$.

For example, the case that $J$ is of the form $J=[\za_i, \zb_j]$ gives
\begin{equation}
\Phi_{[\za_i, \zb_j]}=
\begin{pmatrix}
& & & & \za_i\\
& & & \za_{i+1} & \zb_i\\
& & \za_{i+2} & \zb_{i+1}\\
& \Ddots&  \Ddots & \\
\za_j & \zb_{j-1} \\
\zb_j\\
\end{pmatrix}.
\end{equation}
which is a homomorphism
\begin{equation}
 P^1 = \bigoplus_{k=i-1}^j P(y_k)  \xrightarrow{\Phi_{[\za_i, \zb_j]}} P^0 = \bigoplus_{k=i}^j P(x_k) .
\end{equation}
Then it is straightforward to see that $\coker \Phi_{[\za_i, \zb_j]} \simeq \II_{[\zb_i, \za_j]}$.  In essence, each row with two non-zero entries glues two projectives at a $y$-type vertex, and the first and last row kill the vector spaces at the interval endpoints, $y_{i-1}=h\za_i$ and $y_j=h\zb_j$.  

The bijection on intervals is given by setting $f(J) = J'$ where $\II_{J'} \simeq \coker \Phi_{J}$; for example, we have $f([\za_i, \zb_j]) = [\zb_i, \za_j]$ from the preceding paragraph.  Since we only need to know that the bijection exists, and $f$ only appears in this proof, details of the other cases are omitted.
This bijection shows that the collection $\{\Phi_{J}\}$ is a set of projective resolutions of all indecomposables $\{\II_J\}$.

These allow us to compute all $\dim \Hom_Q(\II_J, V)$. Given $V \in \rep_Q(\bd)$, apply the functor $\Hom_Q(?, V)$ to the resolution
\begin{equation}
0 \to P^1 \xrightarrow{\Phi_{J}} P^0 \to \II_{f(J)} \to 0
\end{equation}
to get the exact sequence
\begin{equation}\label{eq:homphi}
0 \to \Hom_Q (\II_{f(J)]}, V) \to \Hom_Q (P^0, V) \xrightarrow{\Hom_Q(\Phi_{J}, V)} \Hom_Q (P^1, V).
\end{equation}
Then (again from Section \ref{sect:proj}) we can naturally identify $\Hom_Q(\Phi_{J}, V)$ with $M_J(V)$ via the diagram
\begin{equation}
\vcenter{\hbox{\begin{tikzpicture}
\node (ul) at (0,2) {$\Hom_Q (P^0, V)$};
\node (ur) at (5,2) {$\Hom_Q (P^1, V)$};
\node (ll) at (0,0) {$\bigoplus_k V_{x_k}$};
\node (lr) at (5,0) {$\bigoplus_k V_{y_k}$};
\path[->]
(ul) edge node[above] {$\Hom_Q(\Phi_{J}, V)$} (ur)
(ll) edge node[above] {$M_J(V)$} (lr)
(ul) edge node[left] {$\cong$} (ll)
(ur) edge node[right] {$\cong$} (lr);
\end{tikzpicture}}}
\end{equation}
(where the precise collection of $V_i$ in the bottom row depends on the interval type).  Then this diagram along with \eqref{eq:homphi} shows that
\begin{equation}
r_J(V) + \dim \Hom_Q(\II_{f(J)}, V) = \rank M_J(V) + \ker M_J(V) = \sum_k \bd(x_k),
\end{equation}
which gives equation \eqref{eq:rankhom} of the proposition and completes the proof.
\end{proof}

\subsection{Quiver rank arrays}
%Let $r_{J} \colon \rep_Q(\bd) \to \mathbb{Z}_{\geq 0}$ be the function defined in the previous section (see Equation \ref{eq:rankfunctions}). By Proposition \ref{prop:rankorbit}, two representations $V, W\in \rep_Q(\bd)$ lie in the same $GL(\bd)$-orbit if and only if $r_J(V) = r_J(W)$ for all intervals $J\subseteq Q$. 
By Proposition \ref{prop:rankorbit}, an orbit is completely determined by an array of nonnegative integers. This array of numbers is the focus of this section.

\begin{definition}\label{def:satisfies}
A \textbf{quiver rank array} is a function
\begin{equation}
\br \colon \{\textup{intervals in }Q\} \to \mathbb{Z}_{\geq 0}
\end{equation}
such that there exists $V \in \rep_Q(\bd)$ with $\br_J = r_J(V)$ for all intervals $J$. %\footnote{Note that we write $\br_J$ instead of $\br(J)$. }
%If $V\in \rep_Q(\bd)$ has the property that $r_J(V) = \br_J$, for all $J$, then 
In this case we say that $V$ \textbf{satisfies} the quiver rank array $\br$.
\end{definition}

For the remainder of the paper, we use the following notation.

\begin{notn}
Because orbits are in one-to-one correspondence with quiver rank arrays, we let $\mathcal{O}_\br$ denote the orbit determined by the quiver rank array $\br$, and we let $\overline{\mathcal{O}}_{\br}$ denote its closure. 
\end{notn}

\begin{remark}
Quiver rank arrays are partially ordered in the standard way for functions, namely, $\br' \leq \br$ when $\br'_J \leq \br_J$ for all intervals $J \subseteq Q$. At the end of the next section, we'll see that $\overline{\mathcal{O}}_{\br'}$ is contained in $\overline{\mathcal{O}}_{\br}$ if and only if $\br'\leq \br$ (see Theorem \ref{thm:mainThmSection3}).
\end{remark}

The remainder of this section is concerned with the question of when a given function $\br \colon \{\textup{intervals in }Q\} \to \mathbb{Z}_{\geq 0}$ is actually a quiver rank array. To begin, we let $\II_{J'}$ denote the indecomposable representation supported on the interval $J'$, and we let $\#J'$ denote the number of arrows in the interval $J'$. Observe that
\begin{equation}\label{eq:ranklaceeval}
   r_J (\II_{J'}) = \left\lceil \frac{\#(J \cap J')}{2} \right\rceil, 
\end{equation}
where we use $\lceil x \rceil$ to denote the least integer that is greater than or equal to $x$.
Therefore, if $V\in \rep(Q)$ is isomorphic to the direct sum of indecomposables
\[
V\cong \bigoplus_{J' \subseteq Q} s_{J'}\, \II_{J'}, \quad s_{J'} \in \mathbb{Z}_{\geq 0},\] we see that
\begin{equation}\label{eq:sumranklaceeval}
r_J(V) = \sum_{J' \subseteq Q}s_{J'}\left\lceil \frac{\#(J \cap J')}{2} \right\rceil.
\end{equation} 
This is the bipartite analogue of the \textbf{lace to rank formula} (1.2) from \cite{KMS}. Inverting the relation (\ref{eq:sumranklaceeval}) provides a way to check when a function $\br \colon \{\textup{intervals in }Q\} \to \mathbb{Z}_{\geq 0}$ is actually a quiver rank array (i.e. when there is a $V\in \rep_Q(\bd)$ that satisfies $\br$). To write down this \textbf{rank to lace formula}, we introduce some ad hoc notation (used only in Lemma \ref{lem:rankToLace}): let $J$ be an arbitrary interval of $Q$, and let $J_L$ (resp., $J_R$) denote the interval obtained by shifting $J$ one edge to the left (resp. right). If shifting the interval would take it outside of $Q$, we simply truncate it to lie within $Q$. (Alternatively, we can think of this as working with a longer quiver with dimension vector $0$ at the new vertices.)

\begin{lemma}\label{lem:rankToLace}
Let $V\in \rep_Q(\bd)$, let $J$ be an interval with at least one arrow, and let $s_J(V)$ denote the multiplicity of the indecomposable $\II_J$ in the Krull-Schmidt decomposition of $V$. Then, we have
\begin{equation}\label{eq:ranklace}
s_J(V) = (-1)^{\#J}(r_{J_L}(V) + r_{J_R}(V) - r_{J_L \cap J_R}(V) - r_{J_L\cup J_R}(V)).
\end{equation}
\end{lemma}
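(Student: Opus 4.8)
The plan is to use the lace-to-rank formula \eqref{eq:sumranklaceeval} to express each $r_{J'}(V)$ appearing on the right-hand side of \eqref{eq:ranklace} in terms of the multiplicities $s_{J''}(V)$, and then to verify that the alternating combination isolates the single coefficient $s_J(V)$. Writing $n_{J''} := s_{J''}(V)$, equation \eqref{eq:sumranklaceeval} says $r_K(V) = \sum_{J''} n_{J''} \lceil \#(K \cap J'')/2 \rceil$ for any interval $K$. So the quantity on the right of \eqref{eq:ranklace} equals
\[
(-1)^{\#J} \sum_{J'' \subseteq Q} n_{J''}\left( \Big\lceil \tfrac{\#(J_L \cap J'')}{2} \Big\rceil + \Big\lceil \tfrac{\#(J_R \cap J'')}{2} \Big\rceil - \Big\lceil \tfrac{\#(J_L\cap J_R \cap J'')}{2} \Big\rceil - \Big\lceil \tfrac{\#(J_L\cup J_R \cap J'')}{2} \Big\rceil \right),
\]
and it suffices to show the bracketed coefficient of $n_{J''}$ equals $(-1)^{\#J}$ when $J'' = J$ and $0$ otherwise. (One should be slightly careful about the truncation convention: if $J$ already touches an end of $Q$ then $J_L$ or $J_R$ is a proper subinterval of $J$, but since $J''$ ranges over intervals of $Q$ the intersection $J_* \cap J''$ is unaffected by whether we think of $J_L$ as truncated or as living in a longer quiver with zero dimension vector, so the formula is insensitive to this.)

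The key combinatorial step is the following. For a fixed interval $J = [\zg,\zd]$ with $m := \#J \geq 1$ arrows, and an arbitrary interval $J''$, I want to understand $c(J'') := g(J_L \cap J'') + g(J_R \cap J'') - g(J_L \cap J_R \cap J'') - g(J_L \cup J_R \cap J'')$ where $g(K) := \lceil \#K / 2 \rceil$ and $\#K$ is the arrow count (interpreting $g(\emptyset)=0$, and more generally $g$ of a disconnected or empty intersection as the ceiling of half its arrow count; note $J_L \cap J''$ etc. are genuine subintervals of $Q$ so this is unambiguous). The cleanest approach is to note that $J_L \cap J_R$ is $J$ with both endpoints shaved (so $\#(J_L\cap J_R) = m-1$ when $m\ge 1$, or empty/a vertex when $m$ is small) and $J_L \cup J_R$ is $J$ extended by one arrow on each side (when room permits). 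Intersecting each of the four intervals $J_L, J_R, J_L\cap J_R, J_L\cup J_R$ with $J''$ and counting arrows, I reduce to a one-variable identity: parametrize the overlap of $J''$ with the relevant window around $J$ by how many arrows of the extended interval $J_L\cup J_R$ are covered and from which side, and check that the alternating sum of ceilings telescopes. The identity $\lceil \tfrac{a+1}{2}\rceil + \lceil \tfrac{b+1}{2}\rceil - \lceil\tfrac{a+b}{2}\rceil$-type cancellations (two inclusion-exclusion-like terms against two others differing by $\pm 1$ arrow at each end) will collapse to $0$ unless $J''$ is positioned to "see" exactly the two shaved-off endpoint arrows of $J$, which forces $J'' = J$, in which case a direct substitution gives $g(J) + g(J) - g(J\text{ minus both ends}) - g(J\text{ plus both ends}) = \lceil m/2\rceil + \lceil m/2\rceil - \lceil (m-1)/2 \rceil - \lceil (m+2)/2 \rceil$. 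For $m$ even this is $\tfrac m2 + \tfrac m2 - \tfrac m2 - (\tfrac m2 + 1) = -1 = (-1)^m$; for $m$ odd it is $\tfrac{m+1}2 + \tfrac{m+1}2 - \tfrac{m-1}2 - \tfrac{m+1}2 = \tfrac{m+1}{2} - \tfrac{m-1}{2} = 1 = (-1)^m$. So the coefficient of $n_J$ is exactly $(-1)^m$, and multiplying by the prefactor $(-1)^{\#J}$ yields $n_J = s_J(V)$ as desired.

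The step I expect to be the main obstacle is the case analysis showing $c(J'') = 0$ for all $J'' \neq J$ — in particular handling boundary cases cleanly: intervals $J''$ that straddle only one endpoint of $J$, intervals with no arrows (single vertices), and the situation where $J$ abuts an end of $Q$ so that one of $J_L, J_R$ fails to genuinely extend $J$. The way to keep this manageable is to treat $\#(\,\cdot\cap J'')$ as a function of $J''$ that changes by at most $1$ when we pass between $J, J_L, J_R$ and their combinations, and to observe that $c$ is, up to the truncation subtlety, a "discrete mixed second difference" of the concave-ish function $\ell \mapsto \lceil \ell/2 \rceil$; the only place a mixed second difference of a sum of such terms can be nonzero is where both "differencing directions" act nontrivially on the same summand, which pins down $J'' = J$. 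Alternatively — and this may be the slicker write-up — one can avoid the multivariable bookkeeping entirely by invoking Proposition \ref{prop:rankorbit}: it suffices to check \eqref{eq:ranklace} on the indecomposables $V = \II_{J''}$ (since both sides are "additive" — the left side by Krull–Schmidt and the right by additivity of rank functions on direct sums, which follows from \eqref{eq:sumranklaceeval}), and for $V = \II_{J''}$ the right side is computed directly from \eqref{eq:ranklaceeval}, reducing everything to the single-variable ceiling identity above with $J''$ in place of the summation index.
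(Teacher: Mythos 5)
Your overall strategy coincides with the paper's: both amount to checking \eqref{eq:ranklace} on indecomposables, i.e.\ showing that the coefficient of $s_{J''}(V)$ obtained by expanding the right-hand side via \eqref{eq:sumranklaceeval} is $(-1)^{\#J}$ for $J''=J$ and $0$ otherwise (your closing ``alternatively'' remark is literally the paper's proof; note the reduction needs only additivity of the $r_K$'s on direct sums, which is elementary and does not require Proposition \ref{prop:rankorbit}, nor does it ``follow from'' \eqref{eq:sumranklaceeval} --- it is what proves \eqref{eq:sumranklaceeval}). The problem is that the one case you actually compute, $J''=J$, is computed incorrectly. Writing $m=\#J$, the relevant arrow counts are $\#(J_L\cap J)=\#(J_R\cap J)=m-1$, $\#(J_L\cap J_R\cap J)=m-2$, and $\#\bigl((J_L\cup J_R)\cap J\bigr)=m$; you instead used $m$, $m$, $m-1$, $m+2$, i.e.\ you did not intersect with $J''=J$ (your last term is $\lceil \#(J_L\cup J_R)/2\rceil$, not $\lceil \#((J_L\cup J_R)\cap J)/2\rceil$). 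The correct evaluation is $2\lceil (m-1)/2\rceil-\lceil (m-2)/2\rceil-\lceil m/2\rceil$, which is $+1$ for $m$ even and $-1$ for $m$ odd, i.e.\ $(-1)^m$ as required. Your displayed arithmetic instead yields $-1$ for $m$ even and (after the further slip $\lceil(m+2)/2\rceil=(m+1)/2$ for odd $m$; it is $(m+3)/2$) would yield $0$ for $m$ odd, and the identifications ``$-1=(-1)^m$ for $m$ even'' and ``$1=(-1)^m$ for $m$ odd'' have the parities backwards. So your final claim that the coefficient equals $(-1)^m$ is true, but the computation as written does not establish it.

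The second gap is the one you flag yourself: the vanishing of the coefficient for every $J''\neq J$, which is where all the content lies, is only supported by a ``mixed second difference of $\ell\mapsto\lceil \ell/2\rceil$'' heuristic. Since $\lceil \ell/2\rceil$ increments only at every other step, whether a given configuration contributes depends on parity of the overlaps, so this really does require a (short, finite) case check rather than a general concavity-type principle. The paper organizes it by partitioning $J_L\cup J_R$ into five subintervals $I_1,\dots,I_5$ determined by the two end arrows of $J$ and the two added arrows, and checking the possible positions of the endpoints of $J''$ (the RHS is $1$ exactly when the leftmost arrow of $J''$ lies in $I_2$ and the rightmost in $I_4$, i.e.\ $J''=J$); the cases where $J''$ meets only one end of $J$, where $J''$ is a single vertex, and where $J$ abuts an end of $Q$ so that $J_L$ or $J_R$ is truncated all cancel, but that needs to be verified, not inferred from the heuristic. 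With the corrected arithmetic above and such a case analysis supplied, your argument becomes the paper's proof.
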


\begin{proof}
Notice that $r_J(V_1\oplus V_2) = r_J(V_1)+r_J(V_2)$, for any representations $V_1, V_2\in \rep(Q)$. So, it suffices to check that the righthand side of (\ref{eq:ranklace}) is $1$ when $V = \II_J$ and is $0$ when $V = \II_{J'}$ for $J'\neq J$. Furthermore, it is enough to check the values of the righthand side of (\ref{eq:ranklace}) when $V = \II_{J'}$ for $J'$ a subinterval of $J_L\cup J_R$. To this end, divide $J_L\cup J_R$ into five intervals: %\ryan{I reversed the order of the intervals, is it correct now?  It looks like the it was partitioned right-to-left before, but the statement below about leftmost and rightmost arrows indicates the opposite order.}
%\[I_1 := J_R - (J\cap J_R),~~~ I_2:= J - (J\cap J_L), ~~~I_3:=J_L\cap J_R,\]\[I_4:=J - (J\cap J_R), ~~~I_5:=J_L - (J\cap J_L).\] 
\[I_1 := J_L - (J\cap J_L),~~~ I_2:= J - (J\cap J_R), ~~~I_3:=J_L\cap J_R,\]\[I_4:=J - (J\cap J_L), ~~~I_5:=J_R - (J\cap J_R).\] 
We must check that the righthand side of (\ref{eq:ranklace}) takes the value $1$ when the leftmost arrow of $J'$ lies in interval $I_2$ and the rightmost arrow lies in interval $I_4$, and takes the value $0$ in all the other cases. We do one of these easy checks and leave the remainder to the reader.

Suppose that $V = \II_J$, so that the leftmost arrow of $J$ is in $I_2$ and the rightmost arrow is in $I_4$. If $\#J$ is even (and is nonzero), then $r_{J_L}(V) = r_{J_R}(V) = r_{J_L\cup J_R}(V) = r_J(V)$ and $r_{J_L\cap J_R}(V) = r_J(V)-1$. If $\#J$ is odd, then $r_{J_L}(V) = r_{J_R}(V) = r_{J_R\cap J_L} = r_J(V)-1$ and $r_{J_L\cup J_R}(V) = r_J(V)$. In either case, the right hand side of equation (\ref{eq:ranklace}) is $1$.
\end{proof}

\begin{remark}
Using equation (\ref{eq:ranklace}) we can recover the Krull-Schmidt decomposition of $V\in \rep_Q(\bd)$. Indeed, equation (\ref{eq:ranklace}) gives the multiplicities of the indecomposables supported on the various intervals with at least one arrow. The multiplicities of the remaining indecomposables (i.e. those supported on a single vertex of $Q$) can then be computed because the dimension vector $\bd$ is fixed. 
\end{remark}

The following corollary is now immediate.

\begin{corollary}
The function $\br \colon \{\textup{intervals in }Q\} \to \mathbb{Z}_{\geq 0}$ is a quiver rank array if and only if the quantity
\[(-1)^{\#J}(\br(J_L) + \br(J_R) - \br(J_L \cap J_R) - \br(J_L\cup J_R) )\]
is nonnegative for each interval $J\subseteq Q$.
\end{corollary}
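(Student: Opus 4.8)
The plan is to derive this corollary directly from Lemma~\ref{lem:rankToLace} together with the characterization of when a function is a quiver rank array via the Krull--Schmidt data. First I would observe that $\br$ is a quiver rank array precisely when there is a genuine representation $V$ realizing it, and by Proposition~\ref{prop:rankorbit} (and the discussion preceding Lemma~\ref{lem:rankToLace}) such a $V$ is determined up to isomorphism by its Krull--Schmidt multiplicities $s_{J'}$. So the question becomes: given the candidate values $\br(J)$ for all intervals $J$, can we solve for nonnegative integer multiplicities $s_{J'}$ consistent with \eqref{eq:sumranklaceeval}? The content of Lemma~\ref{lem:rankToLace} is exactly that the linear system \eqref{eq:sumranklaceeval} is invertible, with the inverse given by the formula on the right-hand side of \eqref{eq:ranklace}; so the only obstruction to realizability is that the resulting $s_{J'}$ be nonnegative integers.

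Concretely, the argument has the following steps. (1) For each interval $J$ with at least one arrow, define $\sigma(J) := (-1)^{\#J}(\br(J_L) + \br(J_R) - \br(J_L \cap J_R) - \br(J_L\cup J_R))$, the quantity in the corollary's statement. Since \eqref{eq:ranklace} is an identity of integer-valued functions that holds for every $V$, the map sending a multiplicity vector $(s_{J'})$ to its rank array is injective, and $\sigma$ is the corresponding left inverse applied to $\br$; thus if $\br = (r_J(V))_J$ for some $V$ then $\sigma(J) = s_J(V) \geq 0$ for all such $J$, proving necessity. (2) Conversely, suppose $\sigma(J) \geq 0$ for all intervals $J$ with an arrow. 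Each $\sigma(J)$ is automatically an integer (it is an integer combination of the integer values of $\br$), so we may set $s_J := \sigma(J)$ for intervals with an arrow. For the single-vertex intervals, we define the remaining multiplicities by the constraint that the total dimension at each vertex equals $\bd$, exactly as in the Remark following Lemma~\ref{lem:rankToLace}; here one must check these forced values are also nonnegative. (3) Form $V := \bigoplus_{J'} s_{J'}\,\II_{J'} \in \rep_Q(\bd)$ and verify, using \eqref{eq:sumranklaceeval} together with the fact that $\sigma$ inverts \eqref{eq:sumranklaceeval}, that $r_J(V) = \br(J)$ for every interval $J$ — so $V$ satisfies $\br$ and $\br$ is a quiver rank array.

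The one place that needs genuine care — and the main obstacle — is step (2): checking that the single-vertex multiplicities forced by the dimension-vector constraint come out nonnegative whenever the $\sigma(J)$ for arrow-containing intervals are nonnegative. The cleanest way to handle this is to note that $\br$ being a candidate rank array already encodes $\br(\{v\})$ implicitly via the conventions (maps from zero-dimensional spaces, the footnote about single-vertex intervals having $\bd(v)$ rows and $0$ columns), so that $r_{\{v\}}(V) = \bd(v)$ is forced and hence the single-vertex multiplicity at $v$ is $\bd(v)$ minus the number of arrow-intervals through $v$ counted with the multiplicities $s_{J'}$; one then argues this is $\geq 0$ because any $\II_{J'}$ with $J'\ni v$ contributes dimension $1$ at $v$ and these contributions are bounded by the (realizable) ranks. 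Alternatively — and this is probably the honest reading of the corollary — one restricts attention, as the authors implicitly do, to $\br$ for which the single-vertex values are already part of the given data and consistent with $\bd$; then step (2) is immediate and the corollary is purely the statement that inverting \eqref{eq:sumranklaceeval} via Lemma~\ref{lem:rankToLace} preserves the cone of nonnegative solutions. I would present it in this second, lighter form, citing the Remark after Lemma~\ref{lem:rankToLace} for the bookkeeping of single-vertex multiplicities, and remark that the nonnegativity of those is automatic from the setup.
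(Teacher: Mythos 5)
Your proposal is correct and follows essentially the same route as the paper, which simply declares the corollary immediate from Lemma~\ref{lem:rankToLace} and the preceding Remark: necessity because the displayed quantity is a Krull--Schmidt multiplicity of any realizing $V$, and sufficiency by inverting \eqref{eq:sumranklaceeval} and assembling the direct sum $\bigoplus s_{J'}\II_{J'}$ padded with vertex-simples. The single-vertex/dimension-vector compatibility you agonize over is a genuine subtlety that the paper glosses over entirely, and your second, lighter reading (the vertex multiplicities are bookkept as in the Remark, with $\bd$ fixed) is the one consistent with the paper's intent, whereas your first suggested fix would be circular if pressed.
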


%%%%%%%%%%%%%%%%%%%%%%%%%%%%%%%%%%%%%%%%%%%%%%%%%%%%%%%%%%%%%%%%%%%%%%%%%%%%%%%%%%%%%%%%%%%%%%%%%
%%%%%%%%%%%%%%%%%%%%%%%%%%%%%%%%%%%%%%%%%%%%%%%%%%%%%%%%%%%%%%%%%%%%%%%%%%%%%%%%%%%%%%%%%%%%%%%%%%

\section{A Zelevinsky map for bipartite type $A$ quivers}

%Let $Q$ be a bipartite quiver of type $A$, labeled as in \eqref{eq:typea}, and let $\bd$ be a fixed dimension vector for $Q$. 
%In this section, we describe a closed immersion from the representation space $\rep_{Q}(\bd)$ to $\cell$, the opposite Schubert cell described in section \ref{sect:schubert}.
%This ``Zelevinsky map'' identifies each orbit closure in $\rep_Q(\bd)$ with a Kazhdan-Lusztig variety in $\cell$. 

Given a quiver rank array $\br$ (see Definition \ref{def:satisfies}), let $I_{\br}$ denote the ideal in the coordinate ring $K[\rep_Q(\bd)]$ defined by \begin{equation}\label{eqn:Ibq} I_{\br}:=\big\langle \text{minors of size }(1+\br_J)\textrm{ in }M_J~\big|~ J\subseteq Q\big\rangle.\end{equation} Let $\zO_{\br} := \Spec(K[\rep_Q(\bd)]/I_\br)$ be the corresponding closed subscheme of $\rep_Q(\bd)$.  

In this section, we show that the orbit closure $\overline{\mathcal{O}}_{\br}$ is scheme-theoretically isomorphic to $\zO_{\br}$, and we show that both of these schemes are isomorphic to a Kazhdan-Lusztig variety in an opposite Schubert cell.  This gives an identification of the poset of orbit closures with a particular subset of a symmetric group under Bruhat order (see Theorem \ref{thm:mainThmSection3}).

\begin{remark}
In the terminology of a recent paper of Riedtmann and Zwara \cite{MR3008913}, $\zO_\br$ is a ``rank scheme''.   We learned after completing the first version of this article that they also have proven that $\zO_\br \simeq \overline{\mathcal{O}}_\br$ using the method of Hom-controlled functors.
\end{remark}

This section is outlined as follows: 
\begin{itemize}
%\item In section \ref{sect:notation}, we show that a given orbit in $\rep_Q(\bd)$ is completely determined by the ranks of a particular collection of matrices.
\item In Section \ref{sect:qrcToNW} we define a closed immersion from each quiver representation space to an opposite Schubert cell of a partial flag variety.
  We call this the ``Zelevinsky map'' in analogy with the equioriented setting, since it also converts quiver rank arrays to rank conditions on certain northwest submatrices of the cell (cf. \cite[\S 1.3]{KMS} for the equioriented case, and also \cite{Zgradednilp, LMdegen}). 
More precisely, it identifies each $\zO_{\br}$ with a subscheme $NW_{\bb(\br)}\subseteq \mcell$ defined by the vanishing of certain minors of northwest submatrices. 
\item Section \ref{sect:zelPerm} is a combinatorial interlude; in analogy with the equioriented setting \cite[Def.~1.7]{KMS}, we define a ``Zelevinsky permutation'' $v(\br)$ associated to a quiver rank array $\br$.
\item In Section \ref{sect:OCtoKL}, we show that the subscheme $NW_{\bb(\br)}$ from Section \ref{sect:qrcToNW} is isomorphic to the Kazhdan-Lusztig variety $X_{v(\br)}\cap\cell$. This gives our main result: we see that $\zO_{\br}$ is reduced, irreducible, and isomorphic to $\overline{\mathcal{O}}_{\br}$. We also get a formula for its dimension. 
\end{itemize}

%%%%%%%%%%%%%%%%%%%%%%%%%%%%%%%%%%%%%%%%%%%%%%%%%%%%%%
%%%%%%%%%%%%%%%%%%%%%%%%%%%%%%%%%%%%%%%%%%%%%%%%%%%%%%

\subsection{From quiver rank arrays to northwest block rank conditions}\label{sect:qrcToNW}

%In this section, we define a closed immersion $\zeta$ from $\rep_Q(\bd)$ to $\mcell$, where $\bw$ is the permutation
%\begin{equation}
%\bw = \begin{pmatrix}
%0 & \bid_{d_y}\\
%\bid_{d_x} & 0 \\
%\end{pmatrix}.
%\end{equation}
%We then show that $V\in \rep_Q(\bd)$ satisfies (in the sense of Definition \ref{def:satisfies}) the quiver rank array $\br$ if and only if particular northwest submatrices of $\zeta(V)$ have corresponding ranks. 

Let $M_Q$ be the matrix defined in \eqref{eq:M}, so evaluating $M_Q$ at a representation 

\begin{equation}\label{eq:typearep}
V=\quad
\vcenter{\hbox{\begin{tikzpicture}[point/.style={shape=circle,fill=black,scale=.5pt,outer sep=3pt},>=latex]
   \node[outer sep=-2pt] (y0) at (-1,0) {$V_{y_0}$};
   \node[outer sep=-2pt] (x1) at (0,1) {$V_{x_1}$};
  \node[outer sep=-2pt] (y1) at (1,0) {$V_{y_1}$};
   \node[outer sep=-2pt] (x2) at (2,1) {$V_{x_2}$};
  \node[outer sep=-2pt] (y2) at (3,0) {$V_{y_2}$};
  \node (dots) at (3.75, 0.5) {$\cdots\cdots$};
   \node[outer sep=-2pt] (x3) at (5,1) {$V_{x_{n-1}}$};
  \node[outer sep=-2pt] (y3) at (6,0) {$V_{y_{n-1}}$};
  \node[outer sep=-2pt] (x4) at (7,1) {$V_{x_n}$};
  \node[outer sep=-2pt] (y5) at (8,0) {$V_{y_n}$};
%   \node[draw, color=white,scale=.6pt,outer sep=2pt] (2) at (2,0) {};
%    \node[draw, color=white,label={below:$\binom{g}{2}$ labelings}]at (0,-1) {};
  \path[->]
  	(x1) edge node[left] {$V_{\za_1}$} (y0) 
	(x1) edge node[left] {$V_{\zb_1}$} (y1)
  	(x2) edge node[left] {$V_{\za_2}$} (y1) 
	(x2) edge node[left] {$V_{\zb_2}$} (y2)
%  	(x3) edge node[left] {$\za_3$} (y2) 
	(x3) edge node[left] {$V_{\zb_{n-1}}$} (y3)
  	(x4) edge node[left] {$V_{\za_n}$} (y3) 
  	(x4) edge node[left] {$V_{\zb_n}$} (y5); 
   \end{tikzpicture}}}
\end{equation}
gives a ``snake matrix'' $M_Q(V)$ containing all the maps in $V$.
Define the \textbf{Zelevinsky map} $\zeta$ by
\begin{equation}\label{eq:zelmap}
\zeta \colon \rep_Q(\bd) \to \mcell, \quad V \mapsto
\begin{pmatrix}
M_Q(V) & \bid_{d_y}\\
\bid_{d_x} & 0 \\
\end{pmatrix} .
\end{equation}
Expanding this out into block form is useful to see what the map does (Figure \ref{fig:bigzimage}).
\begin{figure}
\[\zeta(V) =
\begin{pmatrix}
\begin{tikzpicture}[every node/.style={minimum width=1.5em}]
\matrix (m0) [matrix of math nodes,nodes in empty cells]
{
& & & V_{\za_1}& \bid_{\bd(y_0)} &  & \\
& &  V_{\za_2} & V_{\zb_1}& & \bid_{\bd(y_1)} & \\
%\phantom{X} & \\
& \Ddots & \Ddots\\
V_{\za_n} & V_{\zb_{n-1}} &\\
V_{\zb_n} & & &&\phantom{X} &\phantom{X}&&\bid_{\bd(y_{n})} \\
\bid_{\bd(x_n)} &  & \\
& \bid_{\bd(x_{n-1})} &  & \\
\phantom{X} &  & \\
\phantom{X} &  & \\
& & &\bid_{\bd(x_{1})} \\
};
\node[scale=3] (zero) at (m0-8-2 -| m0-2-6.south east)  {$0$};
\draw[thick] (m0-6-1.north west) -- (m0-6-1.north west -| m0-5-8.south east);%across
\draw[thick] (m0-1-5.north west) -- (m0-1-5.north west |- m0-10-4.south east);%down
\draw[thick,loosely dotted] (m0-7-2) -- (m0-10-4);
\draw[thick,loosely dotted] (m0-2-6) -- (m0-5-8);
%\draw[dashed,thick] (m0-1-4.north west) -- (m0-1-4.north west |- m0-7-4.south west);
%\draw[dashed,thick] (m0-7-1.south west) -- (m0-1-4.north west |- m0-7-4.south west);
\end{tikzpicture}
\end{pmatrix}\]
\caption{Image of Zelevinsky map}\label{fig:bigzimage}
\end{figure}
\comment{%example below
Expanding it out into the natural block form, we have
\begin{equation}
\zeta(V)= \begin{pmatrix}
\begin{tikzpicture}[every node/.style={minimum width=2em,minimum height=1em}]
\matrix (m) [matrix of math nodes, nodes in empty cells]
{
& &  V_{\za_1} &\bid_{\bd(y_0)} &  & \\
& & V_{\zb_1}  && \ddots & \\
& \Ddots&  \Ddots & \\
V_{\za_n} & \\
V_{\zb_n}&& && &&&\bid_{\bd(y_n)} \\\\
\bid_{\bd(x_n)} &  & \\
 &  & \\
 &&\\
&  & &\bid_{\bd(x_1)} \\
};
%\node[scale=3] (zero) at (m0-5-2 -| m0-2-5)  {$0$};
\draw (m-5-1.south west) -- (m-5-8.south east);
%\draw (m0-1-4.north west) -- (m0-6-3.south east);
\end{tikzpicture}
\end{pmatrix}
\end{equation}}%end comment
%We will eventually show that this map translates quiver rank conditions into northwest rank conditions on its image.  
This map is the closed immersion given by the homomorphism of $K$-algebras
\begin{equation}
{\zeta^*}\colon K[\mcell] \to K[\rep_Q(\bd)] 
\end{equation}
with kernel $I_\bd$, the ideal generated by setting appropriate entries of \eqref{eq:cell} to zero.

Notice that matrices in $\mcell$ are naturally partitioned into $2n+1$ blocks of rows and columns.
% gives the columns and rows a natural labeling by vertices of $Q$.  On the other hand, after shifting our viewpoint to minors of northwest submatrices, 
We label these blocks in the standard way by $1, 2, \dotsc, 2n+1$, from top to bottom and left to right.
For a matrix $Z\in \mcell$,
% whether the entries are in $K$ or $K[\rep_Q(\bd)]$, 
denote by $Z_{i\times j}$ the northwest justified submatrix of $Z$ whose southeast corner is the block in block row $i$ and block column $j$.

\begin{lemma}\label{lem:nwbrwelldef}
For any $V\in \rep_Q(\bd)$, the ranks of all $\zeta(V)_{i \times j}$ depend only on the orbit of $V$.
\end{lemma}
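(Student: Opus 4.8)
The plan is to show that each block submatrix $\zeta(V)_{i\times j}$ has rank equal to a constant (depending only on $\bd$ and the indices $i,j$) plus the rank $r_J(V)$ of some matrix $M_J(V)$ associated to an interval $J$ in $Q$ (or, in degenerate ranges of $(i,j)$, a constant independent of $V$ altogether). Since Proposition \ref{prop:rankorbit} tells us the ranks $r_J(V)$ are orbit invariants, this immediately gives the lemma.

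First I would unwind the block structure of $\zeta(V)$ from Figure \ref{fig:bigzimage}. The $2n+1$ row-blocks and column-blocks split into the ``snake'' part coming from $M_Q(V)$ and the two identity strips $\bid_{d_y}$ (in the upper right) and $\bid_{d_x}$ (in the lower left), with a zero block in the bottom right. The key elementary observation is that an identity block contributes its full rank independently of $V$, and can be used to clear the entries in its row-block and column-block via row/column operations (which do not change rank). So for a given $(i,j)$ I would perform these operations on $\zeta(V)_{i\times j}$: the identity blocks of $\bid_{d_y}$ and $\bid_{d_x}$ that lie \emph{entirely inside} the northwest $i\times j$ window each contribute a fixed full-rank summand and allow elimination of everything in their bands, leaving a smaller matrix whose nonzero content is exactly a northwest-justified submatrix of $M_Q(V)$. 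That submatrix is (up to the already-accounted identity contributions) of the form $M_J(V)$ for a suitable interval $J$ — precisely, $J$ is read off from which $\za$-block is its northeast corner and which $\zb$- or $\za$-block is its southwest corner, matching the recipe for $M_J$ in \eqref{eq:M}–\eqref{eq:Megs}. When the window $i\times j$ does not reach far enough to include any snake entries, or conversely is so large that it swallows all of $M_Q(V)$, the rank is a constant not depending on $V$; I would handle these boundary ranges as easy special cases.

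The bookkeeping is the only real work: one must track, for each pair $(i,j)$ in the $2n+1$ by $2n+1$ block grid, exactly which identity blocks are fully contained, which are partially contained (these require a little care — a partial identity block still has a well-defined rank contribution, namely the number of its rows or columns inside the window, and still permits partial elimination), and which interval $J\subseteq Q$ (if any) indexes the residual snake submatrix. I expect the main obstacle to be precisely this case analysis — getting the correspondence $(i,j)\mapsto J$ right in every regime (both endpoints of $J$ of type $x$, both of type $y$, mixed, and the truncations at the quiver's ends) and checking that partially-overlapping identity strips behave as claimed. But conceptually nothing subtle happens: rank is invariant under row and column operations and under deleting a zero block, identity submatrices have rank equal to their size, and $r_J$ is an orbit invariant by Proposition \ref{prop:rankorbit}. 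Assembling these gives, for each $(i,j)$, an expression
\[
\rank \zeta(V)_{i\times j} = c_{i,j} + r_{J(i,j)}(V)
\]
(with $r_{J(i,j)}(V)$ interpreted as $0$ when no interval arises), where $c_{i,j}$ depends only on $\bd$. Since the right-hand side is constant on $\GL(\bd)$-orbits, so is the left-hand side, proving the lemma. I would also note in passing that this computation is exactly what sets up the next step, where these block ranks get repackaged into the ranks of northwest submatrices $Z_{p\times q}$ defining a Kazhdan–Lusztig variety.
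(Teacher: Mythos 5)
Your argument is correct, but it takes a genuinely longer route than the paper's. The paper proves the lemma in one line: if $W=g\cdot V$ for $g\in\GL(\bd)$, then $\zeta(W)$ differs from $\zeta(V)$ only by row and column operations confined to individual block rows and block columns of $\mcell$ (this is visible from Figure \ref{fig:bigzimage}), and such operations cannot change the rank of any northwest block submatrix $\zeta(V)_{i\times j}$. You instead compute each block rank explicitly in the form $c_{i,j}+r_{J(i,j)}(V)$ and then invoke orbit-invariance of the quiver rank functions $r_J$; this works, and the case analysis you describe is essentially the content of Lemmas \ref{lem:cellranks}, \ref{lem:NWSE} and \ref{lem:blockRankConditions} in the appendix, which the paper defers and proves later in order to obtain Proposition \ref{prop:combDataEquiv}. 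So your approach buys more (you would effectively establish the quantitative dictionary between quiver ranks and block ranks along the way) at the cost of the bookkeeping you flag, while the paper's equivariance observation gets the lemma with no computation at all. Two small corrections: the orbit-invariance of $r_J$ is not Proposition \ref{prop:rankorbit} (that proposition is the converse, that the $r_J$ determine the orbit) but the easy remark preceding \eqref{eq:rankfunctions}, namely that $M_J(g\cdot V)$ is obtained from $M_J(V)$ by invertible block-diagonal multiplication on each side; and since each window $\zeta(V)_{i\times j}$ is a union of whole blocks, no identity block is ever only partially contained in a window, so that part of your case analysis is vacuous.
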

\begin{proof}
Using Figure \ref{fig:bigzimage}, we see how the $\GL(\bd)$-action on $\rep_Q(\bd)$ is essentially translated into row and column operations within blocks of $\mcell$.  So it does not change the ranks of block submatrices.
\end{proof}

\begin{definition}\label{def:quivertonwrank}
Let $\br$ be a quiver rank array and $V \in \mathcal{O}_\br$. The \textbf{block rank matrix associated to $\br$} is the $(2n+1)\times (2n+1)$ matrix $\bb(\br)$ defined by \[\bb(\br)_{i,j} = \rank \left(\zeta(V)_{i \times j}\right).\qedhere\]
\end{definition}

The following proposition summarizes the content of Appendix \ref{appendix}.  Its proof is separated from the main body of the paper because it is  somewhat technical and uses different notation than the rest of this section.

\begin{proposition}\label{prop:combDataEquiv}
The combinatorial data in a quiver rank array $\br$ and northwest block rank matrix $\bb(\br)$ are equivalent, in the sense that a matrix $Z \in \mcell$ satisfies
\[
\rank Z_{i \times j} = \bb(\br)_{i,j} \quad \text{for all }i,j
\]
if and only if $Z =\zeta(V)$ for some $V$ satisfying $\br$. 
\end{proposition}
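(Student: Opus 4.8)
The plan is to reduce everything to a single explicit computation --- a ``rank dictionary'' expressing each $\rank\zeta(V)_{i\times j}$ in terms of the quiver ranks $r_J(V)$ --- and then to exploit the many identity blocks present in every matrix of $\mcell$. The forward implication ``$Z=\zeta(V)$ for some $V$ satisfying $\br$ $\implies$ $\rank Z_{i\times j}=\bb(\br)_{i,j}$ for all $i,j$'' is immediate: by Proposition~\ref{prop:rankorbit} a representation satisfying $\br$ lies in $\mathcal{O}_\br$, and then the equality is exactly Definition~\ref{def:quivertonwrank} together with Lemma~\ref{lem:nwbrwelldef}. So the real content is the converse.

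For the converse, the key step is to establish, for every $V\in\rep_Q(\bd)$ and every block position $(i,j)$, a formula of the shape
\[
\rank\zeta(V)_{i\times j}=c_{i,j}+r_{J(i,j)}(V),
\]
where $c_{i,j}$ is an explicit nonnegative integer depending only on $\bd$, where $J(i,j)$ is an interval of $Q$ (with the convention $r_{\emptyset}\equiv 0$ at the positions carrying no quiver data), and where, crucially, the assignment $(i,j)\mapsto J(i,j)$ is surjective onto the intervals of $Q$ having at least one arrow. The idea is to row- and column-reduce $\zeta(V)_{i\times j}$ using the blocks of $\bid_{d_y}$ and $\bid_{d_x}$ that it meets: each such identity block contributes a full set of pivots (this accounts for $c_{i,j}$), and once the rows and columns it spans are cleared, what remains is --- up to a permutation of blocks --- a submatrix of the snake $M_Q(V)$ of the form $M_{J(i,j)}(V)$. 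Carrying this out requires sorting the positions $(i,j)$ into cases according to whether the blocks indexed by $i$ and $j$ lie in the $y$-part or the $x$-part of the partition of $\mcell$, and according to where they fall relative to the two antidiagonals of $M_Q$; this bookkeeping, together with the verification of surjectivity onto intervals, is exactly what is carried out in Appendix~\ref{appendix}. Applying the formula to any $V\in\mathcal{O}_\br$ and invoking Proposition~\ref{prop:rankorbit} also yields the identity $\bb(\br)_{i,j}=c_{i,j}+\br_{J(i,j)}$.

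Granting the dictionary, the converse is formal. One takes $Z\in\mcell$ with $\rank Z_{i\times j}=\bb(\br)_{i,j}$ for all $i,j$ and writes $Z$ in the form \eqref{eq:cell} with free block $\ast$. Running the same reduction on $Z$ gives $\rank Z_{i\times j}=c_{i,j}+\rank(\ast_{i,j})$, where $\ast_{i,j}$ is the block submatrix of $\ast$ occupying the rows and columns left over after clearing the identity pivots. When $Z=\zeta(V)$ we have $\ast=M_Q(V)$, so the entries of $\ast$ visible in $\ast_{i,j}$ are precisely the snake entries forming $M_{J(i,j)}(V)$ when $J(i,j)\ne\emptyset$, and are all zero when $J(i,j)=\emptyset$. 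Hence at each position with $J(i,j)=\emptyset$ the hypothesis forces $\rank(\ast_{i,j})=\bb(\br)_{i,j}-c_{i,j}=0$, i.e.\ it forces the corresponding off-snake entries of $\ast$ to vanish; since these positions cover all off-snake entries --- equivalently, the off-snake coordinate functions generating $I_{\bd}$ appear, after Laplace expansion along the identity blocks, as size-$(1+\bb(\br)_{i,j})$ minors of the relevant $Z_{i\times j}$ --- one concludes $\ast=M_Q(V)$ for a unique $V\in\rep_Q(\bd)$; that is, $Z=\zeta(V)$. Substituting this $V$ into the dictionary, the remaining conditions become $r_{J(i,j)}(V)=\br_{J(i,j)}$ for all $i,j$, and surjectivity of $(i,j)\mapsto J(i,j)$ upgrades this to $r_J(V)=\br_J$ for every interval $J$; thus $V$ satisfies $\br$, as desired.

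The main obstacle is the dictionary itself. The shape of $\zeta(V)_{i\times j}$, and hence the identification of $c_{i,j}$ and $J(i,j)$, depends delicately on the types of the blocks indexed by $i$ and $j$ and on where they sit relative to the two antidiagonals of $M_Q$, so the argument splits into several cases; one must also verify that every interval with at least one arrow is realized as some $J(i,j)$ (so that no quiver rank is lost) and that the off-snake coordinates really do occur among the minors cut out by the block rank conditions. Once this combinatorial groundwork is laid, both implications of the proposition follow from the short arguments above --- which is why the detailed verification is deferred to the appendix.
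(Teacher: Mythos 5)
Your proposal is correct and follows essentially the same route as the paper: your ``rank dictionary'' $\rank\zeta(V)_{i\times j}=c_{i,j}+r_{J(i,j)}(V)$, obtained by clearing rows and columns with the identity blocks, is exactly the content of the paper's Appendix~\ref{appendix} (Lemmas \ref{lem:cellranks}, \ref{lem:NWSE}, \ref{lem:blockRankConditions}: the positions with $J(i,j)=\emptyset$ are the cell and image conditions, and the observation that those rank conditions force the off-snake entries to vanish is precisely Lemma~\ref{lem:NWSE}), with the nonempty-interval positions giving the orbit conditions and surjectivity onto intervals. The only difference is organizational --- you package the three lemmas into one uniform formula --- so the argument matches the paper's proof in substance.
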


%\begin{corollary}\label{cor:combDataEquiv2}
%The representations $V,W\in \rep_Q(\bd)$ lie in the same $\GL(\bd)$-orbit if and only if $\rank \left(\zeta(V)_{i \times j}\right) = \rank \left(\zeta(W)_{i \times j}\right)$ for all $1\leq i,j\leq 2n+1$. 
%\end{corollary}

%\begin{proof}
%This is immediate by Proposition \ref{prop:combDataEquiv} and the fact that two representations lie in the same orbit if and only if they have the same quiver rank array (see Proposition \ref{prop:rankorbit}).
%\end{proof}

The following example illustrates the essential features of how the Zelevinsky map converts a quiver rank array to a northwest block rank matrix.

\begin{example}\label{ex:quivertoblockrank}
Let $Q$ be the quiver 
\begin{equation}
Q=\quad
\vcenter{\hbox{\begin{tikzpicture}[point/.style={shape=circle,fill=black,scale=.5pt,outer sep=3pt},>=latex]
   \node[outer sep=-2pt] (y0) at (-1,0) {${y_0}$};
   \node[outer sep=-2pt] (x1) at (0,1) {${x_1}$};
  \node[outer sep=-2pt] (y1) at (1,0) {${y_1}$};
   \node[outer sep=-2pt] (x2) at (2,1) {${x_2}$};
  \node[outer sep=-2pt] (y2) at (3,0) {${y_2}$};
   \node[outer sep=-2pt] (x3) at (4,1) {${x_3}$};
  \node[outer sep=-2pt] (y3) at (5,0) {${y_3}$};
%  \node[outer sep=-2pt] (x4) at (6,1) {$V_{x_4}$};
  \path[->]
  	(x1) edge node[left] {${\za_1}$} (y0) 
	(x1) edge node[left] {${\zb_1}$} (y1)
  	(x2) edge node[left] {${\za_2}$} (y1) 
	(x2) edge node[left] {${\zb_2}$} (y2)
  	(x3) edge node[left] {${\za_3}$} (y2)
	(x3) edge node[left] {${\zb_3}$} (y3);
 % 	(x4) edge node[left] {$\za_4$} (y3); 
   \end{tikzpicture}}}
% \cdots
\end{equation}
and let $\bd$ be a dimension vector. Then for any $V\in\rep_Q(\bd)$, we have that $\zeta(V)$ is a block matrix of the form %\ryan{dashed lines?}
\[
\left(\begin{array}{ccc|cccc}
\color{red}0&\color{red}0&V_{\za_1}&\color{blue}I_{\bd(y_0)}&\color{blue}0&\color{blue}0&\color{blue}0\\
\color{red}0&V_{\za_2}&V_{\zb_1}&\color{blue}0&\color{blue}I_{\bd(y_1)}&\color{blue}0&\color{blue}0\\
V_{\za_3}&V_{\zb_2}&\color{red}0&\color{blue}0&\color{blue}0&\color{blue}I_{\bd(y_2)}&\color{blue}0\\
V_{\zb_3}&\color{red}0&\color{red}0&\color{blue}0&\color{blue}0&\color{blue}0&\color{blue}I_{\bd(y_3)}\\
\hline
\color{blue}I_{\bd(x_3)}&\color{blue}0&\color{blue}0&\color{blue}0&\color{blue}0&\color{blue}0&\color{blue}0\\
\color{blue}0&\color{blue}I_{\bd(x_2)}&\color{blue}0&\color{blue}0&\color{blue}0&\color{blue}0&\color{blue}0\\
\color{blue}0&\color{blue}0&\color{blue}I_{\bd(x_1)}&\color{blue}0&\color{blue}0&\color{blue}0&\color{blue}0\\
\end{array}
\right) .
\]
Suppose that $\bd = (1,2,3,2,3,2,1)$. Then this determines that $\bb(\br)$ must have the form
\[
\left(\begin{array}{ccc|cccc}
\color{red}{0} & \color{red}0& *&\color{blue}1&\color{blue}1&\color{blue}1&\color{blue}1\\
\color{red}0 & * &*&*&\color{blue}4&\color{blue}4&\color{blue}4\\
* &*&*&*&*&\color{blue}7&\color{blue}7\\
* &*&*&*&*&*&\color{blue}8\\
\hline
\color{blue}2&*&*&*&*&\color{red}9&\color{blue}10\\
\color{blue}2&\color{blue}4&*&*&\color{red}8&\color{red}11&\color{blue}12\\
\color{blue}2&\color{blue}4&\color{blue}6&\color{blue}7&\color{blue}10&\color{blue}13&\color{blue}14
\end{array}\right).
\]
%\ryan{do we know which *s allowed?}
The blue values come from $Z$ being an element of the cell $\mcell$ (cf. ``cell conditions'' Lemma \ref{lem:cellranks}).
The red values ensure zero blocks in the northwest quadrant so that $Z$ is in the image of $\zeta$ (cf. ``image conditions'' Lemma \ref{lem:NWSE}). The entries labelled $*$ are determined by $\br$, and this data is equivalent to specifying an orbit (cf. ``orbit conditions'' Lemma \ref{lem:blockRankConditions}). For example, consider the orbit of the representation $V$ with \[V_{\alpha_1} = \begin{pmatrix}1&0\end{pmatrix},~~V_{\beta_1} = \begin{pmatrix}0&1\\0&0\\1&0\end{pmatrix},~~ V_{\alpha_2} = \begin{pmatrix}0&1\\0&0\\0&0\end{pmatrix},\] \[V_{\beta_2} =\begin{pmatrix}1&0\\0&1\\0&0\end{pmatrix},~~V_{\alpha_3} = \begin{pmatrix}0&0\\0&0\\0&0\end{pmatrix},~~ V_{\beta_3} = \begin{pmatrix}0&1\end{pmatrix}.\]
Plugging this in above, the associated block rank matrix is calculated to be
\[\bb(\br) = 
\left(\begin{array}{ccc|cccc}
\color{red}{0} & \color{red}0& 1 &\color{blue}1&\color{blue}1&\color{blue}1&\color{blue}1\\
\color{red}0 &1 &2&3&\color{blue}4&\color{blue}4&\color{blue}4\\
0 &2&4&5&6&\color{blue}7&\color{blue}7\\
1 &3&5&6&7&8&\color{blue}8\\
\hline
\color{blue}2&4&6&7&8&\color{red}9&\color{blue}10\\
\color{blue}2&\color{blue}4&6&7&\color{red}8&\color{red}11&\color{blue}12\\
\color{blue}2&\color{blue}4&\color{blue}6&\color{blue}7&\color{blue}10&\color{blue}13&\color{blue}14
\end{array}\right).
\]
The quiver rank array $\br$ can be recovered from the black entries of this matrix by the formulas of Lemma \ref{lem:blockRankConditions}.
%The black entries \ryan{in the dot region} are determined by \ryan{whereever we put things} (i.e. those which depend on $\br$) to conditions (I1) through (I4). 
\end{example}

%We now define the \textbf{northwest block rank variety} and show that it is isomorphic to $\zO_{\br}$. %
Any block rank matrix naturally determines a closed subscheme of $\mcell$.

\begin{definition}\label{def:NWBlockRankVariety}
Let $\br$ be a quiver rank array and 
\[
Z = \begin{pmatrix}* & \bid_{d_y}\\\bid_{d_x} & 0 \end{pmatrix}
\]
a generic matrix of $\mcell$.   Define the \textbf{northwest block rank ideal} to be the ideal 
\[
I_{\bb(\br)}:= \langle \text{minors of size } (\bb(\br)_{i,j}+1) \textrm{ in } Z_{i \times j}~|~1\leq i, j\leq 2n+1\rangle.
\]
Define the \textbf{northwest block rank variety} to be $NW_{\bb(\br)} := \Spec \left( K[\mcell]/I_{\bb(\br)}\right)$.
%\end{enumerate}
\end{definition}

\begin{proposition}\label{prop:quivertopositroid}
Let $\br$ be a quiver rank array. The Zelevinsky map $\zeta$ restricts to a scheme-theoretic isomorphism from $\zO_\br$ to $NW_{\bb(\br)}$ (i.e., $(\zeta^*)^{-1}(I_\br) = I_{\bb(\br)}$). %Furthermore, $\zeta$ restricts to an isomorphism from the orbit $\mathcal{O}_{\br}$ to the northwest block rank cell $B_{\bb(\br)}^\circ$. 
\end{proposition}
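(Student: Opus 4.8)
The plan is to leverage that $\zeta$ is a closed immersion, so that $\zeta^*\colon K[\mcell]\to K[\rep_Q(\bd)]$ is surjective with kernel $I_\bd$. Since $(\zeta^*)^{-1}(I_\br)$ automatically contains $\ker\zeta^*=I_\bd$, and pullback along a surjection sets up a bijection between the ideals of $K[\rep_Q(\bd)]$ and the ideals of $K[\mcell]$ that contain $I_\bd$, the asserted equality $(\zeta^*)^{-1}(I_\br)=I_{\bb(\br)}$ is equivalent to the two statements \textup{(i)}~$I_\bd\subseteq I_{\bb(\br)}$ and \textup{(ii)}~$\zeta^*(I_{\bb(\br)})=I_\br$. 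I would prove \textup{(i)}, together with each of the two inclusions comprising \textup{(ii)}, separately.

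The engine for all three is a block-by-block reduction. Let $Z\in\mcell$ be the generic matrix. For each $1\le i,j\le 2n+1$, the northwest block submatrix $Z_{i\times j}$ can be transformed, by elementary row and column operations \emph{with coefficients in $K$} that use the identity blocks $\bid_{\bd(y_k)}$ and $\bid_{\bd(x_k)}$ appearing inside $Z_{i\times j}$ as pivots, into block-diagonal form $\left(\begin{smallmatrix}\bid_{s_{i,j}}&0\\0&\widetilde M_{i,j}\end{smallmatrix}\right)$, where $s_{i,j}$ is the total size of the identity pivots used and $\widetilde M_{i,j}$ is, after deleting its zero rows and columns, a rectangular window of the generic $*$-block of $Z$; consequently $\zeta^*(\widetilde M_{i,j})=M_{J(i,j)}$ up to zero rows and columns, for a well-defined interval $J(i,j)\subseteq Q$. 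Identifying $J(i,j)$ and computing $s_{i,j}$ is the case analysis deferred to the Appendix (Lemmas~\ref{lem:cellranks}, \ref{lem:NWSE}, \ref{lem:blockRankConditions}), and evaluating on $V\in\mathcal{O}_\br$ gives $\bb(\br)_{i,j}=s_{i,j}+\br_{J(i,j)}$. Because the ideal of size-$t$ minors of a matrix is unchanged under left or right multiplication by an invertible matrix over $K$, and under inserting or deleting zero rows and columns, this reduction yields the identity of ideals in $K[\mcell]$
\[
I_{\bb(\br)_{i,j}+1}\bigl(Z_{i\times j}\bigr)=I_{\br_{J(i,j)}+1}\bigl(\widetilde M_{i,j}\bigr).
\]

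Granting this, the inclusion $\zeta^*(I_{\bb(\br)})\subseteq I_\br$ is immediate: applying $\zeta^*$ to the displayed identity turns the right-hand side into $I_{\br_{J(i,j)}+1}(M_{J(i,j)})\subseteq I_\br$, and these ideals generate $\zeta^*(I_{\bb(\br)})$ as $(i,j)$ varies. For the reverse inclusion I would verify that every interval $J\subseteq Q$ containing at least one arrow arises as $J(i,j)$ for a suitable $(i,j)$ — single-vertex intervals need not be considered, since $M_{\{v\}}$ has no columns and contributes no generators to $I_\br$. Concretely, an interval whose leftmost arrow is of $\alpha$-type is realized by a northwest submatrix confined to the $y$-row and $x$-column blocks, whereas a $\beta$-type leftmost arrow forces one to extend into the $y$-column blocks, where the relevant $\bid_{\bd(y_k)}$ pivots live, and symmetrically for the rightmost arrow and the $x$-row blocks. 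Finally, for \textup{(i)} the generators of $I_\bd$ are the entries of the off-snake blocks of the $*$-region: a block lying northwest of the snake is contained in some $Z_{i\times j}$ with $\bb(\br)_{i,j}=0$, so its entries are $1\times1$ minors belonging to $I_{\bb(\br)}$; a block lying southeast of the snake survives the elementary reduction of a suitable $Z_{p\times q}$ with $(p,q)$ in the southeast block range, so that the condition $\rank Z_{p\times q}\le\bb(\br)_{p,q}$ forces that block to vanish. This last point is the ``image conditions'' computation of Lemma~\ref{lem:NWSE}.

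I expect the main obstacle to be exactly the bookkeeping of the assignment $(i,j)\mapsto(J(i,j),s_{i,j})$: checking that the elementary reduction always collapses $Z_{i\times j}$ onto a \emph{single} window $M_{J(i,j)}$ and an identity block, that this assignment surjects onto the intervals with an arrow, and that each southeast off-snake block is annihilated by some $\bb(\br)_{p,q}$-condition. This is a finite but somewhat intricate case analysis on the source/sink types of the interval endpoints and on the position of $(i,j)$ relative to the four ``quadrants'' of the Zelevinsky matrix, which is why it is relegated to the Appendix. A secondary point demanding care is that the whole argument must be carried out at the level of ideals rather than of underlying reduced subschemes; this is why it rests on the invariance of minor ideals under $K$-invertible transformations, rather than merely on the set-level Proposition~\ref{prop:combDataEquiv}.
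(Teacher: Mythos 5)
Your proposal is correct and takes essentially the same route as the paper: both carry out the Appendix's cell/image/orbit trichotomy at the level of ideals of minors, pivoting on the identity blocks so that the image conditions account exactly for $\ker \zeta^* = I_\bd$ and the orbit conditions match the generators of $I_\br$ block by block; your reduction to the two statements (i) and (ii) via the ideal correspondence under the surjection $\zeta^*$ is just a cleaner formalization of the paper's ``working modulo $I_\bd$.'' One small wording fix: the clearing operations use the generic entries themselves as coefficients, so they are unipotent transformations over $K[\mcell]$ rather than over $K$ --- which is all that invariance of determinantal ideals requires.
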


\begin{proof}
The proof proceeds essentially along the same lines as Appendix \ref{appendix}, but working with minors in matrices of variables instead of ranks of matrices of scalars.  It is seen there that the generators of $I_{\bb(\br)}$ coming from a block rank matrix  have 3 types.

%Let $V\in \rep_Q(\bd)$ satisfy $\br$ so that it has block rank matrix $\bb(\br)$. As discussed in Remark \ref{rem:restOfTheRanks}, either an entry of $\bb(\br)$ is maximal (and so contributes no generators to the ideal $I_{\bb(\br)}$), or it is in a position $(i,j)$ with the property that $\rank \zeta(V)_{r_{z_i}\times c_{z_j}}$ satisfies one of conditions (NW), (SE), or (I1) through (I4). 

Those coming from a ``cell condition'' entry of $\bb(\br)$ (see Lemma \ref{lem:cellranks}, or blue entries in Example \ref{ex:quivertoblockrank}) give minors of \eqref{eq:cell} which are identically zero, because the sizes of these minors are larger than the corresponding submatrix.

The ``image condition'' type entries in $\bb(\br)$ (see Lemma \ref{lem:NWSE}, or red entries in Example \ref{ex:quivertoblockrank}) gives minors that cut $NW_{\bb(\br)}$ down to lie in the scheme-theoretic image of $\zeta$.
This collection of minors 
%of the generic matrix $Z$ associated to conditions (NW) and (SE) 
generates the kernel $I_\bd$ of the induced map on coordinate rings.
%\[\zeta^*: K[\mcell]\rightarrow K[\rep_Q(\bd)].\]

Thus, working modulo $I_\bd$, we may assume that all remaining generators of $I_{\bb(\br)}$ are minors of %appropriate northwest justified submatrices of    
\begin{equation}
\tilde{Z} =
\begin{pmatrix}
\begin{tikzpicture}[every node/.style={minimum width=1.5em}]
\matrix (m0) [matrix of math nodes,nodes in empty cells]
{
& & & A_1& \bid_{\bd(y_0)} &  & \\
& &  A_2 & B_1& & \bid_{\bd(y_1)} & \\
%\phantom{X} & \\
& \Ddots & \Ddots\\
A_n & B_{n-1} &\\
B_n & & &&\phantom{X} &\phantom{X}&&\bid_{\bd(y_{n})} \\
\bid_{\bd(x_n)} &  & \\
& \bid_{\bd(x_{n-1})} &  & \\
\phantom{X} &  & \\
\phantom{X} &  & \\
& & &\bid_{\bd(x_{1})} \\
};
\node[scale=3] (zero) at (m0-8-2 -| m0-2-6.south east)  {$0$};
\draw[thick] (m0-6-1.north west) -- (m0-6-1.north west -| m0-5-8.south east);%across
\draw[thick] (m0-1-5.north west) -- (m0-1-5.north west |- m0-10-4.south east);%down
\draw[thick,loosely dotted] (m0-7-2) -- (m0-10-4);
\draw[thick,loosely dotted] (m0-2-6) -- (m0-5-8);
%\draw[dashed,thick] (m0-1-4.north west) -- (m0-1-4.north west |- m0-7-4.south west);
%\draw[dashed,thick] (m0-7-1.south west) -- (m0-1-4.north west |- m0-7-4.south west);
\end{tikzpicture}
\end{pmatrix}
\end{equation}
where, as usual, there is a zero in every blank location.

The remaining generators come from the ``orbit conditions'' (see Lemma \ref{lem:blockRankConditions}, or black entries in Example \ref{ex:quivertoblockrank}). % which break up into four cases. %,  depending on which of the four quadrants of $\tilde{Z}$ the southeast corner of the submatrix under consideration lies in.
For each block position $(i,j)$, the lemma associates a certain interval $J \subseteq Q$, and all such $J$ arise this way.
Then one checks using the same linear algebra discussed in the proof of Lemma \ref{lem:blockRankConditions} that $\zeta^*$ induces a bijection between the prescribed size minors of $\tilde{Z}_{i\times j}$ and the minors of another size in $M_J$, which are the generators of $I_{\br}$. 
This bijection, taken over all $(i,j)$, gives that $(\zeta^*)^{-1}(I_{\br}) = I_{\bb(\br)}$.
\end{proof}
%%%%%%%%%%%%%%%%%%%%%%%%%%%%%%%%%%%%%%%%%%%%%%
%%%%%%%%%%%%%%%%%%%%%%%%%%%%%%%%%%%%%%%%%%%%%%

%%%%%%%%%%%%%%%%%%%%%%%%%%%%%%%%%%%%%%%%%%%%%%
%%%%%%%%%%%%%%%%%%%%%%%%%%%%%%%%%%%%%%%%%%%%%%

\subsection{A Zelevinsky permutation for the bipartite setting}\label{sect:zelPerm}
In this section we follow ideas similar to those in \cite{KMS} to construct the permutation $v(\br)$ representing the Kazhdan-Lusztig variety with which we ultimately identify $\zO_\br$.
%In this subsection we follow ideas similar to those in \cite{KMS} and define a ``bipartite Zelevinsky permutation'' $v(\br)$ for each quiver rank array $\br$. This will allow us to show that the orbit closure $\overline{\mathcal{O}_{\br}}$ is isomorphic to the Kazhdan-Lusztig variety $X_{v(\br)}\cap \cell$.

%let $\bb(\br)$ be the associated block rank matrix, and let $\zeta:\rep_Q(\bd)\rightarrow \mcell$ be the Zelevinsky map. Following ideas similar to those in \cite{KMS}, we will show that $\zeta(\zO_{\br})$ is isomorphic to a specific Kazhdan-Lusztig variety. 

\begin{proposition}\label{prop:zperm}
Let $\br$ be a quiver rank array and let $\bb(\br)$ be the associated block rank matrix of Definition \ref{def:quivertonwrank}. Then there exists a unique $d\times d$ permutation matrix $v(\br)$ that satisfies the following conditions:
\begin{enumerate}
\item the number of 1s in block $(i,j)$ of $v(\br)$ is equal to 
\[
%\begin{split}
%&\bb(\br)_{(i,j)}-\#1s \text{ in } v(\br) \text{ weakly NW of block } (i,j)\\
%&
\bb(\br)_{i,j} + \bb(\br)_{i-1,j-1} - \bb(\br)_{i,j-1} - \bb(\br)_{i-1,j}
%\end{split}
\]
where $\bb(\br)_{i,j}=0$ if $i$ or $j$ is outside of the range $[1,2n+1]$;
\item the 1s are arranged from northwest to southeast across each block row;
\item the 1s are arranged from northwest to southeast down each block column.
\end{enumerate}
\end{proposition}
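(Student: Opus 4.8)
The plan is to construct $v(\br)$ block by block and then verify uniqueness. First I would observe that conditions (2) and (3) mean that once we know how many $1$s appear in each block $(i,j)$, the arrangement of those $1$s \emph{within} a block is not yet forced — but the interaction across blocks will be. The cleanest approach is to show that the prescribed block-$1$-counts
\[
m_{i,j} := \bb(\br)_{i,j} + \bb(\br)_{i-1,j-1} - \bb(\br)_{i,j-1} - \bb(\br)_{i-1,j}
\]
are nonnegative integers, that each block row of counts sums to the correct row-block size $r_i$, and each block column sums to the correct column-block size $c_j$; this is exactly the statement that $(m_{i,j})$ is the "block degree matrix" of some permutation matrix respecting the given block structure. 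The nonnegativity is the key inequality: it says $\bb(\br)$ is a "monotone, supermodular" array, i.e. $\bb(\br)_{i,j} - \bb(\br)_{i,j-1} \geq \bb(\br)_{i-1,j} - \bb(\br)_{i-1,j-1}$, which holds because rank of a northwest submatrix can increase by at most the rank gained in the newly added rows, and this gain is itself monotone in the number of columns. (Concretely: for a fixed matrix, $\rank Z_{i\times j} - \rank Z_{i\times(j-1)}$ counts columns of block-column $j$ independent modulo the span of the first $j-1$ block-columns together with rows $\le i$; adding more rows can only enlarge that span, hence can only decrease this count — giving the supermodularity with the sign as stated.) The row/column sum identities follow from the telescoping: $\sum_j m_{i,j} = \bb(\br)_{i,2n+1} - \bb(\br)_{i-1,2n+1} = r_i$ since $Z_{i \times (2n+1)}$ has full row rank (it contains an identity block once all columns are included), and symmetrically for columns.

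Next, given that $(m_{i,j})$ is a nonnegative integer matrix with the correct margins, I would build the permutation matrix greedily in the unique way forced by (2) and (3). Process block rows from top to bottom; within block row $i$, the $1$s must occupy $m_{i,j}$ of the columns of block-column $j$, and condition (3) forces them to be the \emph{topmost} still-available rows of each block-column, while condition (2) forces them to be read left-to-right across the block row. I would make this precise by an induction: assuming the first $i-1$ block rows have been filled and that in block-column $j$ exactly $\sum_{i' < i} m_{i',j}$ rows (the top ones) are used, the supermodularity guarantees $m_{i,j} \le c_j - \sum_{i'<i} m_{i',j}$ so there is room, and placing the $1$s in the next-available rows, ordered across the block row so the global permutation is increasing where forced, is the unique completion. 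Finally I would record that $v(\br) \in W^P$: the minimal-length coset representative condition is precisely that $v(\br)$ is increasing within each \emph{diagonal} block of the parabolic, which the construction arranges automatically since all $1$s in a given parabolic block row/column are pushed to be as "northwest as possible."

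The main obstacle I expect is the bookkeeping for uniqueness — showing that conditions (2) and (3) really do pin down a unique permutation matrix and not merely the block counts. The subtlety is that "northwest to southeast across a block row" and "northwest to southeast down a block column" are a priori two separate total-order conditions, and one must check they are simultaneously satisfiable and jointly rigid; the resolution is that the $1$s of block row $i$ lying in block column $j$ all sit in a contiguous range of rows and a contiguous range of columns (the "staircase" shape), so the two conditions cut out a single anti-diagonal-like placement in each block with no freedom left. I would isolate this as a small lemma: a $0/1$ matrix with exactly one $1$ per row and column, in which the $1$s are increasing along every row-block and every column-block, is determined by its block-count matrix $(m_{i,j})$. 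Granting that lemma and the inequalities above, existence and uniqueness both follow, and the membership $v(\br)\in W^P$ is immediate from the explicit northwest-justified placement.
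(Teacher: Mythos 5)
Your proposal follows essentially the same route as the paper's proof: the paper verifies only the telescoping row- and column-sum identities (the prescribed counts in block row $i$ sum to $\bb(\br)_{i,2n+1}-\bb(\br)_{i-1,2n+1}$, the height of that block row, and similarly for columns), and then observes that conditions (2) and (3) rigidly determine the placement of the 1s within each block — exactly your existence-plus-uniqueness scheme, which you simply spell out in more detail. You additionally check nonnegativity of the block counts $m_{i,j}$, which the paper leaves implicit (it holds because $\bb(\br)$ is by definition the block rank array of the actual matrix $\zeta(V)$). One caution on that step: as worded, your justification of the supermodularity points the wrong way — you say that adding more rows can only decrease the incremental gain $\rank Z_{i\times j}-\rank Z_{i\times(j-1)}$, which would give $m_{i,j}\le 0$ rather than $m_{i,j}\ge 0$. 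The correct statement is that \emph{deleting} rows can only decrease this gain: if $\pi$ is the projection forgetting block row $i$ and $V_{j-1}\subseteq V_j$ denote the spans of the first $j-1$, resp.\ $j$, block columns, then $\dim\pi(V_j)-\dim\pi(V_{j-1})\le\dim V_j-\dim V_{j-1}$ since $\dim(V_j\cap\ker\pi)\ge\dim(V_{j-1}\cap\ker\pi)$. With that sentence repaired the argument is complete; your closing remark that $v(\br)\in W^P$ is not needed for this proposition (it is part (1) of Lemma \ref{lem:minLength} in the paper).
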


In analogy with the equioriented setting,
%(see \cite{KMS}), 
we call the permutation $v(\br)$ the \textbf{Zelevinsky permutation} associated to $\br$.  Before the proof, we continue Example \ref{ex:quivertoblockrank}.

\begin{example}
One constructs $v(\br)$ from $\bb(\br)$ by working from the northwest, corner filling in each block, moving either down or across.  The Zelevinsky permutation associated to the $\br$ in Example \ref{ex:quivertoblockrank} is below, where the empty blocks contain all zeros.
\begin{equation}
v(\br) = \left(\begin{array}{cc|cc|cc|c|ccc|ccc|c}
&&& & 1 & 0 &&&&&&&&\\
\hline
& & 1 & 0& & & 0 & 0 & 0 & 0 &&&&\\
& & 0 & 0& & & 1 & 0 & 0 & 0 &&&&\\
& & 0 & 0& & & 0 & 1 & 0 & 0 &&&&\\
\hline
& & 0 & 1& 0 & 0 & &&& & 0 & 0 & 0 & \\
& & 0 & 0& 0 & 1 & &&& & 0 & 0 & 0 & \\
& & 0 & 0& 0 & 0 & &&& & 1 & 0 & 0 & \\
\hline
1 & 0 & &&&&&&&&&&&\\
\hline
0 & 1 & &&&&&&&&&&& 0\\
0 & 0 & &&&&&&&&&&& 1\\
\hline
& & &&&&&&& & 0 & 1 & 0 & \\
& & &&&&&&& & 0 & 0 & 1 & \\
\hline
& & &&&& & 0 & 1 & 0 & &&&\\
& & &&&& & 0 & 0 & 1 & &&&\\
\end{array}\right)
\end{equation}
\end{example}
\comment{
v(\br) = \left(\begin{array}{cc|cc|cc|c|ccc|ccc|c}
0 & 0 & 0 & 0& 1 & 0 & 0 & 0 & 0 & 0 & 0 & 0 & 0 & 0\\
\hline
0 & 0 & 1 & 0& 0 & 0 & 0 & 0 & 0 & 0 & 0 & 0 & 0 & 0\\
0 & 0 & 0 & 0& 0 & 0 & 1 & 0 & 0 & 0 & 0 & 0 & 0 & 0\\
0 & 0 & 0 & 0& 0 & 0 & 0 & 1 & 0 & 0 & 0 & 0 & 0 & 0\\
\hline
0 & 0 & 0 & 1& 0 & 0 & 0 & 0 & 0 & 0 & 0 & 0 & 0 & 0\\
0 & 0 & 0 & 0& 0 & 1 & 0 & 0 & 0 & 0 & 0 & 0 & 0 & 0\\
0 & 0 & 0 & 0& 0 & 0 & 0 & 0 & 0 & 0 & 1 & 0 & 0 & 0\\
\hline
1 & 0 & 0 & 0& 0 & 0 & 0 & 0 & 0 & 0 & 0 & 0 & 0 & 0\\
\hline
0 & 1 & 0 & 0& 0 & 0 & 0 & 0 & 0 & 0 & 0 & 0 & 0 & 0\\
0 & 0 & 0 & 0& 0 & 0 & 0 & 0 & 0 & 0 & 0 & 0 & 0 & 1\\
\hline
0 & 0 & 0 & 0& 0 & 0 & 0 & 0 & 0 & 0 & 0 & 1 & 0 & 0\\
0 & 0 & 0 & 0& 0 & 0 & 0 & 0 & 0 & 0 & 0 & 0 & 1 & 0\\
\hline
0 & 0 & 0 & 0& 0 & 0 & 0 & 0 & 1 & 0 & 0 & 0 & 0 & 0\\
0 & 0 & 0 & 0& 0 & 0 & 0 & 0 & 0 & 1 & 0 & 0 & 0 & 0\\
\end{array}\right) 
}

\begin{proof}[Proof of Proposition \ref{prop:zperm}]
We must show that the number of $1$s in a given block row (respectively block column) is equal to the height of that block row (resp. width of that block column). This is all that is needed to prove the proposition, since conditions (2) and (3) determine a unique arrangement of the $1$s within each block. 

From condition (1), the number of $1$s in block row $i$ is 
\[
\begin{split}
& \sum_{j=1}^{2n+1}\left(\bb(\br)_{i,j}+\bb(\br)_{i-1,j-1}-\bb(\br)_{i,j-1}-\bb(\br)_{i-1,j}\right)=\\
&\bb(\br)_{i,2n+1}-\bb(\br)_{i-1,2n+1} = \text{height of block } i.
\end{split}
\]
The computation for columns is completely analogous.
\end{proof}

We record some useful properties of the Zelevinsky permutation in the following lemma.

\begin{lemma}\label{cor:essBox}\label{lem:length}\label{lem:minLength}
For any quiver rank array $\br$, the following hold.
\begin{enumerate}
\item The Zelevinsky permutation $v(\br)$ is the minimal length element in its $(W_P,W_P)$-double coset. 

\item Every essential box in $\mathcal{E}ss(v(\br))$ occurs in the southeast corner of a block. 

\item The Zelevinsky permutation $v(\br)$ has length \[\displaystyle\sum_{i=2}^{2n+1}\displaystyle\sum_{j=1}^{2n}(\bb(\br)_{i-1,n+1}-\bb(\br)_{i-1,j})(\bb(\br)_{i,j}+\bb(\br)_{i-1,j-1}-\bb(\br)_{i,j-1}-\bb(\br)_{i-1,j}).\]
\end{enumerate}
\end{lemma}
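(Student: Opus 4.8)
The plan is to verify the three claims in turn, treating (1) as the conceptual crux and (2)–(3) as combinatorial consequences of the explicit northwest-to-southeast filling rule in Proposition \ref{prop:zperm}.

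For part (1), I would recall the standard characterization of the minimal-length representative in a $(W_P, W_P)$-double coset: a permutation $v$ is minimal in $W_P v W_P$ if and only if, within each block row of $v$ (with respect to the block structure coming from $P$), the positions of the $1$s increase from left to right, and within each block column the positions of the $1$s increase from top to bottom. (This is the two-sided analogue of the coset description used for $W^P$ in Section \ref{sect:schubert}; see for instance \cite[\S2.4]{MR2133266}.) But this is \emph{exactly} what conditions (2) and (3) of Proposition \ref{prop:zperm} assert: the $1$s in each block run from northwest to southeast across block rows and down block columns, and within a single block the arrangement is forced to be the ``staircase'' which again increases left-to-right and top-to-bottom globally. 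So $v(\br)$ is visibly block-row-increasing and block-column-increasing, hence minimal in its double coset. The only point requiring care is to confirm that the \emph{inter}-block monotonicity (not just the intra-block monotonicity) holds, i.e. that the last $1$ in block column $j$ sits above the first $1$ in block column $j+1$ within a fixed block row; this follows from the nonnegativity of the block-count formula in Proposition \ref{prop:zperm}(1) together with the fact that partial sums of those counts recover the monotone block rank numbers $\bb(\br)_{i,\bullet}$, as computed in the proof of that proposition.

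For part (2), recall that the essential set $\mathcal{E}ss(v)$ consists of the southeast-most boxes of the connected components of the Rothe diagram of $v$. Because $v(\br)$ is constant-on-blocks in the sense that the number of $1$s in block $(i,j)$ depends only on $(i,j)$ and the $1$s are packed into the northwest corner of each block, the rank function $(p,q) \mapsto \rank v(\br)_{p \times q}$ is ``block-affine'': as $(p,q)$ ranges over the interior of a fixed block, $\rank v(\br)_{p\times q}$ increases by exactly $1$ for each unit step that crosses one of the packed $1$s and is otherwise locally constant. I would show that the diagram of $v(\br)$, restricted to a given block, is a rectangle (possibly empty) occupying the southeast portion of that block, so that its unique southeast corner — the only candidate for an essential box contributed by that block — lies in the southeast corner of the block. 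The key computation is that for $(p,q)$ strictly inside block $(i,j)$, one has $(p,q)$ in the diagram iff there is no $1$ weakly northwest in the same block row and no $1$ weakly north in the same block column, and the packing of $1$s into northwest corners makes this region a southeast-justified rectangle within the block. Hence every essential box occurs at a block's southeast corner, as claimed; combined with Fulton's essential-set description of $I_v$ from Section \ref{sect:schubert}, this is what makes the identification with $NW_{\bb(\br)}$ in Section \ref{sect:OCtoKL} possible.

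For part (3), I would use $l(v(\br)) = \#(\text{diagram of } v(\br))$ and count diagram boxes block by block. For the pair of block indices $(i,j)$ with $2 \le i$ and $j \le 2n$, the number of diagram boxes lying in block row $i$ and block column $j$ is (number of $1$s in block $(i,j)$ of $v(\br)$) times (number of $1$s lying strictly to the northeast, i.e. in strictly higher block rows and the same block column — equivalently, in block rows $1,\dots,i-1$ within block column $j$, but counted via the columns actually blocked, which is where the factor $\bb(\br)_{i-1,n+1} - \bb(\br)_{i-1,j}$ enters). More precisely: a diagram box in block $(i,j)$ sits in a column whose $1$ (within that block) is there, and it is a diagram box precisely when the $1$ in its row lies in a strictly later block column and the $1$ in its column lies in a strictly earlier block row; telescoping the block rank numbers as in the proof of Proposition \ref{prop:zperm} turns the count of such obstructing $1$s above into $\bb(\br)_{i-1,n+1} - \bb(\br)_{i-1,j}$, and the count of $1$s in block $(i,j)$ is the displayed expression $\bb(\br)_{i,j}+\bb(\br)_{i-1,j-1}-\bb(\br)_{i,j-1}-\bb(\br)_{i-1,j}$ from Proposition \ref{prop:zperm}(1). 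Summing over all relevant $(i,j)$ gives the stated formula. I expect the indexing bookkeeping here — in particular checking that the upper limit $n+1$ and the ranges $i \ge 2$, $j \le 2n$ correctly exclude exactly the block rows/columns that contribute no inversions (the first block row and the block column containing the identity tails) — to be the main fiddly obstacle, though it is entirely routine once the block-rectangle picture from part (2) is in hand.
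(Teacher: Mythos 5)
Your treatment of part (1) is fine and is essentially the paper's: conditions (2)--(3) of Proposition \ref{prop:zperm} say exactly that the $1$s increase from northwest to southeast along every block row and every block column, which is left- and right-minimality, hence minimality in the $(W_P,W_P)$-double coset (the paper records (1) and (2) as immediate from the definition). For part (2), however, your key structural claim that ``the $1$s are packed into the northwest corner of each block'' is false: within a block the $1$s form a diagonal segment whose starting position is offset by the $1$s occupying earlier blocks of the same block row and block column (in Example \ref{ex:quivertoblockrank}, the unique $1$ of the block in block row $2$, block column $4$ sits in the second row and first column of that block; northwest packing would even be inconsistent with $v(\br)$ being a permutation). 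The conclusion you want --- that the diagram meets each block in a southeast-justified rectangle, so every essential box is a block's southeast corner --- is still correct, but it must be derived from the global northwest-to-southeast arrangement: a box lies in the diagram exactly when the $1$ of its row lies in a strictly later block column and the $1$ of its column lies in a strictly later block row, and the rows (resp.\ columns) with this property are precisely the bottom rows (resp.\ rightmost columns) of the relevant block row (resp.\ block column).

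Part (3) contains a step that fails as stated. The asserted identity ``the number of diagram boxes in block $(i,j)$ equals (number of $1$s in block $(i,j)$) times (number of $1$s strictly northeast)'' is false: in Example \ref{ex:quivertoblockrank} the block in block row $5$, block column $1$ contains one $1$ and has seven $1$s strictly to its northeast, yet contributes no diagram boxes. Moreover your description of the relevant $1$s (``in strictly higher block rows and the same block column''; ``the $1$ in its column lies in a strictly earlier block row'') points in the wrong directions. The displayed formula is not a block-by-block count of diagram boxes; it counts inversion pairs, which is how the paper argues: since the $1$s run northwest to southeast within every block row and block column, two $1$s form an inversion precisely when one lies in a block strictly northeast of the block of the other, so attributing each pair to the block of its southwestern member gives $l(v(\br))$ as the sum over blocks $(i,j)$ of (number of $1$s strictly northeast of block $(i,j)$) times (number of $1$s in block $(i,j)$); these are exactly the two factors in the display, by Proposition \ref{prop:zperm}(1) and telescoping of the block ranks. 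If you instead insist on counting diagram boxes block by block, the correct per-block count is (number of $1$s of block row $i$ lying in block columns $>j$) times (number of $1$s of block column $j$ lying in block rows $>i$), a different factorization that would still require a re-summation to match the stated formula.
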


\begin{proof}
Parts (1) and (2) follow immediately from the definition of $v(\br)$, since these happen precisely when the $1$s in the corresponding permutation matrix appear northwest to southeast in each block row and column.

For (3), observe that, given a permutation $v\in S_d$, the length of $v$ can be read off from the associated permutation matrix. The length of $v$ is the number of pairs of $1$s with the property that one of the $1$s appears northeast of the other. Thus, the length $l(v(\br))$ is
\[
%\begin{array}{ccc}
\displaystyle\sum_{i=2}^{2n+1}\displaystyle\sum_{j=1}^{2n}\left(\#1s\textrm{ strictly NE of block }(i, j)\right)(\#1s\textrm{ in block }(i,j)),
\]
which gives the stated formula.
\end{proof}
%%%%%%%%%%%%%%%%%%%%%%%%%%%%%%%%%%%%%%%%%%%%%%%%%%%%%%%%%%%%%%%%%%%%%%%%%%%%%%%%%%%%%%%%%%%%%%%%%%%%%%%
%%%%%%%%%%%%%%%%%%%%%%%%%%%%%%%%%%%%%%%%%%%%%%%%%%%%%%%%%%%%%%%%%%%%%%%%%%%%%%%%%%%%%%%%%%%%%%%%%%%%%%%

\subsection{Identifying orbit closures with Kazhdan-Lusztig varieties}\label{sect:OCtoKL}
%We continue with $Z$ denoting a generic matrix of $\mcell$, and recall the closed subscheme of $Y_{v(\br)} \subseteq \mcell$ given by northwest rank conditions, which is isomorphic to a Kazhdan-Lusztig variety $X_{v(\br)}\cap \cell$ as reviewed in Section \ref{sect:schubert}.
Recall from Section \ref{sect:schubert} that $v(\br)$ defines a closed subscheme of $Y_{v(\br)} \subseteq \mcell$, which is isomorphic to the Kazhdan-Lusztig variety $X_{v(\br)}\cap \cell$ in $P\backslash G$.

\begin{proposition}\label{prop:lemmaToMainThm}
For each quiver rank array $\br$, the Zelevinsky map restricts to an isomorphism from $\zO_\br$ to $Y_{v(\br)}$. Consequently, each $\zO_\br$ is reduced and irreducible.
\end{proposition}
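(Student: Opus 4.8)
The plan is to assemble the isomorphism $\zO_\br \cong Y_{v(\br)}$ as a composite of two identifications, both of which are essentially already in hand. The first is Proposition \ref{prop:quivertopositroid}, which tells us the Zelevinsky map gives a scheme-theoretic isomorphism $\zO_\br \cong NW_{\bb(\br)}$. So the real content is to prove that $NW_{\bb(\br)} = Y_{v(\br)}$ as subschemes of $\mcell$; that is, that the northwest block rank ideal $I_{\bb(\br)}$ coincides with the ideal $I_{v(\br)}$ defining the Kazhdan-Lusztig variety. Once this equality of ideals is established, the "consequently" clause is immediate: $Y_{v(\br)}$ is reduced and irreducible because it is a Kazhdan-Lusztig variety (Schubert varieties are reduced and irreducible, and intersecting with the opposite Schubert cell $\cell$, which is open, preserves these properties), and reducedness and irreducibility transfer across the isomorphism to $\zO_\br$.

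To prove $I_{\bb(\br)} = I_{v(\br)}$, I would compare the two generating sets of minors. Recall $I_{v(\br)}$ is generated by the size-$(1+\rank v(\br)_{p\times q})$ minors of $Z_{p\times q}$ as $(p,q)$ ranges over $\{1,\dots,d\}^2$, while $I_{\bb(\br)}$ is generated by the size-$(1+\bb(\br)_{i,j})$ minors of the block-northwest submatrices $Z_{i\times j}$. The first step is the numerical comparison: for a block corner $(p,q)$ sitting at the southeast of block $(i,j)$, one checks that $\rank v(\br)_{p\times q} = \bb(\br)_{i,j}$. This follows directly from Proposition \ref{prop:zperm}(1): summing the block-$(i',j')$ entry counts $\bb(\br)_{i',j'} + \bb(\br)_{i'-1,j'-1} - \bb(\br)_{i',j'-1} - \bb(\br)_{i'-1,j'}$ over all blocks $(i',j')$ weakly northwest of $(i,j)$ telescopes to $\bb(\br)_{i,j}$, and this sum is exactly the number of $1$s of $v(\br)$ in the block-northwest region, i.e. $\rank v(\br)_{p\times q}$. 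So at these particular $(p,q)$ the two ideals contribute the same minors. The second step is to show no other $(p,q)$ contribute anything new: by Fulton's essential set result quoted at the end of Section \ref{sect:schubert}, $I_{v(\br)}$ is generated by the minors indexed by $\mathcal{E}ss(v(\br))$, and Lemma \ref{cor:essBox}(2) says every essential box of $v(\br)$ sits in the southeast corner of a block. Hence all of Fulton's essential minors already appear among the block-corner minors, giving $I_{v(\br)} \subseteq I_{\bb(\br)}$. For the reverse containment, one must check that each block-corner minor of size $1+\bb(\br)_{i,j}$ lies in $I_{v(\br)}$; this is the standard fact that the rank conditions at \emph{all} northwest positions $(p,q)$ are consequences of those at the essential positions (Fulton, loc.\ cit.), applied at the block-corner positions where we have already matched the rank values.

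The main obstacle, I expect, is the careful bookkeeping in matching the block structure of $\mcell$ with the row/column indices $\{1,\dots,d\}$ used to define $v(\br)_{p\times q}$ and $Z_{p\times q}$: one must be sure the "southeast corner of block $(i,j)$" really is the position $(p,q) = (\sum_{i'\le i}h_{i'},\ \sum_{j'\le j}w_{j'})$ where $h_{i'},w_{j'}$ are the block heights and widths, and that the identity submatrices built into $\mcell$ (the blue entries of Example \ref{ex:quivertoblockrank}) interact correctly with the rank counts — in particular that $Z$ being of the form \eqref{eq:cell} already forces the "cell condition" ranks, so those minors are trivially zero on $\mcell$ and need not be separately accounted for. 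A clean way to organize this is to invoke Proposition \ref{prop:combDataEquiv} and the three-part analysis (cell / image / orbit conditions) referenced in the proof of Proposition \ref{prop:quivertopositroid}, noting that the Zelevinsky permutation was \emph{defined} precisely so that its northwest rank function reproduces $\bb(\br)$; with that dictionary in place the ideal equality $I_{v(\br)} = I_{\bb(\br)}$ reduces to Fulton's essential-set theorem plus Lemma \ref{cor:essBox}(2), and the proposition follows.
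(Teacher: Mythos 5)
Your proposal is correct and follows essentially the same route as the paper: reduce via Proposition \ref{prop:quivertopositroid} to the ideal equality $I_{\bb(\br)}=I_{v(\br)}$, get one inclusion from the fact that the block ranks of $v(\br)$ reproduce $\bb(\br)$ (which the paper asserts ``by construction'' and you verify by the telescoping sum), and get the other from Fulton's essential-set theorem combined with Lemma \ref{cor:essBox}(2), concluding reducedness and irreducibility from known properties of Kazhdan--Lusztig varieties. The only cosmetic difference is that your appeal to the essential-set fact in the containment $I_{\bb(\br)}\subseteq I_{v(\br)}$ is unnecessary, since the block-corner minors are already among the defining generators of $I_{v(\br)}$ once the rank values are matched.
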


\begin{proof}
By Proposition \ref{prop:quivertopositroid}, the Zelevinsky map restricts to an isomorphism from $\zO_{\br}$ to $NW_{\bb(\br)} := \Spec K[\mcell]/I_{\bb(\br)}$, the northwest block rank variety (see Definition \ref{def:NWBlockRankVariety}). So, to prove the proposition, it suffices to show that $I_{\bb(\br)}$ is equal to the ideal $I_{v(\br)}$ that scheme-theoretically defines $Y_{v(\br)}$. 

By construction, the rank of each northwest block submatrix $v(\br)_{i\times j }$ is equal to $\bb(\br)_{i,j}$. Thus, $I_{\bb(\br)}\subseteq I_{v(\br)}$. The reverse inclusion follows from the fact that the essential boxes of $v(\br)$ lie in the southeast corner of blocks (Lemma \ref{cor:essBox}). This guarantees that the only minors needed to generate $I_{v(\br)}$ come from \emph{block} rank conditions. Therefore, by \cite[\S3]{Fulton}, we have $I_{v(\br)}\subseteq I_{\bb(\br)}$.
%Thus, the collection of northwest ranks of $v(\br)$ is implied by the collection of northwest \emph{block} ranks, and $I_{v(\br)}\subseteq I_{\bb(\br)}$. 
So $\zeta$ restricts to a scheme-theoretic isomorphism 
\begin{equation}
\zO_\br \xrightarrow{\zeta} Y_{v(\br)} \simeq X_{v(\br)}\cap \cell,
\end{equation}
giving the last statement of the theorem by known properties of Kazhdan-Lusztig varieties.
\end{proof}

The following theorem is the main result of this section.

\begin{theorem}\label{thm:mainThmSection3}
\begin{enumerate}
\item The orbit closure $\overline{\mathcal{O}}_{\br}$ is scheme-theoretically isomorphic to $\zO_{\br}$, so the Zelevinsky map identifies orbit closures in $\rep_Q(\bd)$ with Kazhdan-Lusztig varieties in $P \backslash G$.
\item For two quiver rank arrays $\br$ and $\br'$, we have 
\[
\overline{\mathcal{O}}_{\br'}\subseteq \overline{\mathcal{O}}_{\br} \Longleftrightarrow \br' \leq \br \Longleftrightarrow v(\br') \geq v(\br).
\]
\item Let $v(\rep_Q(\bd))$ be the Zelevinsky permutation associated to the maximal quiver rank array (i.e., the dense orbit). Then, the poset of orbit closures in $\rep_Q(\bd)$, partially ordered by inclusion, is anti-isomorphic with the subposet of the symmetric group $S_d$, under Bruhat order, consisting of permutations $\pi$ satisfying both (i) $\bw\leq \pi\leq v(\rep_Q(\bd))$, and (ii) $\pi$ is a minimal length $(W_P,W_P)$-double coset representative in $S_d$.
\end{enumerate}
\end{theorem}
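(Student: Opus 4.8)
The plan is to reduce all three parts to Proposition~\ref{prop:lemmaToMainThm} together with elementary facts about $\GL(\bd)$-orbits and about Bruhat order. For part~(1) I would first note that $\mathcal{O}_{\br}\subseteq\zO_{\br}$, since on a representation with rank array $\br$ every size-$(1+\br_J)$ minor of $M_J$ vanishes. As the ranks of the $M_J$ are $\GL(\bd)$-invariant, $\zO_{\br}$ is $\GL(\bd)$-stable, hence a finite union of orbits, and by Proposition~\ref{prop:lemmaToMainThm} it is irreducible and reduced; an irreducible finite union of $\GL(\bd)$-orbits is the closure of a unique dense orbit $\mathcal{O}_{\br^{\ast}}$. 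Then $\br^{\ast}\le\br$ (because $\mathcal{O}_{\br^{\ast}}\subseteq\zO_{\br}$, and membership in $\zO_\br$ means exactly $\rank M_J\le\br_J$ for all $J$), while $\br\le\br^{\ast}$ (because $\mathcal{O}_{\br}\subseteq\overline{\mathcal{O}}_{\br^{\ast}}$ and each $r_J$ can only drop under specialization), so $\br^{\ast}=\br$; hence $\overline{\mathcal{O}}_{\br}=\zO_{\br}$ as sets, and since $\zO_{\br}$ is reduced this is a scheme isomorphism. Combined with Propositions~\ref{prop:quivertopositroid} and~\ref{prop:lemmaToMainThm}, this gives the identification with $X_{v(\br)}\cap\cell$.

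For part~(2), the first equivalence now follows from part~(1): $\br'\le\br$ iff $\mathcal{O}_{\br'}\subseteq\zO_{\br}$ (directly from the defining inequalities $\rank M_J\le\br_J$) iff $\overline{\mathcal{O}}_{\br'}\subseteq\overline{\mathcal{O}}_{\br}$ (since $\zO_{\br}=\overline{\mathcal{O}}_{\br}$ is closed). For the second equivalence, $\zeta$ is a closed immersion carrying $\overline{\mathcal{O}}_{\br}$ isomorphically onto $Y_{v(\br)}$, so $\overline{\mathcal{O}}_{\br'}\subseteq\overline{\mathcal{O}}_{\br}$ iff $Y_{v(\br')}\subseteq Y_{v(\br)}$; and since $Y_v$ is cut out of $\mcell$ by the conditions $\rank Z_{p\times q}\le\rank v_{p\times q}$, which hold with equality on a dense open subset (standard for these matrix Schubert/Kazhdan-Lusztig varieties, cf.~\cite{Fulton}), one gets $Y_{v'}\subseteq Y_v$ iff $\rank v'_{p\times q}\le\rank v_{p\times q}$ for all $p,q$, i.e.\ iff $v\le v'$ in Bruhat order. (Alternatively, combinatorially: $\br'\le\br$ iff $\bb(\br')\le\bb(\br)$ entrywise by the formulas behind Lemma~\ref{lem:blockRankConditions}, and Bruhat comparison of the $v(\br)$ reduces to their block-corner ranks by Lemma~\ref{lem:minLength}(2).)

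For part~(3), part~(2) makes $\overline{\mathcal{O}}_{\br}\mapsto v(\br)$ an order-reversing bijection from the poset of orbit closures onto $\mathcal{S}:=\{v(\br)\}$, so that poset is anti-isomorphic to $\mathcal{S}$ under Bruhat order, and it remains to identify $\mathcal{S}$ with the set of $\pi$ satisfying (i) and (ii). The inclusion $\mathcal{S}\subseteq\{\pi:\text{(i),(ii)}\}$ is easy: (ii) is Lemma~\ref{lem:minLength}(1), and for (i) the orbit-closure poset has top $\rep_Q(\bd)$ and bottom $\{0\}$, with $v(\{0\})=\bw$ because $\zeta(0)$ is literally the permutation matrix $\bw$, which one checks directly to satisfy conditions (1)--(3) of Proposition~\ref{prop:zperm} for $\bb(0)$, and with $v(\rep_Q(\bd))$ the Zelevinsky permutation of the dense orbit; order-reversal then places every $v(\br)$ between $\bw$ and $v(\rep_Q(\bd))$ in Bruhat order, which is~(i). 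For the reverse inclusion, given a minimal $(W_P,W_P)$-double coset representative $\pi$ satisfying~(i), I would recover $\pi$ from its block-rank matrix $\bb_{\pi}=(\rank\pi_{i\times j})$ exactly as in Proposition~\ref{prop:zperm}; using Lemma~\ref{lem:minLength}(2) to reduce Bruhat comparison to block corners, condition~(i) becomes precisely the ``cell conditions'' (from $\pi\le\bw$; Lemma~\ref{lem:cellranks}) and ``image conditions'' (from $\pi\ge v(\rep_Q(\bd))$; Lemma~\ref{lem:NWSE}) on $\bb_{\pi}$, while the remaining ``orbit condition'' entries (Lemma~\ref{lem:blockRankConditions}) define a function $\br$ on intervals whose rank-to-lace nonnegativity (the Corollary following Lemma~\ref{lem:rankToLace}) coincides with the evident nonnegativity of the block-multiplicity numbers $(\bb_{\pi})_{i,j}+(\bb_{\pi})_{i-1,j-1}-(\bb_{\pi})_{i,j-1}-(\bb_{\pi})_{i-1,j}$ of $\pi$. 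Hence $\br$ is a genuine quiver rank array with $\bb(\br)=\bb_{\pi}$, and uniqueness in Proposition~\ref{prop:zperm} forces $v(\br)=\pi$, so $\pi\in\mathcal{S}$.

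I expect the main obstacle to be this reverse inclusion in part~(3): matching the Bruhat-interval condition~(i) against the trichotomy of block-rank conditions (cell, image, orbit) from the Appendix, and in particular verifying that the rank-to-lace nonnegativity required for $\br$ to be a bona fide quiver rank array is automatic for a permutation of the prescribed block-staircase shape. Everything else is comparatively soft; in particular the scheme-theoretic (rather than merely set-theoretic) content of part~(1) comes for free once $\zO_{\br}$ is known to be reduced, which is exactly what Proposition~\ref{prop:lemmaToMainThm} provides.
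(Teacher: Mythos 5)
Your treatment of parts (1) and (2) and the forward inclusion of (3) is essentially the paper's argument: the paper also deduces (1) from reducedness and irreducibility of $\zO_{\br}$ (Proposition \ref{prop:lemmaToMainThm}), identifying $\mathcal{O}_{\br}$ as the dense orbit (you do this via semicontinuity of the $r_J$, the paper via a dimension count --- both fine), and it also gets the first equivalence of (2) from the set of orbits in $\zO_{\br}$ and the second from the identification with $Y_{v(\br)}$; your density-of-the-open-Schubert-stratum argument, or equivalently your parenthetical reduction to block-corner ranks via the essential-set lemma, supplies the Bruhat step the paper leaves terse. One remark you handle correctly but silently: as printed, condition (i) in the theorem has its inequalities in the wrong order relative to the paper's conventions (since $Y_\pi\neq\emptyset$ forces $\pi\leq\bw$, the interval is $v(\rep_Q(\bd))\leq\pi\leq\bw$); your reading, and the bound $v(\{0\})=\bw$, is exactly what the paper's own proof uses.

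Where you genuinely diverge is the reverse inclusion in (3). The paper argues geometrically: by (i), $Y_\pi$ is a nonempty subvariety of $\zeta(\rep_Q(\bd))$; by (ii), its essential boxes sit at block corners, so $Y_\pi$ is a northwest block rank variety; pulling back a (generic) point $M\in Y_\pi$ along $\zeta$ produces the quiver rank array $\br$, and Propositions \ref{prop:combDataEquiv} and \ref{prop:quivertopositroid} give $Y_\pi=NW_{\bb(\br)}=Y_{v(\br)}$, hence $\pi=v(\br)$. You instead reconstruct $\br$ purely combinatorially from $\bb_\pi=(\rank\pi_{i\times j})$ and certify it is a quiver rank array via the rank-to-lace criterion. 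This route is viable --- the cell and image equalities do follow from (i) at block corners exactly as you say, and the mixed second differences of $\bb_\pi$ are the block $1$-counts of $\pi$, hence nonnegative --- but as written it has one real hole: the Corollary following Lemma \ref{lem:rankToLace} only tests nonnegativity of the lace numbers $s_J$ for intervals with at least one arrow, and by itself does not guarantee realizability by a representation of dimension vector exactly $\bd$; you must also check that the multiplicities of the single-vertex indecomposables, i.e.\ $\bd(z)-\sum_{J\ni z}s_J$ at each vertex $z$, are nonnegative. This does follow in your framework, because the block row and column sums of the $d\times d$ permutation $\pi$ equal the prescribed block sizes (the same counting as in the proof of Proposition \ref{prop:zperm}), so the leftover counts are again $1$-counts in blocks of $\pi$; but that verification, together with the case-by-case matching of second differences of $\bb_\pi$ with the $s_J$ across the cell/image/orbit regions, is precisely the bookkeeping your sketch defers, and it is the part the paper's geometric pullback argument lets one avoid.
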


\begin{proof}
By definition, the $\GL(\bd)$-invariant variety $\zO_{\br}$ contains precisely those orbits $\mathcal{O}_{\br'}$ such that $\br' \leq \br$. Because each $\zO_{\br}$ is reduced (by Proposition \ref{prop:lemmaToMainThm}), we see that $\zO_{\br'}$ is a subscheme of $\zO_{\br}$ if and only if $\br' \leq \br$. Furthermore, $\mathcal{O}_{\br}$ is the only orbit in $\zO_{\br}$ that is not also contained in some lower-dimensional subvariety (namely some other $\zO_{\br'}$). Thus $\mathcal{O}_{\br}$ is dense in $\zO_{\br}$, and $\overline{\mathcal{O}_{\br}}\cong \zO_{\br}$ (again, since $\zO_{\br}$ is reduced). This proves (1) and the leftmost equivalence in (2). The rest of (2) follows from the Zelevinsky map identification of $\zO_{\br}$ with $Y_{v(\br)}$ in Proposition \ref{prop:lemmaToMainThm}. 

To prove (3), first notice that the poset of orbit closures can at least be identified with a subposet of those $\pi$ that satisfy both (i) and (ii). Indeed, each Zelevinsky permutation $v(\br)$ satisfies (ii) by definition, and satisfies condition (i) because $Y_{v(\br)}$ is a non-empty subvariety of $\zeta(\rep_Q(\bd))$ (by Proposition \ref{prop:lemmaToMainThm}). 
On the other hand, suppose that $\pi\in S_d$ satisfies both conditions (i) and (ii). By (i), $Y_{\pi}$ is a non-empty subvariety of $\zeta(\rep_Q(\bd))$ and, by (ii), all essential boxes of $\pi$ occur in the southeast corner of a block. Thus, $Y_{\pi}$ is the northwest block rank variety $NW_{\bb(\br)}$ for the quiver rank array $\br$ associated to $\zeta^{-1}(M)\in \rep_Q(\bd)$, for any $M\in Y_{\pi}$ (the choice of $M$ doesn't matter by Proposition \ref{prop:combDataEquiv}). Applying Proposition \ref{prop:quivertopositroid}, and item (1) then completes the proof. 
\end{proof}

This gives a formula for dimensions of an orbit closure in terms of its block rank matrix.
\begin{corollary}
The orbit closure $\overline{\mathcal{O}}_{\br}$ has dimension \[d_xd_y-\displaystyle\sum_{i=2}^{2n+1}\displaystyle\sum_{j=1}^{2n}(\bb(\br)_{i-1,n+1}-\bb(\br)_{i-1,j})(\bb(\br)_{i,j}+\bb(\br)_{i-1,j-1}-\bb(\br)_{i,j-1}-\bb(\br)_{i-1,j}).\] 
\end{corollary}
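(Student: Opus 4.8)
The plan is to assemble results that have already been established. By Proposition~\ref{prop:lemmaToMainThm}, the Zelevinsky map restricts to a scheme-theoretic isomorphism $\overline{\mathcal{O}}_\br \xrightarrow{\ \sim\ } Y_{v(\br)}$, where $v(\br)$ is the Zelevinsky permutation of Proposition~\ref{prop:zperm}. In particular $\dim \overline{\mathcal{O}}_\br = \dim Y_{v(\br)}$, so it suffices to compute the dimension of this Kazhdan--Lusztig variety.

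For that, I would appeal to the dimension formula recalled in Section~\ref{sect:schubert}: whenever $Y_v$ is non-empty, it has dimension $l(\bw) - l(v) = d_x d_y - l(v)$. The variety $Y_{v(\br)}$ is non-empty here, since it is the isomorphic image under $\zeta$ of the non-empty orbit closure $\overline{\mathcal{O}}_\br$. Hence $\dim \overline{\mathcal{O}}_\br = d_x d_y - l(v(\br))$.

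The last step is simply to substitute the closed-form expression for $l(v(\br))$ provided by Lemma~\ref{lem:length}(3),
\[
l(v(\br)) = \sum_{i=2}^{2n+1}\sum_{j=1}^{2n}\bigl(\bb(\br)_{i-1,n+1}-\bb(\br)_{i-1,j}\bigr)\bigl(\bb(\br)_{i,j}+\bb(\br)_{i-1,j-1}-\bb(\br)_{i,j-1}-\bb(\br)_{i-1,j}\bigr),
\]
which gives precisely the asserted formula. I do not anticipate any genuine obstacle: the substance lies in the earlier results --- the identification of $\overline{\mathcal{O}}_\br$ with $Y_{v(\br)}$, the general dimension formula for Kazhdan--Lusztig varieties, and the length computation of Lemma~\ref{lem:length}(3) --- and this corollary is only their combination. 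The single point deserving explicit mention is that the dimension formula $d_x d_y - l(v)$ requires $Y_{v(\br)} \ne \emptyset$, which is immediate from non-emptiness of $\overline{\mathcal{O}}_\br$.
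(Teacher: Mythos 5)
Your argument is correct and matches the paper's own proof: identify $\overline{\mathcal{O}}_{\br}$ with the Kazhdan--Lusztig variety via the Zelevinsky map, use the dimension formula $l(\bw)-l(v(\br)) = d_xd_y - l(v(\br))$, and substitute the length formula of Lemma~\ref{lem:length}. The only cosmetic point is that the isomorphism $\overline{\mathcal{O}}_{\br}\simeq Y_{v(\br)}$ formally combines Proposition~\ref{prop:lemmaToMainThm} with Theorem~\ref{thm:mainThmSection3}(1), but this does not affect the argument.
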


\begin{proof}
Because $\overline{\mathcal{O}}_{\br}$ is isomorphic to the Kazhdan-Lusztig variety $X_{v(\br)}\cap \cell$, it has dimension $l(\bw)-l(v(\br))$. Applying Lemma \ref{lem:length} yields the desired result.
\end{proof}

%%%%%%%%%%%%%%%%%%%%%%%%%%%%%%%%%%%%%%%%%%%%%%%%%%
%%%%%%%%%%%%%%%%%%%%%%%%%%%%%%%%%%%%%%%%%%%%%%%%%%

\section{Arbitrary orientations in type $A$}
Let $Q$ be a type $A$ quiver with arbitrary orientation. Choose one end of the quiver and go across labeling the vertices $z_0, z_1, \dotsc, z_n$ and the arrows $\zg_1, \zg_2, \dotsc, \zg_n$. We will define a bipartite, type $A$ quiver $\tq$ which has orbit closures with geometry closely connected to that of $Q$.  
The approach we take is that of ``shrinking bijective arrows", following Bongartz \cite[\S5.2]{MR1303228}.
We present a representative example before giving the precise definition.

\begin{example}
Let $Q$ be the quiver:
\begin{equation}\label{eq:arbitraryq}
\vcenter{\hbox{\begin{tikzpicture}[point/.style={shape=circle,fill=black,scale=.5pt,outer sep=3pt},>=latex]
   \node[outer sep=-2pt] (z0) at (0,2) {$z_0$};
   \node[outer sep=-2pt] (z1) at (1,1) {$z_1$};
  \node[outer sep=-2pt] (z2) at (2,0) {$z_2$};
   \node[outer sep=-2pt] (z3) at (3,1) {$z_3$};
  \node[outer sep=-2pt] (z4) at (4,2) {$z_4$};
  \path[->]
  	(z0) edge node[auto] {$\zg_1$} (z1) 
	(z1) edge node[auto] {$\zg_2$} (z2)
  	(z3) edge node[auto] {$\zg_3$} (z2) 
	(z4) edge node[auto] {$\zg_4$} (z3);
   \end{tikzpicture}}}
\end{equation}
Then $\tq$ will contain two new vertices $w_1, w_3$, and two new arrows $\zd_1, \zd_3$.
\begin{equation}\label{eq:qcover}
\vcenter{\hbox{\begin{tikzpicture}[point/.style={shape=circle,fill=black,scale=.5pt,outer sep=3pt},>=latex]
   \node[outer sep=-2pt] (z0) at (-1,1) {$z_0$};
   \node[outer sep=-2pt] (w1) at (0,0) {$\color{red}{w_1}$};
   \node[outer sep=-2pt] (z1) at (1,1) {$z_1$};
  \node[outer sep=-2pt] (z2) at (2,0) {$z_2$};
   \node[outer sep=-2pt] (w3) at (3,1) {$\color{red}{w_3}$};
  \node[outer sep=-2pt] (z3) at (4,0) {$z_3$};
  \node[outer sep=-2pt] (z4) at (5,1) {$z_4$};
  \path[->]
  	(z0) edge node[auto] {$\zg_1$} (w1) 
  	(z1) edge[thick,dashed,red] node[auto] {$\zd_1$} (w1) 
	(z1) edge node[auto] {$\zg_2$} (z2)
  	(w3) edge node[auto] {$\zg_3$} (z2) 
	(w3) edge[thick, dashed,red] node[auto] {$\zd_3$} (z3)
	(z4) edge node[auto] {$\zg_4$} (z3);
   \end{tikzpicture}}}. \qedhere
\end{equation}
\end{example}

In general, $\tq$ is defined by the following local insertions of vertices and arrows:  for each \emph{intermediate vertex} of the form $z_{i-1} \xrightarrow{\zg_{i}} z_i \xrightarrow{\zg_{i+1}}$, add an intermediate sink $w_i$ and arrow $\zd_i$ in the configuration
\begin{equation}\label{eq:sink}
\vcenter{\hbox{\begin{tikzpicture}[point/.style={shape=circle,fill=black,scale=.5pt,outer sep=3pt},>=latex]
   \node[outer sep=-2pt] (z0) at (-1,1) {$z_{i-1}$};
   \node[outer sep=-2pt] (w1) at (0,0) {$w_i$};
   \node[outer sep=-2pt] (z1) at (1,1) {$z_i$};
  \path[->]
  	(z0) edge node[below left] {$\zg_i$} (w1) 
  	(z1) edge node[auto] {$\zd_i$} (w1);
	   \end{tikzpicture}}}.
\end{equation}
For each {intermediate vertex} of the form $z_{i-1} \xleftarrow{\zg_{i}} z_i \xleftarrow{\zg_{i+1}}$, add an intermediate source $w_i$ and arrow $\zd_i$ in the configuration
\begin{equation}\label{eq:source}
\vcenter{\hbox{\begin{tikzpicture}[point/.style={shape=circle,fill=black,scale=.5pt,outer sep=3pt},>=latex]
   \node[outer sep=-2pt] (z0) at (-1,0) {$z_{i-1}$};
   \node[outer sep=-2pt] (w1) at (0,1) {$w_i$};
   \node[outer sep=-2pt] (z1) at (1,0) {$z_i$};
  \path[->]
  	(w1) edge node[above left] {$\zg_i$} (z0) 
  	(w1) edge node[auto] {$\zd_i$} (z1);
	   \end{tikzpicture}}}.
\end{equation}

For a dimension vector $\bd$ for $Q$, define $\td$ as the natural lifting \[\td(z_i) := \bd(z_i)\textrm{ and }\td(w_i) := \bd(z_i).\]
Let $G^*=\prod\GL_{\td(w_i)}(K)$ be the base change group at the added vertices, so that 
\begin{equation}\label{eq:GLdecomp}
\GL(\td) = G^* \times \GL(\bd) .
\end{equation}
Throughout this section, we denote a typical element of $\rep_{\tq}(\td)$ by $\tv=(V_{\zd_i}) \times( V_{\zg_i})$, and an element of $\GL(\td)$ by $\tilde{g}=(g_{w_i})\times (g_{z_i})$.

\begin{proposition}\label{prop:equivariantprojection}
Let $Q$ be a quiver of type $A$, and $\tq$ the associated bipartite quiver defined above.  Then there is a $\GL(\td)$-stable open set $U \subset \rep_{\tilde{Q}}(\tilde{\bd})$ and a morphism 
\begin{equation}
\pi \colon U \to \rep_Q(\bd)
\end{equation}
which is equivariant with respect to the natural projection of base change groups 
\[\GL(\td) \to \GL(\bd),\] and also a principal $G^*$-bundle.
\end{proposition}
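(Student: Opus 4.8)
The plan is to define $U$ as the locus where all the maps over the newly-added "bijective" arrows $\zd_i$ are isomorphisms, and to define $\pi$ by "composing through" these isomorphisms. First I would observe that since $\td(w_i) = \bd(z_i)$ for a sink-type insertion (and similarly $\td(z_i) = \bd(w_i)$-compatibly for a source-type insertion), the matrix $V_{\zd_i}$ over $\zd_i \colon z_i \to w_i$ is a square matrix, so it makes sense to ask that it be invertible. Set
\[
U := \{ \tv \in \rep_{\tq}(\td) \mid V_{\zd_i} \text{ is invertible for every added arrow } \zd_i \}.
\]
This is open (the non-vanishing locus of finitely many determinant polynomials) and it is $\GL(\td)$-stable, because the $\GL(\td)$-action multiplies $V_{\zd_i}$ on the left and right by invertible matrices, preserving invertibility. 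Then $\pi$ is defined on $U$ by keeping the maps over each $\zg$-arrow, but replacing a path of the form $z_{i-1}\xrightarrow{\zg_i} w_i \xleftarrow{\zd_i} z_i$ by the single arrow $z_{i-1}\to z_i$ (or $z_i \to z_{i-1}$) in $Q$ with matrix $V_{\zg_i} V_{\zd_i}^{-1}$ (resp. $V_{\zd_i}V_{\zg_i}^{-1}$, depending on orientation); arrows of $Q$ not adjacent to an intermediate vertex are simply transported unchanged. Concretely, $\pi(\tv)_{\zg}$ is either $V_\zg$ or a product $V_{\zg_i}V_{\zd_i}^{-1}$, which is a morphism of varieties on $U$ since matrix inversion is regular on the invertible locus.

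Next I would check equivariance. Given $\tilde g = (g_{w_i})\times(g_{z_i}) \in \GL(\td)$, acting on $\tv$ and then applying $\pi$ should agree with applying $\pi$ and then acting by the image $(g_{z_i}) \in \GL(\bd)$. This is a direct computation: in the sink case, $(\tilde g \cdot \tv)_{\zg_i} = g_{w_i} V_{\zg_i} g_{z_{i-1}}^{-1}$ and $(\tilde g\cdot\tv)_{\zd_i} = g_{w_i} V_{\zd_i} g_{z_i}^{-1}$, so the composite is
\[
(g_{w_i}V_{\zg_i}g_{z_{i-1}}^{-1})(g_{w_i}V_{\zd_i}g_{z_i}^{-1})^{-1} = g_{w_i}V_{\zg_i}g_{z_{i-1}}^{-1}\, g_{z_i}V_{\zd_i}^{-1}g_{w_i}^{-1},
\]
which is \emph{not} immediately $g_{z_i}(V_{\zg_i}V_{\zd_i}^{-1})g_{z_{i-1}}^{-1}$ — so I need to be careful about the direction of the insertion. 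The correct bookkeeping is that the composed arrow in $Q$ runs $z_{i-1}\to z_i$ only when the original $Q$ had $z_{i-1}\xrightarrow{\zg_i} z_i$, i.e., the path $z_{i-1}\to w_i$ composed with the \emph{inverse} of $z_i \to w_i$; matching heads and tails correctly makes the conjugating factors $g_{w_i}$ cancel and leaves exactly $g_{z_i}(\cdots)g_{z_{i-1}}^{-1}$ (or its transpose-direction analogue in the source case). I expect this to be the fiddliest part of the write-up: one must fix conventions for which endpoint of $\zd_i$ is which and verify the two orientation cases \eqref{eq:sink}, \eqref{eq:source} separately, but each is a one-line matrix identity once the conventions are pinned down.

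Finally I would show $\pi\colon U\to \rep_Q(\bd)$ is a principal $G^*$-bundle, where $G^* = \prod \GL_{\td(w_i)}(K)$ acts on $U$ via the inclusion $G^* \times \{1\} \hookrightarrow \GL(\td)$ of \eqref{eq:GLdecomp}. The key point is that $G^*$ acts \emph{simply transitively on the fibers} of $\pi$: given $V\in\rep_Q(\bd)$, a preimage amounts to choosing, for each intermediate vertex $i$, an invertible matrix $V_{\zd_i}$ (which then forces $V_{\zg_i}$ via the relation $\pi$-defining equation, e.g. $V_{\zg_i} = \pi(\tv)_{\zg_i} \cdot V_{\zd_i}$ in the sink case), and the $G^*$-action at $w_i$ by $g_{w_i}$ changes $V_{\zd_i}\mapsto g_{w_i}V_{\zd_i}$, visibly a torsor structure. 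To upgrade "simply transitive on fibers" to "locally trivial bundle," I would exhibit an explicit section over all of $\rep_Q(\bd)$: send $V$ to the lift with $V_{\zd_i} = \bid_{\td(w_i)}$ for all $i$ and $V_{\zg_i}$ the corresponding $\zg$-map of $V$ composed appropriately with the identity; this section together with the free transitive $G^*$-action gives a trivialization $U \cong \rep_Q(\bd)\times G^*$, so $\pi$ is in fact a \emph{trivial} $G^*$-bundle. (One should note the $G^*$-action is free because it is free on each $V_{\zd_i}$-coordinate, the matrices being invertible.) This also makes $\pi$ smooth and surjective with irreducible fibers isomorphic to $G^*$, which is all that is needed for the applications in Theorem \ref{thm:arbitraryorient}.
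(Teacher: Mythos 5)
Your proposal matches the paper's proof in all essentials: the same open locus $U$ where each $V_{\zd_i}$ is invertible, the same definition of $\pi$ by composing through $V_{\zd_i}^{-1}$, the same orientation-by-orientation equivariance check, and the same identification of fibers with $G^*$ (the paper likewise notes $U \simeq G^* \times \rep_Q(\bd)$ as a variety). Your only slip — writing the composite in the sink case as $V_{\zg_i}V_{\zd_i}^{-1}$ rather than $V_{\zd_i}^{-1}V_{\zg_i}$ — is one you catch yourself during the equivariance computation, and the corrected bookkeeping is exactly the paper's formula.
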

\begin{proof}
Let $U$ be the $\GL(\td)$-stable open set where the map over each $\zd_i$ is an isomorphism, so that $U \simeq G^* \times \rep_Q(\bd)$ as an algebraic variety.  Since the action of $\GL(\td)= G^* \times \GL(\bd)$ on $U$ is not just the factor-wise one, we have to incorporate a slight twist into $\pi$ to get equivariance.

For $\tv \in U$,  define matrices $X_{\zg_i} = V_{\zg_i}$ when $z_{i-1} \xrightarrow{\zg_i} z_i$ or $z_{i-1} \xleftarrow{\zg_{i}} z_{i}$.  We set $X_{\zg_i} = V_{\zd_i}^{-1} V_{\zg_i}$ or  $X_{\zg_i}=V_{\zg_i}V_{\zd_i}^{-1}$ when $\zg_i$ is involved in a local configuration of type \eqref{eq:sink} or \eqref{eq:source}, respectively.
Then define the projection map by
\begin{equation}
\pi \colon U \to \rep_Q(\bd), \qquad \tv \mapsto (X_{\zg_i}),
\end{equation}
which we will check is equivariant with respect to the natural projection $\GL(\td) \to \GL(\bd)$. 

Let $\tilde{g} \in \GL(\td)$ and $\tv \in U$.
For arrows in $\tq$ of the form $z_{i-1} \xrightarrow{\zg_{i}} z_{i}$ or $z_{i-1} \xleftarrow{\zg_{i}} z_{i}$ in $\tilde{Q}$, it is straightforward to see that the factor of $\tilde{g}\cdot \tv$ indexed by $\zg_i$ is either $g_{z_{i}}V_{\zg_i} g_{z_{i-1}}^{-1}$ or $g_{z_{i-1}}V_{\zg_i} g_{z_{i}}^{-1}$, which agrees with that factor in $(g_{z_i})\cdot \pi(\tv)$.  The remaining arrows are involved in a local configuration of type \eqref{eq:sink} or \eqref{eq:source}; we just write out the check for the first type because the second follows \emph{mutatis mutandis}.
Over $\tq$, the action of $\tilde{g}$ sends the pair $(V_{\zg_i}, V_{\zd_i})$ to $(g_{w_i} V_{\zg_i} g_{z_{i-1}}^{-1},\ g_{w_i} V_{\zd_i} g_{z_i}^{-1})$.  Then $\pi$ collapses this pair to $g_{z_i} V_{\zd_i}^{-1} V_{\zg_i} g_{z_{i-1}}^{-1}$ in the factor indexed by $\zg_i$, which agrees with that factor in $(g_{z_i}) \cdot \pi(\tv)$.  So $\pi$ is equivariant.

The equivariance of $\pi$ with respect to projection $G^* \times \GL(\bd) \to \GL(\bd)$ implies that the factor $G^*$ acts on fibers of $\pi$. So we just need to see that the action is free and transitive on fibers to conclude that we have a principal $G^*$-bundle.  But this is clear because each fiber of $\pi$ can be identified with $G^*$ with the action of the factor $G^* \times \{1\}\subset \GL(\td)$ by left multiplication.
\end{proof}

\begin{theorem}\label{thm:arbitraryorient}
The projection $\pi$ from Proposition \ref{prop:equivariantprojection} gives a bijection between orbits in $U$ and orbits in $\rep_Q(\bd)$; the same is true for orbit closures.  Consequently, each orbit closure $\overline{\mathcal{O}} \subseteq \rep_Q(\bd)$ for an arbitrary type $A$ quiver is isomorphic to an orbit closure of $\rep_{\tq}(\td)$ of a bipartite quiver, up to a smooth factor. Namely, we have
\begin{equation}
\overline{\pi^{-1}(\mathcal{O})} \simeq G^* \times \overline{\mathcal{O}}.
\end{equation}
\end{theorem}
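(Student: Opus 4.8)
The plan is to obtain this theorem as an essentially formal consequence of Proposition~\ref{prop:equivariantprojection}, which already carries all the geometric content: $\pi\colon U\to\rep_Q(\bd)$ is a principal $G^*$-bundle, equivariant for the projection $\GL(\td)=G^*\times\GL(\bd)\to\GL(\bd)$, and $G^*$ acts on the fibres of $\pi$ as the subgroup $G^*\times\{1\}\subseteq\GL(\td)$. The first step is to note that this bundle is trivial: the proof of Proposition~\ref{prop:equivariantprojection} produces a section $s$ of $\pi$ (send $V\in\rep_Q(\bd)$ to the point of $U$ whose maps over the $\zd_i$ are all identity matrices and whose $\pi$-image is $V$), so $(h,V)\mapsto h\cdot s(V)$ is an isomorphism $G^*\times\rep_Q(\bd)\to U$ that intertwines the second projection $\mathrm{pr}_2$ with $\pi$ and intertwines the translation action of $G^*$ on the first factor with the action of $G^*\times\{1\}$. (The full $\GL(\td)$-action is twisted and does not respect this product, but we do not need it to.)

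The second step is the orbit bijection. By equivariance $\pi$ maps each $\GL(\td)$-orbit onto a $\GL(\bd)$-orbit, and $\pi$ is surjective, so it suffices to show that $\pi^{-1}(\mathcal{O})$ is a single $\GL(\td)$-orbit for every orbit $\mathcal{O}\subseteq\rep_Q(\bd)$. Given $\tv_1,\tv_2\in\pi^{-1}(\mathcal{O})$, choose $g\in\GL(\bd)$ with $g\cdot\pi(\tv_1)=\pi(\tv_2)$ and lift it to $(1,g)\in\GL(\td)$; then $(1,g)\cdot\tv_1$ and $\tv_2$ lie in the same fibre of $\pi$, hence differ by an element of $G^*$ since $G^*$ acts transitively on fibres. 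Thus $\mathcal{O}\mapsto\pi^{-1}(\mathcal{O})$ is a bijection from the $\GL(\bd)$-orbits of $\rep_Q(\bd)$ onto the $\GL(\td)$-orbits of $U$, with inverse $\tilde{\mathcal{O}}\mapsto\pi(\tilde{\mathcal{O}})$.

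The third step is to pass to closures, taken inside $U$. Under the trivialization, $\pi^{-1}(\mathcal{O})=G^*\times\mathcal{O}\subseteq G^*\times\rep_Q(\bd)$, and since the closure of a product is the product of the closures we get $\overline{\pi^{-1}(\mathcal{O})}=G^*\times\overline{\mathcal{O}}$, which is the displayed isomorphism; as $G^*$ is a product of general linear groups, hence a smooth variety, this exhibits $\overline{\mathcal{O}}$ as a bipartite-quiver orbit closure up to a smooth factor. Finally $\overline{\mathcal{O}}_1\subseteq\overline{\mathcal{O}}_2$ if and only if $G^*\times\overline{\mathcal{O}}_1\subseteq G^*\times\overline{\mathcal{O}}_2$, so the orbit bijection restricts to a containment-preserving bijection of orbit closures, which completes the proof.

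I do not anticipate a substantive obstacle here: the theorem follows formally from Proposition~\ref{prop:equivariantprojection}. The main point requiring care is that all closures must be interpreted inside the open set $U$---the closure of $\pi^{-1}(\mathcal{O})$ in all of $\rep_{\tq}(\td)$ is in general strictly larger than $G^*\times\overline{\mathcal{O}}$. One should also track the fact that the $\GL(\td)$-action on $U$ is twisted relative to the product decomposition $U\cong G^*\times\rep_Q(\bd)$, so that it is precisely the subgroup $G^*\times\{1\}$ that acts fibrewise; this is exactly what makes $\pi^{-1}(\mathcal{O})=G^*\times\mathcal{O}$ a single $\GL(\td)$-orbit.
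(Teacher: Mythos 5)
Your proposal is correct and takes essentially the same route as the paper: the paper's proof likewise combines equivariance, transitivity of $G^*$ on the fibres of $\pi$, continuity for closures, and the decomposition $U\simeq G^*\times\rep_Q(\bd)$ coming from the definition of $\pi$ in Proposition \ref{prop:equivariantprojection}; you have simply made the trivializing section and the product-of-closures step explicit. Your caveat that $\overline{\pi^{-1}(\mathcal{O})}$ must be read as the closure inside $U$ matches how the paper uses the result later (each $\overline{\mathcal{O}}\times G^*$ is an \emph{open} subset of a bipartite orbit closure, hence of a Kazhdan--Lusztig variety).
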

\begin{proof}
Equivariance gives that orbits go to orbits, and transitivity of the $G^* \subset \GL(\td)$ action on fibers gives that there is only one $\GL(\td)$-orbit mapping to each orbit in $\rep_Q(\bd)$.  This extends by continuity to a bijection on orbit closures.  The definition of $\pi$ gives the decomposition in the last statement.
\end{proof}

%%%%%%%%%%%%%%%%%%%%%%%%%%%%%%%%%%%%%%%%%%%%%%
%%%%%%%%%%%%%%%%%%%%%%%%%%%%%%%%%%%%%%%%%%%%%%
\section{Consequences for the geometry of orbit closures of type $A$ quivers}

Let $Q$ be a quiver of type $A$ with arbitrary orientation and let $\bd$ be a dimension vector for $Q$. In this section, we use our previous work to recover the results of Bobi\'nski and Zwara that orbit closures in $\rep_Q(\bd)$ are normal, Cohen-Macaulay, and have rational singularities (see \cite[Thm.~1.1]{BZ-typeA}). In addition, we show that orbit closures in a fixed $\rep_Q(\bd)$ are all simultaneously compatibly Frobenius split.  

Let $\overline{\mathcal{O}}\subseteq\rep_Q(\bd)$ be an orbit closure. By Theorem \ref{thm:mainThmSection3} and Proposition \ref{thm:arbitraryorient}, there is a product of general linear groups $G^*$ with the property that $\overline{\mathcal{O}}\times G^*$ is isomorphic to an open subset of a Kazhdan-Lusztig variety. 

\begin{proposition}(compare with \cite[Thm.~1.1]{BZ-typeA})
Orbit closures in $\rep_Q(\bd)$ are normal and Cohen-Macaulay.
\end{proposition}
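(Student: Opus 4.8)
The plan is to reduce to the known normality and Cohen--Macaulayness of Schubert varieties. As recorded just above, Theorems \ref{thm:mainThmSection3} and \ref{thm:arbitraryorient} together produce a product of general linear groups $G^*$ and a Schubert variety $X_{v}$ in $P\backslash G$ such that $\overline{\mathcal{O}}\times G^*$ is isomorphic to an open subscheme of the Kazhdan--Lusztig variety $X_{v}\cap\cell$; and since $\cell = P\backslash P\bw B^{-}$ is open in $P\backslash G$, this Kazhdan--Lusztig variety is itself an open subscheme of $X_v$. So the first step is simply to invoke the classical fact that Schubert varieties in type $A$ partial flag varieties are normal and Cohen--Macaulay (by standard monomial theory, or by Frobenius splitting; see the references in \cite{KMS}). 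Normality and the Cohen--Macaulay property are local, hence inherited by open subschemes, so $\overline{\mathcal{O}}\times G^*$ is normal and Cohen--Macaulay.

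The second step is to transfer these properties across the first projection $p\colon \overline{\mathcal{O}}\times G^* \to \overline{\mathcal{O}}$. This $p$ is the base change of $G^*\to\Spec K$, hence flat and surjective, i.e.\ faithfully flat; it is in fact smooth, since $G^*$ is smooth over $K$. Both normality and the Cohen--Macaulay property descend along faithfully flat morphisms of Noetherian schemes: for any point $x'$ of $\overline{\mathcal{O}}\times G^*$ lying over $x\in\overline{\mathcal{O}}$, the induced local homomorphism $\mathcal{O}_{\overline{\mathcal{O}},x}\to\mathcal{O}_{\overline{\mathcal{O}}\times G^*,x'}$ is flat and local, and a Noetherian local ring is Cohen--Macaulay (resp.\ normal) whenever some faithfully flat local extension of it is. Running this over all $x$ shows that every local ring of $\overline{\mathcal{O}}$ is Cohen--Macaulay and normal, so $\overline{\mathcal{O}}$ itself is.

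There is no genuine obstacle here: all the real content is packaged into Theorem \ref{thm:mainThmSection3} and the classical geometry of Schubert varieties, and what remains is the routine descent above. The only subtlety worth flagging is that the descent should be carried out on local rings (equivalently, via faithful flatness of $p$), rather than by restricting along a section $\overline{\mathcal{O}}\hookrightarrow\overline{\mathcal{O}}\times G^*$: neither normality nor the Cohen--Macaulay property is inherited by an arbitrary closed subscheme, so a slice argument would not suffice.
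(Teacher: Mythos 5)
Your proposal is correct and follows essentially the same route as the paper: identify $\overline{\mathcal{O}}\times G^*$ with an open subscheme of a Kazhdan--Lusztig variety (hence of a Schubert variety), use that normality and Cohen--Macaulayness are local and hold there, and then descend along the faithfully flat projection to $\overline{\mathcal{O}}$, exactly as the paper does by citing flatness results from Matsumura. Your explicit remark that a slice/section argument would not suffice is a sensible clarification, but the substance of the argument is the same.
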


\begin{proof}
Let $\overline{\mathcal{O}}$ be an orbit closure. Because Kazhdan-Lusztig varieties are normal and Cohen-Macaulay (see \cite{Brion-Pos}), so is $\overline{\mathcal{O}}\times G^*$. Therefore, $\overline{\mathcal{O}}$ is also normal and Cohen-Macaulay (see, for example, \cite[\S23]{Matsumura} on flat morphisms).
\end{proof}

%Next, we prove that orbit closures of type $A$ quivers have rational resolutions, and thus have rational singularties in characteristic $0$.

%\begin{definition}(see, eg. \cite[3.4.1 Definition]{BK})
%A proper, birational morphism $f:Y\rightarrow X$ is called a \textbf{rational resolution} if $Y$ is non-singular and \[f_*\mathcal{O}_Y = \mathcal{O}_X,~~f_*\omega_Y = \omega_X,~~R^if_*\mathcal{O}_X = R^if_*\omega_X = 0, \forall i>0.\]
%\end{definition}

%Next we show that orbit closures of type $A$ quivers have rational singularities in characteristic 0. 
Recall that a variety $X$ defined over a field of characteristic $0$ has rational singularities if it is normal, and if there exists a non-singular variety $Y$ along with a proper, birational morphism $f:Y\rightarrow X$ satisfying
\begin{equation}\label{eq:ratsings}
R^if_*\mathcal{O}_Y = 0.
\end{equation}
Note that if one resolution of singularities of $X$ satisfies equation (\ref{eq:ratsings}), then all do. (See, for example, \cite[Page 50]{ToroidalEmbeddings} for further information.) 

\begin{proposition}(compare with \cite[Thm.~1.1]{BZ-typeA})
If the ground field $K$ has characteristic $0$, then orbit closures in $\rep_Q(\bd)$ have rational singularities.
\end{proposition}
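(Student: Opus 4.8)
The plan is to reduce the statement to the known fact that Schubert varieties in type $A$, and hence Kazhdan-Lusztig varieties as their open subsets, have rational singularities in characteristic $0$, and then to transfer this property from $\overline{\mathcal{O}}\times G^*$ down to $\overline{\mathcal{O}}$ along the smooth projection, exactly in the spirit of the preceding Cohen-Macaulay argument. First I would invoke the identification, supplied by Theorem \ref{thm:mainThmSection3} together with Theorem \ref{thm:arbitraryorient} (as recorded in the paragraph preceding the normality statement), of $\overline{\mathcal{O}}\times G^*$ with an open subvariety of a Kazhdan-Lusztig variety $X_v\cap\cell$. Since $X_v\cap\cell$ is open in the Schubert variety $X_v$, and Schubert varieties have rational singularities in characteristic $0$ (for instance via Kempf vanishing applied to a Bott-Samelson-Demazure resolution; compare the references for normality and Cohen-Macaulayness used above, e.g.\ \cite{Brion-Pos}), and since having rational singularities passes to open subsets, it follows at once that $\overline{\mathcal{O}}\times G^*$ has rational singularities.

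The remaining step is the descent from $\overline{\mathcal{O}}\times G^*$ to $\overline{\mathcal{O}}$ along the projection $p\colon\overline{\mathcal{O}}\times G^*\to\overline{\mathcal{O}}$, which is smooth and surjective because $G^*$ is a product of general linear groups. Here I would argue: $\overline{\mathcal{O}}$ is already known to be normal; choose a resolution of singularities $\rho\colon Z\to\overline{\mathcal{O}}$, which exists since we are in characteristic $0$; because $G^*$ is smooth, $Z\times G^*$ is smooth and $\rho\times\mathrm{id}_{G^*}$ is a proper birational morphism, i.e.\ a resolution of $\overline{\mathcal{O}}\times G^*$; the previous paragraph then gives $R^i(\rho\times\mathrm{id}_{G^*})_*\mathcal{O}_{Z\times G^*}=0$ for all $i>0$; and flat base change along the cartesian square relating $p$ to its pullback, together with faithful flatness of $p$, forces $R^i\rho_*\mathcal{O}_Z=0$ for all $i>0$. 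Normality of $\overline{\mathcal{O}}$ together with this vanishing is precisely the definition of rational singularities (cf.\ \eqref{eq:ratsings}), and by the remark following \eqref{eq:ratsings} the choice of resolution does not matter.

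I do not anticipate a real obstacle; the one point deserving care is the descent in the second paragraph, namely the assertion that rational singularities can be checked smooth-locally on the target. This is standard — it is essentially flat base change for the higher direct images of the structure sheaf of a resolution, paralleling the appeal to \cite[\S23]{Matsumura} in the Cohen-Macaulay proof — but it should be either spelled out or cited cleanly rather than merely asserted. An equivalent and perhaps slicker route would be to cite directly the fact that for a smooth surjective morphism $Y\to X$ of varieties in characteristic $0$, $X$ has rational singularities if and only if $Y$ does, and apply it to $p$.
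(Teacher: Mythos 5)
Your proposal is correct, and its overall skeleton matches the paper's proof: identify $\overline{\mathcal{O}}\times G^*$ with an open subset of a Kazhdan--Lusztig variety (which has rational singularities by \cite{Brion-Pos}), take a resolution $\rho\colon Z\to\overline{\mathcal{O}}$, observe that $\rho\times\mathrm{id}_{G^*}$ is a resolution of $\overline{\mathcal{O}}\times G^*$, invoke independence of the choice of resolution to get $R^i(\rho\times\mathrm{id}_{G^*})_*\mathcal{O}_{Z\times G^*}=0$, and then descend this vanishing to $R^i\rho_*\mathcal{O}_Z=0$. The only genuine divergence is in that last descent step. The paper exploits affineness: since $\overline{\mathcal{O}}$ and $\overline{\mathcal{O}}\times G^*$ are affine, the vanishing of higher direct images is equivalent to the vanishing of $H^i(Z,\mathcal{O}_Z)$ and of $H^i(Z\times G^*,\mathcal{O}_{Z\times G^*})$, and the K\"unneth formula together with $H^i(G^*,\mathcal{O}_{G^*})=0$ for $i>0$ extracts $H^i(Z,\mathcal{O}_Z)=0$. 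You instead use flat base change along the cartesian square over the projection $p\colon\overline{\mathcal{O}}\times G^*\to\overline{\mathcal{O}}$, giving $p^*R^i\rho_*\mathcal{O}_Z\cong R^i(\rho\times\mathrm{id}_{G^*})_*\mathcal{O}_{Z\times G^*}=0$, and faithful flatness of $p$ to conclude. Both arguments are valid; yours is slightly more general in that it never uses affineness of the spaces involved (and amounts to the standard fact that rational singularities can be checked after a smooth surjective base change), while the paper's K\"unneth computation is more elementary and self-contained given that everything in sight is affine. As you note yourself, if you take the base-change route you should either spell out the cartesian square and faithful flatness or cite the smooth-descent statement cleanly; with that done, your argument is complete.
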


\begin{proof}
Let $X \subseteq\rep_Q(\bd)$ be an orbit closure and let $f:Y\rightarrow X$ be a resolution of singularities. Since $X$ is normal, we need only show that 
\[
R^if_*\mathcal{O}_Y = 0,\quad \forall i>0.
\]
Because $X$ is affine, this is equivalent to showing that $H^i(Y,\mathcal{O}_Y) = 0$ for all $i>0$ (see \cite[III.8.5]{Hartshorne}).

Now, let $G^*$ be a product of general linear groups, chosen as in Proposition \ref{thm:arbitraryorient}, so that $X\times G^*$ is isomorphic to an open subset of a Kazhdan-Lusztig variety. Then $f\times Id: Y\times G^*\rightarrow X\times G^*$ is a resolution of singularities. 
Because $X\times G^*$ is isomorphic to an open subset of a Kazhdan-Lusztig variety, $X\times G^*$ has rational singularities (see \cite{Brion-Pos}). Therefore, since $X\times G^*$ is affine, we have
\[H^i(Y\times G^*,\mathcal{O}_Y\boxtimes\mathcal{O}_{G^*}) = 0,\quad \forall i>0.\] 
Applying the K\"unneth formula for sheaf cohomology (see \cite[Proposition 9.2.4]{Kempf}), we see that 
\begin{equation}\label{eq:kun}
\bigoplus_{i_1+i_2= i}H^{i_1}(Y,\mathcal{O}_Y)\otimes H^{i_2}(G^*,\mathcal{O}_{G^*}) = 0,\quad  \forall i>0.
\end{equation}
Since $G^*$ is affine, we have that $H^i(G^*,\mathcal{O}_{G^*}) = 0$ for all $i>0$, and so (\ref{eq:kun}) becomes \[H^i(Y,\mathcal{O}_Y)\otimes H^0(G^*,\mathcal{O}_{G^*}) = 0,\quad \forall i>0,\] which ensures that $H^i(Y,\mathcal{O}_Y) = 0$ for all $i>0$. 
\end{proof}

For the remainder of the section, suppose that $K$ is a perfect field of characteristic $p>0$. We now show that orbit closures in a fixed $\rep_Q(\bd)$ are all simultaneously compatibly Frobenius split.

Recall that a $K$-algebra $R$ (or, equivalently, $\Spec R$) is \textbf{Frobenius split} if there exists an additive map $\phi:R\rightarrow R$, satisfying both $\phi(a^pb) = a\phi(b)$, $\forall a,b\in R$, and $\phi(1) = 1$. An ideal $I\subseteq R$ is \textbf{compatibly split} by $\phi: R\rightarrow R$ if $\phi(I)\subseteq I$. Notice that if $I$ is compatibly split, then $\phi:R\rightarrow R$ descends to a Frobenius splitting of $R/I$. These definitions sheafify, and we may talk about Frobenius split schemes, and their compatibly split subschemes. See \cite[Chapter 1]{BK} for the basics for Frobenius splitting.  

\begin{proposition}\label{Fsplitting}
If the ground field $K$ is perfect of characteristic $p>0$, then there exists a Frobenius splitting $\phi:\rep_Q(\bd)\rightarrow \rep_Q(\bd)$ that simultaneously compatibly splits all orbit closures.
\end{proposition}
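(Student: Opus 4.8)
The plan is to obtain the splitting from the canonical Frobenius splitting of the ambient partial flag variety and transport it, via the Zelevinsky map and — for arbitrary orientation — via the principal bundle of Proposition~\ref{prop:equivariantprojection}. The key input is the classical fact (see \cite{BK}) that $P\backslash G$ carries a Frobenius splitting compatible with every Schubert variety $X_v$ and every opposite Schubert variety $X^v$ ($v\in W^P$); since a scheme-theoretic intersection of compatibly split subschemes is compatibly split, this splitting is also compatible with each $X_v\cap X^w$. I would also use two standard manipulations (see \cite[Ch.~1]{BK}): a Frobenius splitting restricts to any open subscheme, carrying along its compatibly split closed subschemes, and a splitting compatible with a closed subscheme $Z$ descends to a splitting of $Z$ compatible with any of its further compatibly split closed subschemes.

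First I would treat the case that $Q$ itself is bipartite. Restrict the canonical splitting of $P\backslash G$ to the opposite Schubert variety $X^{\bw}$ (legitimate because $X^{\bw}$ is compatibly split), and then to the open subscheme $\cell\subseteq X^{\bw}$; this yields a Frobenius splitting $\tau$ of $\cell$ compatible with $Y_v=X_v\cap\cell$ for every $v\in W^P$. Identify $\cell$ with $\mcell$ via the isomorphism of Section~\ref{sect:schubert}. By Theorem~\ref{thm:mainThmSection3} the Zelevinsky map $\zeta$ identifies $\rep_Q(\bd)$ — the closure of the dense orbit — with $Y_{v(\rep_Q(\bd))}$, and identifies each orbit closure $\overline{\mathcal{O}}_{\br}$ with the closed subvariety $Y_{v(\br)}\subseteq Y_{v(\rep_Q(\bd))}$. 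Since $\tau$ is compatible with $Y_{v(\rep_Q(\bd))}$ it descends to a Frobenius splitting of $\rep_Q(\bd)$, and since $\tau$ is compatible with each $Y_{v(\br)}$, that descended splitting is compatible with every orbit closure.

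Next I would pass to the general case using Proposition~\ref{prop:equivariantprojection}: let $\tq,\td,U,G^*$ and $\pi\colon U\to\rep_Q(\bd)$ be as there. By the bipartite case, $\rep_{\tq}(\td)$ has a Frobenius splitting $\psi_0$ compatible with all of its orbit closures; restrict $\psi_0$ to the principal open (hence affine) subscheme $U$ to get a splitting $\psi$ of $U$. For an orbit closure $\overline{\mathcal{O}}\subseteq\rep_Q(\bd)$, Theorem~\ref{thm:arbitraryorient} gives that $\pi^{-1}(\overline{\mathcal{O}})=G^*\times\overline{\mathcal{O}}$ is the intersection of $U$ with the orbit closure of $\rep_{\tq}(\td)$ through $\pi^{-1}(\mathcal{O})$, so $\psi$ is compatible with $\pi^{-1}(\overline{\mathcal{O}})$. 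In the identification $U\cong G^*\times\rep_Q(\bd)$ the morphism $\pi$ is the second projection, so the identity element of $G^*$ gives a section $\sigma\colon\rep_Q(\bd)\to U$ with $\pi\circ\sigma=\mathrm{id}$. Set $\phi:=\sigma^{\#}\circ\psi\circ\pi^{\#}\colon K[\rep_Q(\bd)]\to K[\rep_Q(\bd)]$. Using $\sigma^{\#}\circ\pi^{\#}=\mathrm{id}$ together with the splitting property of $\psi$, one checks directly that $\phi$ is additive, $\phi(1)=1$, and $\phi(a^pb)=a\phi(b)$, so $\phi$ is a Frobenius splitting of $\rep_Q(\bd)$; and since $\sigma^{-1}(G^*\times\overline{\mathcal{O}})=\overline{\mathcal{O}}$, it follows that $\phi(I)\subseteq I$ for the ideal $I$ of each orbit closure. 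This is the desired splitting, and the bipartite case is recovered by taking $G^*$ trivial.

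Most of this is bookkeeping — propagating one splitting down the chain $P\backslash G\supseteq X^{\bw}\supseteq\cell\cong\mcell\supseteq\zeta(\rep_Q(\bd))$ while tracking which closed subschemes remain compatibly split. The step that needs an idea rather than a citation is the descent along the $G^*$-bundle: one cannot average a splitting over $G^*$ in characteristic $p$, and the substitute is to restrict along the identity section as above — equivalently, a splitting of $\rep_Q(\bd)\times G^*$ compatible with all $\overline{\mathcal{O}}\times G^*$ produces, via the section, a splitting of $\rep_Q(\bd)$ compatible with all $\overline{\mathcal{O}}$. I expect making that descent precise, and correctly quoting the form of the canonical splitting for a partial (rather than full) flag variety, to be the two points requiring the most care.
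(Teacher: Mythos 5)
Your proposal is correct and follows essentially the same route as the paper: the bipartite case comes from restricting the canonical splitting of $P\backslash G$ through $X^{\bw}$ to the cell $\cell\cong\mcell$ and transporting it via Theorem \ref{thm:mainThmSection3}, and the general case descends along the $G^*$-bundle by pulling back along $\pi$ and restricting along a section. Your composite $\sigma^{\#}\circ\psi\circ\pi^{\#}$ is exactly the paper's composition $\pi\circ\phi\circ i$ (with the fixed point $g_0\in G^*$ playing the role of your identity section), so the "idea" you single out is precisely the one the authors use.
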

  
We thank Karen E. Smith for showing us how to go from the bipartite case to the general case in the proof that follows.

\begin{proof}
There is a Frobenius splitting of $P\backslash G$ which compatibly splits all Richardson varieties\footnote{In fact, by \cite{MR2588137} or \cite{KLS-Richardson}, there is a splitting of $P\backslash G$ for which the collection of compatibly split subvarieties is the set of projected Richardson varieties (i.e. the set of images of Richardson varieties in $B\backslash G$ under the projection $\pi:B\backslash G \rightarrow P\backslash G$).} (see \cite[Chapter 2]{BK}, also \cite{KLS-Richardson}). In particular, the opposite Schubert variety $X^{\bw}$, for $\bw$ as in (\ref{eq:permCell}), has an induced Frobenius splitting, and so the opposite Schubert cell $\cell$ does as well (since it is an open subvariety of $X^{\bw}$; see \cite[Lemma 1.1.7]{BK}). Notice that the Kazhdan-Lusztig varieties of the form $X_{v}\cap\cell$, $v\in W^P$ and $l(v)<l(\bw)$, are a subset of all compatibly split subvarieties of $\cell$. Applying Theorem \ref{thm:mainThmSection3} then yields the desired result in the bipartite type $A$ setting.

Next suppose that $Q$ is a type $A$ quiver with arbitrary orientation. Fix a dimension vector $\bd$ and let $G^*$ be the product of general linear groups as in Proposition \ref{thm:arbitraryorient}. Since each $\overline{\mathcal{O}}\times G^*$ is isomorphic to an open subset of a Kazhdan-Lusztig variety of the form $X_v\cap \cell$, there is a Frobenius splitting \[\phi: K[\rep_Q(\bd)]\otimes_K K[G^*]\rightarrow K[\rep_Q(\bd)]\otimes_K K[G^*]\] for which all $\overline{\mathcal{O}}\times G^*$ (among other subvarieties) are compatibly split. We have: 
\begin{equation}
\vcenter{\hbox{\begin{tikzpicture}
\node (ul) at (0,2) {$K[\rep_Q(\bd)]$};
\node (ur) at (5,2) {$K[\rep_Q(\bd)]\otimes_K K[G^*]$};
\node (ll) at (0,0) {$K[\rep_Q(\bd)]$};
\node (lr) at (5,0) {$K[\rep_Q(\bd)]\otimes_K K[G^*]$};
\path[right hook-latex]
(ul) edge node[above] {$i$} (ur);
\path[-latex]
(lr) edge node[above] {$\pi$} (ll);
\path[-latex]
(ur) edge node[right] {$\phi$} (lr);
\end{tikzpicture}}}
\end{equation}
where $i$ denotes the map $r\mapsto r\otimes 1_{K[G^*]}$, and $\pi$ denotes the map $r\otimes s\mapsto s(g_0)r$, for a fixed $g_0 \in G^*$. An easy check shows that the composition of the three maps in the diagram is a Frobenius splitting of $\rep_Q(\bd)$, and that this composition restricts to a Frobenius splitting of each orbit closure $\overline{\mathcal{O}}$ (since $\phi$ restricts to a Frobenius splitting of each $\overline{\mathcal{O}}\times G^*$).  
\end{proof}

\begin{remark}
The actual chronology of this work is in some sense the opposite of the final presentation: it began with computing examples of Frobenius splittings of quiver loci, which revealed the form of our bipartite Zelevinsky map.
\end{remark}

\appendix
\section{Converting quiver ranks to northwest ranks}\label{appendix}
We retain the notation of the main body of the article, with one exception: for matrices in $\mcell$, we label the block rows and columns by vertices of $Q_0$ in the natural way suggested by the positions of the identity matrices.  Namely, the columns are labeled $x_n, \dotsc, x_1, y_0, y_1, \dotsc, y_n$ from left to right, while the rows are labeled $y_0,  \dotsc, y_n, x_n, \dotsc, x_1$ from top to bottom.
In this section, for two vertices $v, v'$ of $Q$, we denote by $Z_{v\times v'}$ the northwest justified submatrix of $Z$ whose southeast corner is the block in block row $v$ and block columns $v'$.

\begin{lemma}[Cell conditions]\label{lem:cellranks}
For any $Z \in \mcell$, the following northwest  block submatrices automatically have maximal rank:
\begin{itemize}
\item for pairs $0\leq i \leq j \leq n$, $\rank Z_{x_i \times x_j} 
%= \rank Z_{i\times (n+j)} 
= \sum_{k=0}^{i}\bd(y_k)$;%NE
\item for pairs $1\leq i \leq j \leq n$, $\rank Z_{y_i \times y_j} 
%= \rank Z_{(2n+2-i)\times (n+1-j)}
 = \sum_{k=j}^n\bd(x_k)$;%SW
\item for $0\leq j\leq n$, $\rank Z_{x_1 \times y_j} 
%= \rank Z_{(2n+1)\times n+j} 
= d_x+\sum_{k=0}^{j}\bd(y_k)$;%S
\item for $1\leq i\leq n$, $\rank Z_{x_i \times y_n} 
%= \rank Z_{(2n+2-i)\times 2n} 
= d_y+\sum_{k=j}^n\bd(x_k)$.%E
\end{itemize}
\end{lemma}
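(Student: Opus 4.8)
The plan is to read off the ranks directly from the block structure of the matrix \eqref{eq:cell}, exploiting that every nonzero block below and to the right of the main antidiagonal is an identity matrix. Recall that an element $Z \in \mcell$ has the form
\[
Z = \begin{pmatrix} * & \bid_{d_y} \\ \bid_{d_x} & 0 \end{pmatrix},
\]
where the rows are labeled (from top to bottom) by $y_0, \dots, y_n, x_n, \dots, x_1$ and the columns (from left to right) by $x_n, \dots, x_1, y_0, \dots, y_n$; the upper-left $d_y \times d_x$ block holding the $*$ entries is itself further subdivided, but its precise contents are irrelevant here. The key observation is that each of the four families of submatrices $Z_{x_i \times x_j}$, $Z_{y_i \times y_j}$, $Z_{x_1 \times y_j}$, $Z_{x_i \times y_n}$ either (a) consists entirely of the $*$-block rows sitting on top of identity blocks, or (b) contains a full set of identity blocks plus possibly some $*$-block rows, in such a way that the rank is forced to be the full number of rows or the full number of columns.

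First I would handle $\rank Z_{x_i \times x_j}$ for $0 \le i \le j \le n$. The northwest submatrix with southeast corner in block row $x_i$, block column $x_j$ contains all the $*$-block rows $y_0, \dots, y_n$ together with block rows $x_n, \dots, x_i$, and columns $x_n, \dots, x_j$. Among the $x$-labeled rows, the blocks $\bid_{\bd(x_n)}, \dots, \bid_{\bd(x_j)}$ appear, contributing $\sum_{k=j}^n \bd(x_k)$ independent rows; but one must instead count via columns: the columns present are $x_n, \dots, x_j$, and in the rows $y_0, \dots, y_n$ every entry in these columns lies in the $*$-block, while in rows $x_n, \dots, x_i$ the identity blocks $\bid_{\bd(x_n)}, \dots, \bid_{\bd(x_{\max(i,j)})}$ appear. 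A cleaner route: delete the $*$-rows; what remains in $Z_{x_i \times x_j}$ is a staircase of identity blocks whose rank is $\min$ of the obvious row/column counts, and since $i \le j$ this staircase already has rank equal to the number of columns below, forcing $\rank Z_{x_i \times x_j}$ to equal the number of rows, namely $\sum_{k=0}^{n}\bd(y_k)$ from the $y$-rows plus $\sum_{k=i}^n \bd(x_k)$ — wait: I should recompute against the claimed value $\sum_{k=0}^i \bd(y_k)$. In fact the correct bookkeeping is that the submatrix has $d_y + \sum_{k=i}^{n}\bd(x_k)$ rows but far fewer columns $\sum_{k=j}^n \bd(x_k) \le \sum_{k=i}^n\bd(x_k)$, so the rank is at most the column count; and since $i \le j$ one checks the column count is $\sum_{k=j}^n\bd(x_k)$. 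I would therefore recheck the statement's arithmetic and, assuming the labeling convention of the appendix, verify each of the four formulas by the same recipe: isolate the visible identity blocks, bound the rank above by $\min(\#\text{rows},\#\text{cols})$, and produce that many independent rows/columns explicitly from the identity blocks (the $*$-entries never decrease rank, and here never increase it beyond the stated bound).

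For the third and fourth families, $Z_{x_1 \times y_j}$ and $Z_{x_i \times y_n}$, the submatrix now reaches far enough east (resp. south) to include some of the $\bid_{\bd(y_k)}$ blocks from the right half of $Z$; here the rank is the full row count, because the $x$-block identities span all $x$-columns and the $y$-block identities span the included $y$-columns, so $Z_{x_1\times y_j}$ has rank $d_x + \sum_{k=0}^j \bd(y_k)$ exactly as claimed, and similarly for $Z_{x_i \times y_n}$. The main obstacle — really the only subtlety — is keeping the two indexing conventions straight (the block numbering $1,\dots,2n+1$ of Section \ref{sect:qrcToNW} versus the vertex labeling of this appendix) and making sure the off-by-one boundary cases ($i=0$, $j=n$, $i=1$) are correctly absorbed; once the conventions are pinned down, each formula is a one-line count of identity blocks. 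I would write out the $Z_{x_i \times x_j}$ case in full detail and remark that the remaining three are "completely analogous," since they follow from the same observation that $*$-entries sit only in rows/columns already accounted for by identity blocks elsewhere in the submatrix.
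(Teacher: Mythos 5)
This is correct and essentially the paper's own proof, which simply says the claim is clear from inspecting Figure \ref{fig:bigzimage}: in each case the visible identity blocks realize the bound $\min(\#\text{rows},\#\text{cols})$, so the $*$-entries are irrelevant (note only that for $Z_{x_1\times y_j}$ the maximal rank is the column count, not the row count, though the value you give is the right one). The mismatch you flagged is a typo in the statement rather than an error in your computation: the subscripts in the first two bullets are interchanged and the fourth sum should run from $k=i$, so the intended values are $\rank Z_{y_i\times y_j}=\sum_{k=0}^{i}\bd(y_k)$, $\rank Z_{x_i\times x_j}=\sum_{k=j}^{n}\bd(x_k)$, and $\rank Z_{x_i\times y_n}=d_y+\sum_{k=i}^{n}\bd(x_k)$, exactly as you computed.
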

\begin{proof}
This is clear from inspecting Figure \ref{fig:bigzimage}.
\end{proof}

\begin{lemma}[Image conditions]\label{lem:NWSE}
A matrix $Z\in \mcell$ is in the image of $\zeta$ if and only if both of the following conditions hold:
\begin{itemize}
\item[(NW)] for $0\leq i\leq n-2$ and $i+2\leq j\leq n$, %for $p=\sum_{k=0}^i \bd(y_k)$ and $q=\sum_{k=i+2}^n \bd(x_k)$ as $i$ ranges from 0 to $n-2$,
\[\rank Z_{{y_i}\times {x_j}}= 0;\] %(zeros above ladder)
\item[(SE)] for $2\leq i\leq n$ and $i-1\leq j\leq n-1$, %$p=d_y + \sum_{k=i+2}^n \bd(x_k)$ and $q=d_x + \sum_{k=0}^{i+1} \bd(y_k)$ as $i$ ranges from 0 to $n-2$, %(zeros below ladder)
\[\rank Z_{{x_i}\times {y_j}}=  \sum_{k=i}^n \bd(x_k)+ \sum_{k=0}^{j} \bd(y_k).\]
\end{itemize}
\end{lemma}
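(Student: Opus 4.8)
The plan is to express everything in terms of the single free block of a matrix in $\mcell$. As Figure~\ref{fig:bigzimage} and the cell conditions (Lemma~\ref{lem:cellranks}) make clear, a matrix $Z\in\mcell$ is completely determined by its northwest $d_y\times d_x$ corner, which I will call $N$; every other block of $Z$ is one of the fixed identity or zero blocks visible in that figure. Partition $N$ into blocks $N_{y_a\times x_b}$ with $0\le a\le n$ and $1\le b\le n$, in the vertex labelling of this appendix. By the definition of $\zeta$ in \eqref{eq:zelmap}, a matrix $Z\in\mcell$ lies in $\im\zeta$ if and only if $N$ has the ``snake'' shape of $M_Q$, meaning $N_{y_a\times x_b}=0$ for every $(a,b)$ with $b\notin\{a,a+1\}$, while the blocks $N_{y_{i-1}\times x_i}$ and $N_{y_i\times x_i}$ (the slots of $A_i$ and $B_i$) are unconstrained; given such an $N$ one recovers $V$ by reading off $V_{\za_i}$ and $V_{\zb_i}$ from these blocks, and $\zeta$ is visibly injective. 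So the content of the lemma is that, inside $\mcell$, conditions (NW) and (SE) cut out exactly this snake-shaped locus.

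First I would dispose of (NW). For $0\le i\le n-2$ and $i+2\le j\le n$, the northwest submatrix $Z_{y_i\times x_j}$ is nothing but the array of blocks $N_{y_a\times x_b}$ with $0\le a\le i$ and $j\le b\le n$, so $\rank Z_{y_i\times x_j}=0$ if and only if all of these blocks of $N$ vanish. A short bookkeeping check shows that, as $(i,j)$ ranges over the allowed pairs, the union of these vanishing conditions is exactly $\{\,N_{y_a\times x_b}=0 : b\ge a+2\,\}$, i.e.\ the blocks of $N$ in the ``northwest'' zero region of the snake.

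The (SE) step is the heart of the matter. Fix $2\le i\le n$ and $i-1\le j\le n-1$. The northwest submatrix $Z_{x_i\times y_j}$ contains the identity blocks $\bid_{\bd(x_n)},\dots,\bid_{\bd(x_i)}$ in block columns $x_n,\dots,x_i$, the identity blocks $\bid_{\bd(y_0)},\dots,\bid_{\bd(y_j)}$ in block columns $y_0,\dots,y_j$, the full $y$-rows of $N$, and a zero block in its southeast. Using the first batch of identities to clear the $N$-blocks in columns $x_n,\dots,x_i$, and then using column operations with the second batch to clear the non-identity entries in block rows $y_0,\dots,y_j$ --- these two reductions do not interfere, because of the placement of the zero blocks --- displays $Z_{x_i\times y_j}$ as a direct sum of those identity blocks with the residual array of blocks $N_{y_a\times x_b}$ for $j<a\le n$ and $1\le b<i$. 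Hence
\[
\rank Z_{x_i\times y_j}=\sum_{k=i}^{n}\bd(x_k)+\sum_{k=0}^{j}\bd(y_k)+\rank\bigl(N_{y_a\times x_b}\bigr)_{\,j<a,\ b<i},
\]
and since the identity blocks already force the rank to be at least the displayed constant (cf.\ Lemma~\ref{lem:cellranks}), condition (SE) is equivalent to the vanishing of all $N_{y_a\times x_b}$ with $a>j$ and $b<i$. Taking the union over the prescribed $(i,j)$ gives exactly $\{\,N_{y_a\times x_b}=0 : b\le a-1\,\}$, the ``southeast'' zero region of the snake.

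Combining: (NW) and (SE) together force $N_{y_a\times x_b}=0$ for every $(a,b)$ with $b\notin\{a,a+1\}$, which is precisely the condition $Z\in\im\zeta$; conversely, if $Z=\zeta(V)$ then $N=M_Q(V)$ is snake-shaped, and a quick check (using $j\ge i-1$) that the residual sub-array above is then empty shows the two rank formulas return exactly the values in (NW) and (SE). The main obstacle I anticipate is the reduction in the (SE) step --- keeping careful track of which identity blocks clear which rows and columns and checking that the surviving residual is precisely the claimed sub-array of $N$ --- together with the elementary but fiddly verification that the unions of the index ranges in (NW) and (SE) reassemble into the whole ``off-snake'' collection of blocks.
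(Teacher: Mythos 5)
Your proposal is correct and follows essentially the same route as the paper's proof: identify $\im\zeta$ with the snake-shaped locus of the free $d_y\times d_x$ block, read (NW) directly as the vanishing of the blocks strictly above the snake, and for (SE) clear rows and columns using the identity blocks to show the rank equals the stated constant exactly when the residual southeast blocks vanish. Your write-up just makes explicit the residual-rank formula and the index bookkeeping that the paper leaves to inspection of its figures.
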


\begin{proof}
By definition, $Z$ is in the image of  $\zeta$ if and only if the submatrix $Z_{{y_n}\times {x_1}}$ has the ``snake'' form $M_Q(V)$ seen in Figure \ref{fig:bigzimage}. % We claim that this is equivalent to $M$ satisfying conditions (NW) and (SE).
Condition (NW) obviously corresponds to the zeros in the northwest of $M_Q(V)$.  To get the zero entries in the southeast, consider a northwest justified submatrix $Z_{{x_i}\times {y_j}}$ from condition (SE), as seen in Figure \ref{fig:SE}.
\begin{figure}
\[
Z_{{x_i}\times {y_j}}=
\begin{pmatrix}
\begin{tikzpicture}[every node/.style={minimum width=1.5em}]
\matrix (m) [matrix of math nodes,nodes in empty cells]
{
& & & &\bid_{\bd(y_0)} &  & \\
& & & & & \ddots & \\
& & & & &  &\bid_{\bd(y_{j})} \\
%\phantom{\bid_{\bd(x_{i+2})}} & \phantom{\cdots} & \phantom{\bid_{\bd(x_{i+2})}} & \clubsuit \\
& & & \clubsuit \\
\bid_{\bd(x_n)} &  & \\
 & \ddots & \\
&  &\bid_{\bd(x_{i})} \\
};
\node[scale=4] (star) at (m-5-2 |- m-2-5) {*};
\node[scale=3] (zero) at (m-6-2 -| m-2-6)  {$0$};
\draw[thick] (m-5-1.north west) -- (m-5-1.north west -| m-3-7.south east);
\draw[thick] (m-1-5.north west) -- (m-1-5.north west |- m-7-3.south east);
\draw[dotted,thick] (m-7-3.south east) -- (m-4-4.north west);%dotted instead of red
\draw[dotted,thick] (m-4-4.north west) -- (m-3-7.south east);%dotted instead of red
\end{tikzpicture}
\end{pmatrix} .
\]
\caption{Condition (SE)}\label{fig:SE}
\end{figure}
By clearing rows and columns, we see that this matrix has rank precisely $\sum_{k=i}^n \bd(x_k)+ \sum_{k=0}^{j} \bd(y_k)$ if and only if all entries is the region marked $\clubsuit$ are zero.  By varying $i$ between $2$ and $n$ and $j$ between $i-1$ and $n-1$, we get all of the blocks of zeros in the lower part of $M_Q(V)$.
\end{proof}

Finally, given that a matrix satisfies the cell and image conditions of the previous two lemmas, we record how $\zeta$ translates quiver rank conditions to northwest block rank conditions.

\begin{lemma}[Orbit conditions]\label{lem:blockRankConditions}
A representation $V\in \rep_Q(\bd)$ satisfies $\br$ if and only if $\zeta(V)$ satisfies the conditions:
\begin{itemize}
\item[(I1)] for $2\leq i\leq n$ and $1\leq j\leq i-1$,
\[
\rank \zeta(V)_{{x_i}\times {x_j}} =  \br_{[\alpha_j, \beta_{i-1}]} + \sum_{k=i}^{n} \bd(x_{k}),
\] 
and for $1\leq j\leq n$, 
\[\rank \zeta(V)_{{y_n}\times {x_j}} = \br_{[\alpha_j,\beta_{n}]};\]
\item[(I2)] for $2\leq i\leq n$ and $0\leq j\leq i-2$,%$1 \leq i \leq n$ and $n+1-i\leq j\leq n$,
\[
\rank \zeta(V)_{{x_i}\times {y_j}} =  \br_{[\beta_{j+1}, \beta_{i-1}]} + \sum_{k=i}^{n} \bd(x_{k}) + \sum_{k=0}^j \bd(y_k),
\] 
and for $0\leq j\leq n-1$, \[\rank \zeta(V)_{{y_n}\times {x_j}} = \br_{[\beta_{j+1},\beta_n]}+\sum_{k=0}^j\bd(y_k); \]
\item[(I3)] for $0\leq i\leq n-1$ and $1\leq j\leq i+1$,
\[
\rank \zeta(V)_{{y_i}\times {x_j}} =  \br_{[\alpha_j, \alpha_{i+1}]};
\] 
\item[(I4)] for $1\leq i\leq n-1$ and $0\leq j\leq i-1$,
\[
\rank \zeta(V)_{{y_i}\times {y_j}} =  \br_{[\beta_{j+1}, \alpha_{i+1}]}+ \sum_{k=0}^j \bd(y_k).
\] 
\end{itemize}
\end{lemma}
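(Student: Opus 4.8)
The plan is to prove Lemma \ref{lem:blockRankConditions} by directly computing the rank of each northwest block submatrix $\zeta(V)_{v\times v'}$ appearing in conditions (I1)--(I4), using the block structure of the matrix $\zeta(V)$ displayed in Figure \ref{fig:bigzimage}. The key observation is that $\zeta(V)$ contains many identity blocks (the $\bid_{\bd(x_k)}$ running up the southwest and the $\bid_{\bd(y_k)}$ running across the northeast), so that any northwest submatrix decomposes, after row and column operations, into a ``free'' part contributed by these identity blocks plus a residual part which is exactly a submatrix of the snake matrix $M_Q(V)$. First I would fix a block position, say $(x_i, x_j)$ with $2\le i$, $1\le j\le i-1$, and draw the corresponding northwest submatrix; I would then identify which identity blocks $\bid_{\bd(x_k)}$ are fully contained in it (these are $k=i,\dots,n$, contributing $\sum_{k=i}^n\bd(x_k)$ to the rank and, crucially, letting us clear the columns they occupy), and observe that after this clearing the remaining nonzero entries form precisely the matrix $M_J(V)$ for the interval $J=[\alpha_j,\beta_{i-1}]$. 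This gives $\rank \zeta(V)_{x_i\times x_j} = \sum_{k=i}^n\bd(x_k) + r_J(V)$, which is the claimed formula once one recalls $r_J(V)=\br_J$ for $V$ satisfying $\br$.

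The second step is to carry out the analogous bookkeeping for the other three families. For (I2), the submatrix $\zeta(V)_{x_i\times y_j}$ picks up the full identity blocks $\bid_{\bd(x_k)}$ for $k\ge i$ \emph{and} the full identity blocks $\bid_{\bd(y_k)}$ for $k\le j$ (these are in the northeast portion, since $j\le i-2$ keeps us clear of the diagonal band), so the ``free'' contribution is $\sum_{k=i}^n\bd(x_k)+\sum_{k=0}^j\bd(y_k)$ and the residual snake submatrix is $M_{[\beta_{j+1},\beta_{i-1}]}(V)$. For (I3), the submatrix $\zeta(V)_{y_i\times x_j}$ with $j\le i+1$ lies entirely in the upper-left (snake) region and misses all identity blocks, so its rank is exactly $r_{[\alpha_j,\alpha_{i+1}]}(V)$ with no additive constant. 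For (I4), $\zeta(V)_{y_i\times y_j}$ with $j\le i-1$ picks up only the $\bid_{\bd(y_k)}$ for $k\le j$, giving the constant $\sum_{k=0}^j\bd(y_k)$ plus the snake submatrix $M_{[\beta_{j+1},\alpha_{i+1}]}(V)$. The special cases with block row $y_n$ in (I1) and (I2) are handled the same way, noting that row $y_n$ is the last snake row so no $x$-identity blocks lie strictly below it within the relevant submatrix. I would also check that, as the indices $(i,j)$ range over the stated bounds, the intervals $J$ produced are exactly \emph{all} intervals of $Q$ containing at least one arrow (each once), so that knowing all these block ranks is equivalent to knowing the full quiver rank array $\br$; this is the content of Proposition \ref{prop:combDataEquiv} and is what makes the ``if and only if'' valid.

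The main obstacle I anticipate is not any single rank computation — each is elementary Gaussian elimination — but rather the careful combinatorial alignment: verifying that in each of the four cases the residual matrix left after clearing identity blocks is genuinely $M_J(V)$ for the \emph{claimed} interval $J$, with the correct leftmost and rightmost arrows, and that the index ranges in (I1)--(I4) exhaust the intervals without overlap or omission. The off-by-one issues in the endpoints (e.g.\ why the $x_i$-row produces an interval ending at $\beta_{i-1}$ or $\alpha_{i+1}$ depending on whether the \emph{column} index is of $x$- or $y$-type) require drawing the pictures precisely, as in Figure \ref{fig:SE}, and tracking which block of $M_Q$ sits in the southeast corner of the submatrix in question. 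Once the diagrammatic set-up is in place, the clearing of identities and the identification with $M_J(V)$ follow exactly the linear algebra already used in the proof of Lemma \ref{lem:NWSE}, so I would present one representative case in full (say (I2)) and indicate that the remaining three, together with the boundary cases, follow \emph{mutatis mutandis}.
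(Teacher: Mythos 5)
Your proposal is correct and follows essentially the same route as the paper's proof: identify which identity blocks of $\zeta(V)$ lie inside the given northwest block submatrix, use them to clear rows and columns so that the residual is exactly $M_J(V)$ for the matching interval $J$, and add the sizes of those identity blocks to obtain the stated constants, with the index ranges matching the intervals of $Q$. The paper likewise works out one representative family in detail (with a figure) and notes that the other cases, including the $y_n$-row boundary cases, follow by the same linear algebra, so no substantive difference remains.
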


\begin{proof}
Recall that ``$V$ satisfies $\br$'' means that $\rank M_J(V) = \br_J$ for all intervals $J \subseteq Q$.  There are four types of intervals, depending on the type of the first and last arrow, $\za$ or $\zb$.  In each case, we need to show that $\rank M_J(V) = \br_J$ if and only if the corresponding rank condition of type (I1) through (I4) on $\zeta(V)$ holds.  Up to a shift, these correspond to which of the four quadrants the southeast corner of a northwest block matrix lies in.
%So, we need to show that equation (\ref{eq:eqnForProof}) holds for all intervals of type $[\alpha_i,\beta_j]$ (resp. $[\beta_i,\beta_j]$, $[\alpha_i,\alpha_j]$, $[\beta_i,\alpha_j]$) if and only if $\zeta(V)$ satisfies conditions (I1) (resp. (I2), (I3), and (I4)). 

First consider an interval of the form $J=[\za_i,\za_j]$. In this case, $M_J(V)$ is already identical to the northwest submatrix of $\zeta(V)_{y_i \times x_j}$, up to some extra rows and columns of zeros.  So it is clear that the ranks agree.  The same is true for intervals of the form $[\za_i, \zb_n]$.

Now consider an interval of the form $J=[\za_j,\zb_{i-1}]$, where $2 \leq i \leq n$.  To get the rank of $M_J(V)$ from $\zeta(V)$, we must take the northwest submatrix  $\zeta(V)_{x_i \times x_j}$ that includes some identity blocks from the southwest. When computing ranks, these identity blocks clear the columns above them, and add a constant to the rank of the submatrix $M_J(V)$ involved in the definition of quiver rank array.
 
As a concrete example, consider the matrix in Figure \ref{fig:I}. The dashed line outlines the northwest block matrix $\zeta(V)_{{x_{n-1}}\times {x_2}}$, and for any $V$ the rank of this submatrix is
\[ \rank M_{[\alpha_2,\beta_{n-2}]}(V)+\bd(x_n)+\bd(x_{n-1}).\] 

\begin{figure}
\begin{equation}%\label{eqn:ZelMat}
%Z_{p\times q} =
\begin{pmatrix}
\begin{tikzpicture}[every node/.style={minimum width=1.5em}]
\matrix (m0) [matrix of math nodes,nodes in empty cells]
{
& & & V_{\za_1}& \bid_{\bd(y_0)} &  & \\
& &  V_{\za_2} & V_{\zb_1}& & \bid_{\bd(y_1)} & \\
%\phantom{X} & \\
& \Ddots & \Ddots\\
V_{\za_n} & V_{\zb_{n-1}} &\\
V_{\zb_n} & & && &\phantom{X}&\phantom{X}&\bid_{\bd(y_{n})} \\
\bid_{\bd(x_n)} &  & \\
& \bid_{\bd(x_{n-1})} &  & \\
\phantom{X} &  & \\
\phantom{X} &  & \\
& & &\bid_{\bd(x_{1})} \\
};
\node[scale=3] (zero) at (m0-8-2 -| m0-2-6.south east)  {$0$};
\draw[thick] (m0-6-1.north west) -- (m0-6-1.north west -| m0-5-8.south east);%across
\draw[thick] (m0-1-5.north west) -- (m0-1-5.north west |- m0-10-4.south east);%down
\draw[thick,loosely dotted] (m0-7-2) -- (m0-10-4);
\draw[thick,loosely dotted] (m0-2-6) -- (m0-5-8);
\draw[dashed,thick] (m0-1-4.north west) -- (m0-1-4.north west |- m0-7-4.south west);
\draw[dashed,thick] (m0-7-1.south west) -- (m0-1-4.north west |- m0-7-4.south west);
\end{tikzpicture}
\end{pmatrix}
\end{equation}
\caption{Condition (I1)}\label{fig:I}
\end{figure}
The correspondence for other types of intervals can be verified in the same way.
\end{proof}

These three lemmas show that the collection of all northwest block rank conditions is equivalent to the cell, image, and orbit conditions. 

\bibliographystyle{alpha}
\bibliography{bipartite}

\comment{
\zeta(V)= \begin{pmatrix}
\begin{tikzpicture}[every node/.style={minimum width=2em,minimum height=1em}]
\matrix (m) [matrix of math nodes, nodes in empty cells]
{
& & & V_{\za_1} &\bid_{\bd(y_0)} &  & \\
& & V_{\za_2} & V_{\zb_1}  && \ddots & \\
& \Ddots&  \Ddots & \\
V_{\za_n} & V_{\zb_{n-1}} \\
V_{\zb_n}&& && &&&\bid_{\bd(y_n)} \\\\
\bid_{\bd(x_n)} &  & \\
 &  & \\
 &&\\
&  & &\bid_{\bd(x_1)} \\
};
%\node[scale=3] (zero) at (m0-5-2 -| m0-2-5)  {$0$};
\draw (m-5-1.south west) -- (m-5-8.south east);
%\draw (m0-1-4.north west) -- (m0-6-3.south east);
\end{tikzpicture}
\begin{equation}
\tilde{Z} =
\begin{pmatrix}
\begin{tikzpicture}[every node/.style={minimum width=1.5em}]
\matrix (m0) [matrix of math nodes,nodes in empty cells]
{
& & & A_1& \bid_{\bd(y_0)} &  & \\
& &  A_2 & B_1& & \bid_{\bd(y_1)} & \\
%\phantom{X} & \\
& \Ddots & \Ddots\\
A_n & B_{n-1} &\\
B_n & & &&\phantom{X} &\phantom{X}&&\bid_{\bd(y_{n})} \\
\bid_{\bd(x_n)} &  & \\
& \bid_{\bd(x_{n-1})} &  & \\
\phantom{X} &  & \\
\phantom{X} &  & \\
& & &\bid_{\bd(x_{1})} \\
};
\node[scale=3] (zero) at (m0-8-2 -| m0-2-6.south east)  {$0$};
\draw[thick] (m0-6-1.north west) -- (m0-6-1.north west -| m0-5-8.south east);%across
\draw[thick] (m0-1-5.north west) -- (m0-1-5.north west |- m0-10-4.south east);%down
\draw[thick,loosely dotted] (m0-7-2) -- (m0-10-4);
\draw[thick,loosely dotted] (m0-2-6) -- (m0-5-8);
%\draw[dashed,thick] (m0-1-4.north west) -- (m0-1-4.north west |- m0-7-4.south west);
%\draw[dashed,thick] (m0-7-1.south west) -- (m0-1-4.north west |- m0-7-4.south west);
\end{tikzpicture}
\end{pmatrix}
\end{equation}

}

\end{document}